\titleformat{\section}{\normalfont\normalfont\bfseries}{\thesection}{1em}{}
\titleformat{\subsection}{\normalfont\normalfont\bfseries}{\thesubsection}{1em}{}
\titleformat{\subsubsection}{\normalfont\normalfont\bfseries}{\thesubsubsection}{1em}{}
\titleformat{\paragraph}{\normalfont\normalfont\bfseries}{\thesection}{1em}{}
\theoremstyle{definition}
\newtheorem{definition}{Definition}
\newtheorem{remark}{Remark}
\newtheorem{convention}{Convention}
\newtheorem{claim}{Claim}
\theoremstyle{plain}
\newtheorem{theorem}{Theorem}
\newtheorem{lemma}{Lemma}
\newtheorem{corollary}{Corollary}
\newtheorem{conjecture}{Conjecture}
\newcommand{\defw}[1]{{\bf{\textit{#1}}}}
\newcommand{\ordth}{\textsuperscript{th}\,}
\newcommand{\pieq}{\stackrel{\pi}{=}}
\newcommand{\ZZ}{{\mathbb{Z}}}
\newcommand{\id}{\textnormal{id}}
\newcommand{\cobIV}{\mathcal{C}ob^4}
\newcommand{\chcobIII}{\mathcal{C}hron\mathcal{C}ob^3}
\newcommand{\cobIIIgen}{\mathcal{C}hron\mathcal{C}ob^3_{gen}}
\newcommand{\mat}[1]{\textnormal{Mat}(#1)}
\newcommand{\kom}[1]{\textnormal{Kom}(#1)}
\newcommand{\komb}[1]{\textnormal{Kom}^b(#1)}
\newcommand{\kombpmp}[1]{\textnormal{Kom}^b_{\pm,\pi}(#1)}
\newcommand{\includeMov}[1]{\adjustbox{valign = c}{\includegraphics[scale=0.0375]{Figures/#1.pdf}}}
\newcommand{\includeFig}[1]{\adjustbox{valign = c}{\includegraphics[scale=1.0]{Figures/#1.pdf}}}
\newcommand{\includeCob}[1]{\adjustbox{valign = c}{\includegraphics[scale=1.25]{Figures/#1.pdf}}}
\newcommand{\includeSym}[1]{\adjustbox{valign = c}{\includegraphics[scale=1.0]{FigureAssembly/Symbols/Symbol-#1/symbol.pdf}}}
\newcommand{\includeTang}[1]{\adjustbox{valign = c}{\includegraphics[scale=0.1]{Figures/#1.pdf}}}
\newcolumntype{C}[1]{>{\centering\let\newline\\\arraybackslash\hspace{0pt}}m{#1}}
\title{Functoriality of Odd and Generalized Khovanov Homology in $\mathbb{R}^3\times I$}
\author{Jacob Migdail}
\address{Washington and Lee University; Mathematics Department; Chavis Hall; Lexington, VA 24450}
\email{jmigdail-smith@wlu.edu}
\author{Stephan Wehrli}
\thanks{SW was partially supported by
a grant from the Simons Foundation (\#632059 Stephan
Wehrli).}
\address{Syracuse University; Department of Mathematics; 215 Carnegie; Syracuse, NY 13244}
\email{smwehrli@syr.edu}
\date{\today}
\begin{document}

\begin{abstract} 
We extend the generalized Khovanov bracket to smooth link cobordisms in \mbox{$\mathbb{R}^3\times I$} and prove that the resulting theory is functorial up to global invertible scalars. The generalized Khovanov bracket can be specialized to both even and odd Khovanov homology.  Particularly by setting $\pi=-1$, we obtain that odd Khovanov homology is functorial up to sign. We end by showing that odd Khovanov homology is not functorial under smooth link cobordisms in $S^3\times I$.
\end{abstract}

\maketitle

\section{Introduction}\label{s:intro}

In his seminal paper \cite{Kh1999}, Khovanov categorified the Jones polynomial \cite{Jo1985}
by introducing a powerful new link invariant now known as Khovanov homology.
Shortly after, Jacobsson \cite{Ja2004} proved Khovanov's 
conjecture that Khovanov homology is functorial 
under smooth link cobordisms up to sign.
Bar-Natan built upon this in  \cite{Bn2005} by providing an alternative formulation
of Khovanov homology---in terms of
an abstract cobordism category---which allowed him to give
a shorter and more conceptual proof of Jacobsson's result.
Khovanov homology was later truly realized as a functor
by multiple authors
 \cite{Ca2007,CMW2009,Bl2010,BHPW2019,Vo2020,Sa2021},
 who addressed the sign indeterminacy in the definition of the Khovanov cobordism maps
 in various ways.

Over the past two decades, the functoriality properties of Khovanov and Lee homology \cite{Le2005}
have been used in many of the main applications of these
theories. In particular, these applications include Rasmussen's combinatorial proof of the topological Milnor conjecture \cite{Ra2004},
Piccirillo's proof that the Conway knot is not smoothly slice \cite{Pi2020}, and the recent work of Hayden and Sundberg \cite{HS2022}, which shows that Khovanov homology can distinguish smooth surfaces in the 4-ball that are
topologically---but not smoothly---ambient isotopic.
In addition, Morrison-Walker-Wedrich used the full functoriality
of Khovanov homology (and more generally Khovanov-Rozansky $sl(N)$ homology \cite{KR2008, EK2010})
to define the Lasagna skein modules for smooth 4-manifolds \cite{MWW2022}.

Parallel to these developments, Ozsv\'ath-Rasmussen-Szab\'o \cite{ORS2007} found an alternative categorification of the Jones polynomial,
which they called odd Khovanov homology, and which 
is conjectured to provide the $E_2$ page of a spectral sequence abutting to
Heegaard Floer homology  \cite{OS2004}.
While this odd Khovanov homology theory agrees with the original ``even'' theory when
coefficients are taken in $\mathbb{Z}_2$, computations by Shumakovitch \cite{Sh2011} showed that---over rational coefficients---each
theory can distinguish links that the other cannot.

A geometric framework for odd Khovanov homology, in the spirit of \cite{Bn2005},
was developed by Putyra in  \cite{Pu2015}, who defined a generalized Khovanov bracket
that specializes to both even and odd Khovanov homology
(see \cite{BW2010} for a related construction).
Putyra also conjectured that his generalized Khovanov bracket extends to a functor
on smooth link cobordisms.
In the present paper, we prove this conjecture:

\begin{theorem}\label{thm:main} The generalized Khovanov bracket is functorial under smooth
link cobordims up to homotopy and overall invertible scalars.\end{theorem}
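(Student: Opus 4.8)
The plan is to adapt the movie-move strategy of Bar-Natan \cite{Bn2005}, in the refined forms developed by subsequent authors \cite{Ca2007,CMW2009,Bl2010}, to the chronological cobordism framework of Putyra \cite{Pu2015}. Throughout, the target of the bracket is the homotopy category of (bounded) complexes over the additive closure of the chronological Bar-Natan category $\chcobIII$, carrying the interchange parameter $\pi$.

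\emph{Cobordism maps from movies.} Every smooth link cobordism $\Sigma\subset\RR^3\times I$ can be isotoped into Morse position and thereby presented by a \emph{movie}: a finite sequence of planar diagrams $D_0,D_1,\dots,D_n$ in which consecutive frames differ by a single Reidemeister move or a single Morse modification (birth, death, or saddle). To each elementary frame change I assign a morphism in the homotopy category over $\chcobIII$ — a Morse modification goes to the evident elementary chronological cobordism, and a Reidemeister move goes to the homotopy equivalence already used to establish invariance of the generalized Khovanov bracket in \cite{Pu2015} (these are assembled from delooping isomorphisms and Gaussian elimination inside $\chcobIII$). Composing along the movie gives a candidate chain map $[\Sigma]$. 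Since the elementary chronological cobordisms depend on a choice of \emph{chronology} (an ordering, up to the permitted interchanges of distant events, of the critical points), the first step is a \emph{chronological coherence lemma}: any two chronologies on a fixed $\Sigma$ produce chain maps that agree up to an explicit invertible scalar, a monomial in $\pi$. This makes $[\Sigma]$ well defined up to homotopy and an overall invertible scalar, modulo the movie-move check below.

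\emph{Invariance under movie moves.} By the movie-move theorem of Carter and Saito, two movies present isotopic cobordisms if and only if they are related by a finite sequence of the fifteen movie moves MM1--MM15, together with reparametrizations and interchanges of spatially distant events (the latter already handled by the coherence lemma). It therefore suffices to check that each movie move is respected up to homotopy and invertible scalar. Two standard reductions keep this finite: \emph{locality} — each move is supported in a ball, so one may pass from links to tangles and work with the small local complexes, which after delooping are completely explicit — and \emph{symmetry} — reversing, reflecting, and rotating movies identifies many of the moves with one another, as in \cite{Bn2005,CMW2009}, so only a handful of independent verifications survive. The purely Reidemeister moves reduce to coherence relations among the invariance homotopy equivalences of \cite{Pu2015}; the moves involving Morse frames are where the elementary cobordisms genuinely interact and where the $\pi$-scalars must be tracked.

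\emph{Main obstacle.} The crux — and the reason the statement reads ``up to overall invertible scalars'' rather than as an honest equality — is that in $\chcobIII$ the elementary cobordism maps do not commute on the nose: interchanging two spatially separated chronological events introduces a factor of $\pi$, and sliding a Morse event past a Reidemeister move can likewise alter the chronology. Consequently the two sides of the Morse-type movie moves (notably MM10 and MM12--MM15) will match only after multiplication by a monomial in $\pi$, and the real work is to show that every such local discrepancy is accounted for by the coherence lemma, so that a single global scalar suffices and no presentation-dependent ambiguity remains; this is what upgrades ``$[\Sigma]$ is well defined'' to ``$[\Sigma]$ is functorial.'' Once this bookkeeping is in place, functoriality up to homotopy and overall invertible scalars follows, and specializing the parameter gives the advertised consequences: $\pi=-1$ collapses the invertible scalars to $\pm1$, yielding functoriality of odd Khovanov homology up to sign, while $\pi=1$ recovers Bar-Natan's theorem for even Khovanov homology.
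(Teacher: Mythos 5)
Your overall strategy (present the cobordism by a movie, assign maps to elementary frame changes, and check the Carter--Saito movie moves plus chronological reorderings) is the same as the paper's, but two of your key reductions fail in the generalized/odd setting, and the places where they fail are exactly where the real content of the proof lies. First, your ``locality'' step --- passing from links to tangles and computing with small, delooped local complexes as in \cite{Bn2005,CMW2009} --- is not available here: the generalized Khovanov bracket is built from a sign assignment that is a global $1$-cochain on the whole cube of resolutions, constrained by commutativity types of faces that depend on how the local picture closes up, and the theory does not (in this framework) compose along planar gluing of tangles; there is no $Kh$-simplicity argument to quote. The paper's substitute is Lemma~\ref{lem:t2}, which shows that a cobordism performing Reidemeister moves on a braid-like inside tangle induces $\pm\id$ or $\pm\pi\id$; its proof is a genuinely global argument (conjugating by Reidemeister~II equivalences so the nontrivial part sits over a crossingless tangle, observing the resulting map is cubical, and invoking the rigidity of cubical chain isomorphisms, Lemma~\ref{lem:3:uniqueness}), not a local tangle computation. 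Second, your ``symmetry'' reduction (identifying movie moves via reflections and reversals) is not innocent for the odd theory: reflecting a diagram exchanges type X and type Y sign conventions, and the equivalence of these two theories is itself a nontrivial theorem (Theorem~\ref{thm:equivalence}; the original argument in \cite{ORS2007} was incorrect), so you cannot use reflection to cut down the list of checks without first proving it.

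Beyond that, your ``chronological coherence lemma'' is asserted rather than proved, and it is not mere bookkeeping: commuting a death past other elementary cobordisms requires the supergraded refinement and the $\pi$-supernatural-transformation structure of the death map (Remarks~\ref{rem:deathpisupernatural} and \ref{rem:cobordismsuperdegree}), commuting two distant Reidemeister moves is deduced from Lemma~\ref{lem:t2} together with a tunneling construction (Lemma~\ref{lem:worm}) that makes the union of the two disks braid-like, and commuting two saddles produces a sign $-1$, not a power of $\pi$, so the scalar ambiguity is $\{\pm1,\pm\pi\}$ rather than ``a monomial in $\pi$.'' Finally, the Morse-type movie moves 13 and 14 do not follow from coherence alone; in the paper they require explicit computations with carefully chosen crossing orientations and sign assignments, using the (4Tu), (H), and related relations in $\cobIIIgen$, and move 12 needs a separate annihilator argument. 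As written, your proposal identifies the right skeleton but leaves out the specific lemmas that make the skeleton work in the odd/generalized setting.
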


In this theorem, we are assuming that Putyra's cobordism category
is defined over the ring
$\Bbbk:=\mathbb{Z}[\pi]/(\pi^2-1)$
where $\pi$ denotes a formal variable.
In particular, the invertible scalars that can appear
are $\pm 1$ and $\pm \pi$.
After setting $\pi=-1$, the generalized Khovanov bracket specializes to
an odd Khovanov bracket, and thus Theorem~\ref{thm:main} implies:

\begin{corollary}\label{cor:main}
Odd Khovanov homology is functorial under smooth link cobordisms up to sign.
\end{corollary}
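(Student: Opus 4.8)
The plan is to derive the corollary from Theorem~\ref{thm:main} purely by specializing the formal parameter $\pi$. Recall that Putyra's chronological cobordism category and the generalized Khovanov bracket are defined over $\Bbbk=\mathbb{Z}[\pi]/(\pi^2-1)$, and that there is a ring homomorphism $\Bbbk\to\mathbb{Z}$ given by $\pi\mapsto-1$ (indeed $(-1)^2-1=0$, so this is well defined). Base-changing the whole setup along this homomorphism specializes the generalized Khovanov bracket to an odd Khovanov bracket; composing with Putyra's TQFT-type functor $\ffo$ realizing odd Khovanov homology and taking homology recovers the odd Khovanov homology of Ozsv\'ath--Rasmussen--Szab\'o \cite{ORS2007}. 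So the first step of the argument is to recall this identification in the precise form needed here, namely at the level of chain complexes of $\mathbb{Z}$-modules together with the chain maps assigned to movies, and to note that the delooping and Gaussian-elimination isomorphisms used to build the bracket are compatible with the specialization.

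The second step is the observation that makes ``sign'' appear. The overall invertible scalars of $\Bbbk$ are exactly $\pm1$ and $\pm\pi$, and under $\pi\mapsto-1$ all four of these map into $\{\pm1\}\subseteq\mathbb{Z}^\times$. Hence the content of Theorem~\ref{thm:main}---that a smooth link cobordism $\Sigma\subset\mathbb{R}^3\times I$ induces, on generalized Khovanov brackets, a chain map that is well defined up to chain homotopy and multiplication by an element of $\{\pm1,\pm\pi\}$, and that these assignments respect composition and identities up to the same ambiguity---specializes to the statement that $\Sigma$ induces a chain map on odd Khovanov complexes, well defined up to chain homotopy and sign, and functorial up to sign.

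Finally, I would pass from chain complexes to homology: applying the additive functor $\ffo$ (which therefore preserves chain homotopies) and then taking homology sends chain-homotopic maps to equal maps and carries a global sign to the same global sign, so the induced maps on odd Khovanov homology $\okh$ are well defined up to sign and compose correctly, which is exactly the assertion of the corollary. The only place that requires genuine care---and hence the main, if modest, obstacle---is the first step: one must check that the $\pi=-1$ specialization of the abstract chronological-cobordism formalism of \cite{Pu2015} reproduces the ORS complex and its cobordism maps precisely enough that the homotopy-and-scalar ambiguity tracked by Theorem~\ref{thm:main} is exactly the desired $\pm1$ ambiguity, with no additional isomorphisms interfering with functoriality. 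Once this bookkeeping is pinned down, the remainder of the deduction is formal.
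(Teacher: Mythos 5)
Your proposal is correct and follows essentially the same route as the paper: specialize $\pi\mapsto -1$, observe that the group of global invertible scalars $\{\pm1,\pm\pi\}$ collapses to $\{\pm1\}$, and pass through the odd TQFT functor of Subsection~\ref{subs:TQFTs} to homology. The paper treats this as an immediate consequence of Theorem~\ref{thm:main} without further comment, so your extra remarks (e.g.\ on compatibility with delooping/Gaussian elimination, which the paper does not in fact use to build the bracket) are harmless elaboration rather than a genuinely different argument.
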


Odd Khovanov homology differs from even Khovanov homology in that, to construct
the odd chain complex, one needs to make certain additional choices.
Specifically, one needs to choose arrows at the crossings of the link diagram (called
crossing orientations) along with a corresponding valid sign assignment.
There are two overall types of valid sign assignments: type X and type Y.
It was claimed in  \cite[Lemma 2.4]{ORS2007} that both types yield
isomorphic odd Khovanov complexes, but the proof given there was incorrect.
Putyra's construction of the generalized Khovanov bracket was originally based on a generalization of
type Y assignments  \cite{Pu2015}, but it works equally well for type X assignments.
We will prove the following, and as a byproduct, obtain a correct proof of
 \cite[Lemma 2.4]{ORS2007}:

\begin{theorem}\label{thm:equivalence}
Type X and type Y assignments yield naturally isomorphic generalized Khovanov functors.
\end{theorem}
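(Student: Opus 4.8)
\emph{Proof strategy.} The plan is to build the natural isomorphism by a change-of-auxiliary-data argument, much in the spirit of the proofs that the generalized Khovanov bracket does not depend on the auxiliary choices used to define it, but with one extra ingredient. Namely, a type X assignment and a type Y assignment on a fixed diagram with fixed crossing orientations differ by a cocycle on the cube of resolutions that need not be a coboundary, so no vertexwise rescaling of the chain modules can pass from one to the other directly; the missing ingredient is the isomorphism induced by changing a crossing orientation.

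First I would fix a link diagram $D$ together with a choice of crossing orientations, and compare the cube of resolutions of $D$ carrying a valid type X assignment $\epsilon_X$ with the same cube carrying a valid type Y assignment $\epsilon_Y$. The point to isolate is that whether a given $2$--dimensional face of the cube imposes the ``type X'' or the ``type Y'' local condition is controlled by the combinatorics of the crossing arrows at the two crossings involved, so that a suitable change of the crossing orientations of $D$ converts a valid type X assignment into a valid type Y assignment, possibly after correcting by an explicit $0$--cochain $\mu$ on the cube (a vertexwise rescaling by units of $\Bbbk$). I would then invoke the fact---available in Putyra's framework \cite{Pu2015} as an instance of the isomorphism coming from a change of orientation data, and specializing at $\pi=-1$ to the corresponding statement in \cite{ORS2007}---that changing a crossing orientation induces a canonical chain isomorphism of generalized Khovanov complexes. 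Composing that isomorphism with the rescaling by $\mu$ yields an explicit isomorphism $\eta_D\colon \mathcal{F}_X(D)\to\mathcal{F}_Y(D)$, and checking that $\eta_D$ is a chain map reduces to a single computation on a generic $2$--face of the cube.

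Next I would verify naturality. Since the generalized Khovanov functor is generated, as a functor on the cobordism category, by the Reidemeister move cobordisms and the elementary Morse cobordisms (births, deaths, and saddles), it suffices to check that $\eta$ commutes---up to homotopy and up to one of the global units $\pm1,\pm\pi$---with the chain map attached to each of these. For the Morse moves this is essentially immediate: the relevant maps are supported in a ball disjoint from all crossings, whereas $\eta_D$ only modifies the crossing data. For a Reidemeister move one must track how $\epsilon_X$, $\epsilon_Y$, the crossing arrows, and the $0$--cochain $\mu$ transform under the move and confirm that the resulting square of chain maps commutes up to a global unit; this is the same bookkeeping that appears in proving invariance of the bracket within a fixed type, carried out with the crossing-orientation isomorphism inserted. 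Assembling the $\eta_D$ produces the desired natural isomorphism (invertible, with inverse constructed in the same way), and restricting to a single fixed diagram and setting $\pi=-1$ recovers a correct proof of \cite[Lemma 2.4]{ORS2007}.

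The step I expect to be the main obstacle is precisely the non-coboundary phenomenon flagged at the outset: because a type X and a type Y assignment on the same diagram are cohomologically inequivalent as edge-cochains, the argument cannot be a pure rescaling and genuinely needs the crossing-orientation isomorphism, so the delicate work is to pin down the $0$--cochain $\mu$ together with the signs and $\pi$--powers in the crossing-orientation map precisely enough that the face relations hold on the nose and that naturality holds up to a single global unit rather than one that varies over the diagram.
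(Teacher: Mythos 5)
There is a genuine gap, and it sits exactly at the step you flag as the ``missing ingredient.'' Your plan is to stay on the fixed diagram $D$ and to bridge the type X and type Y theories by a change of crossing orientations plus a vertexwise rescaling by a $0$--cochain $\mu$. But changing crossing orientations cannot do this job: whether a two-dimensional face of the cube is of type X or of type Y is determined by the planar embedding of the resolved configuration (the two types are interchanged by an orientation-\emph{reversing} homeomorphism of $\mathbb{R}^2$), not by the arrows at the crossings, and the $\sigma$-value assigned to such a face is a pure convention that does not reference the arrows. Moreover, the isomorphism induced by flipping a crossing orientation is, after the compensating adjustment of the sign assignment, literally the identity on the bracket, so composing it with rescalings still only produces cubical maps whose components are invertible scalars times identity cobordisms. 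For such a map to intertwine a differential built from a valid type X assignment $\epsilon_X$ with one built from a valid type Y assignment $\epsilon_Y$, the ratio $\epsilon_X\epsilon_Y^{-1}$ would have to be a coboundary of a $0$--cochain; but $\delta(\epsilon_X\epsilon_Y^{-1})=\sigma_X\sigma_Y^{-1}$ is the $2$--cochain equal to $\pi$ on every X/Y (``ladybug'') face, so this fails as soon as the diagram has one such face. Quantitatively, flipping the arrows at a set $S$ of crossings changes the validity condition on a ladybug face by $\pi$ exactly when that face involves exactly one crossing of $S$, so your strategy needs every ladybug face to contain precisely one flipped crossing --- a combinatorial constraint with no reason to be satisfiable in general (and the $0$--cochain $\mu$ cannot repair it, since $\delta\delta\mu=1$). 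This is essentially the same mechanism by which the original argument for \cite[Lemma 2.4]{ORS2007} breaks down, which is precisely what Theorem~\ref{thm:equivalence} is meant to fix; asserting the conversion claim without proof therefore leaves the core of the theorem unproved, and the naturality discussion built on $\eta_D$ inherits the problem.

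The paper's proof uses a genuinely different, geometric mechanism that supplies what arrow changes cannot: reflection. One compares $D$ with the diagram $D'$ of the link rotated $180^\circ$ about an axis $\lambda$ in the plane. Since reflection is orientation-reversing on $\mathbb{R}^2$, it interchanges type X and type Y configurations, so a type Y assignment for $(D',o')$ \emph{is} a type X assignment for $(D,o)$, giving the identification $\mathcal{R}(\llbracket D',o',\epsilon'\rrbracket)=\llbracket D,o,\epsilon'\rrbracket$. The rotation is an ambient isotopy in $\mathbb{R}^3$, hence induces a homotopy equivalence $\llbracket D,o,\epsilon\rrbracket\simeq\llbracket D',o',\epsilon'\rrbracket$ which is natural by functoriality within the fixed (type Y) theory (Lemma~\ref{lem:XY1}); the compatibility of $\mathcal{R}$ with the cobordism maps is checked on elementary cobordisms (Lemma~\ref{lem:XY2}); and, crucially, the reflection endofunctor $\mathcal{R}$ is identified with the identity via a $\pi$-supernatural isomorphism built from the component-permuting cobordisms $S_C$ in the supergraded refinement of Subsection~\ref{subs:supergraded}, whose naturality up to powers of $\pi$ (Lemma~\ref{lem:XY3}) relies on the homogeneity of cobordism maps with respect to the supergrading. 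If you want to salvage your write-up, you would need to replace the arrow-change bridge by something playing the role of this reflection comparison; within the fixed planar diagram there is an honest cohomological obstruction to the bridge you propose.
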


To prove Theorem~\ref{thm:main}, we will assign a chain map $\Phi_F$
to each smooth link cobordism $F\subset\mathbb{R}^3\times I$.
The definition of this chain map was previously outlined by Putyra \cite{Pu2015}
and is based on presenting $F$ by a movie of link diagrams.
To prove that $\Phi_F$ is well-defined, up to homotopy and invertible scalars,
we must show that it is invariant under the 15 Carter-Saito movie moves \cite{CS1997}
and additional movie moves that correspond to time-reordering distant portions of a link cobordism.

A key result in this context will be our Lemma~\ref{lem:t2}, which asserts that the generalized Khovanov bracket
of a link diagram $D$ has no interesting automorphisms induced by Reidemeister moves acting
on certain types of tangles in $D$. This lemma provides a substitute for Bar-Natan's approach via
$Kh-$simple tangles \cite{Bn2005}, but its proof does not rely on any
planar composition properties of the generalized Khovanov bracket. In particular,
our proof is independent of recent constructions that extend odd Khovanov homology
to tangles \cite{NP2020,SV2023,Sp2024}.

While Lemma~\ref{lem:t2} is enough to prove invariance of $\Phi_F$ under movie moves 6-10,
some of the other movie moves require explicit computations of the chain maps induced by the
two sides of the move. Such is the case with movie moves 13 and 14, where we can simplify the
proof of invariance by choosing appropriate sign assignments.

It was shown in  \cite{MWW2022} that even Khovanov homology is functorial under smooth link cobordisms in $S^3\times I$.
In contrast,
we will see that odd Khovanov homology is only functorial in $\mathbb{R}^3\times I$:

\begin{theorem}\label{thm:noninvariance} 
There is an infinite family of smooth link cobordisms in $\mathbb{R}^3\times I$ which are mutually ambient isotopic in $S^3\times I$, but which induce different maps on odd Khovanov homology.
\end{theorem}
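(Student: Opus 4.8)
The plan is to isolate the one phenomenon that distinguishes $S^3\times I$-isotopy from $\mathbb{R}^3\times I$-isotopy and to show that the generalized (hence, at $\pi=-1$, odd) Khovanov cobordism map $\Phi$ detects it with \emph{infinite} order. The relevant move is the sweep-around: a portion of a link diagram is pushed across the point at infinity, i.e.\ past the entire remainder of the diagram and back to its original location. A link cobordism $S$ realizing a sweep-around is, by construction, ambient isotopic to the identity cylinder $L\times I$ in $S^3\times I$ (on $S^2$-diagrams the sweep-around is a trivial isotopy of diagrams), whereas in $\mathbb{R}^3\times I$ it need not be trivial; and by \cite{MWW2022} it is exactly invariance under this move (which even Khovanov homology has) that upgrades $\mathbb{R}^3\times I$-functoriality to $S^3\times I$-functoriality. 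Hence, to prove Theorem~\ref{thm:noninvariance} it suffices to produce a link $L$, a self-cobordism $S\colon L\to L$ in $\mathbb{R}^3\times I$ realizing a sweep-around, and to show that $\Phi_S$ is of infinite order on $\mathit{Kh}_{\mathrm{odd}}(L)$ modulo the overall-sign ambiguity of Theorem~\ref{thm:main}.

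For the construction I would take $L$ to be a small link: the essential even/odd discrepancy is already visible for unlinks, where odd Khovanov homology is an exterior algebra rather than a tensor power of $\mathbb{Z}[X]/(X^2)$, so I expect a two- or three-component link to suffice. I would present $S$ as an explicit movie in which the swept strand passes once over and once back across each strand of $L$, so that $\Phi_S$ is the corresponding finite composite of the Reidemeister chain maps constructed in the body of the paper, and then simplify. The expected outcome is $\Phi_S=\pm(\mathrm{id}+N)$ with $N\neq0$ a nilpotent endomorphism satisfying $N^2=0$, the off-diagonal term $N$ being produced precisely by the sign asymmetries of the ORS-type differential; for even Khovanov homology the same computation must collapse to $\pm\mathrm{id}$, in agreement with \cite{MWW2022}, and this contrast is the concrete incarnation of the even/odd difference. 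Choosing advantageous crossing orientations and a suitable sign assignment --- as is done elsewhere in the paper for movie moves $13$ and $14$ --- should make the bookkeeping manageable.

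Granting this computation, let $F_n$ be the cobordism obtained by stacking $n$ copies of $S$, so that $F_0=L\times I$. Each $F_n$ is a self-cobordism of $L$ in $\mathbb{R}^3\times I$, and applying the sweep-around triviality $n$ times shows that all the $F_n$ are mutually ambient isotopic to $L\times I$, hence to each other, in $S^3\times I$. On the other hand, by Theorem~\ref{thm:main} the map $\Phi_{F_n}$ agrees up to sign with $\Phi_S^{\,n}=\mathrm{id}+nN$ (using $N^2=0$), so it suffices to see that the $\Phi_S^{\,n}$, $n\geq0$, are pairwise distinct up to sign: if $\Phi_S^{\,n}=\pm\Phi_S^{\,m}$ then $\mathrm{id}+(n-m)N=\pm\mathrm{id}$, and the $-$ case is excluded by taking traces (a traceless nilpotent cannot equal $-2\,\mathrm{id}$), so $(n-m)N=0$ and $n=m$. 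This proves Theorem~\ref{thm:noninvariance}. The same argument works whenever $\Phi_S$ is $\pm$ a non-identity unipotent transformation, so the precise form of $N$ is inessential; since the even computation gives exactly $\pm\mathrm{id}$, I expect the odd answer to be a ``small perturbation'' of this, i.e.\ unipotent, even if the explicit choice of $S$ or $L$ needs adjustment.

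The main obstacle is the explicit computation of $\Phi_S$, and in particular the sign bookkeeping through the movie needed to certify that $N\neq0$ (equivalently, that $\Phi_S\neq\pm\mathrm{id}$); this is where taking $L$ small enough to be fully computable, and exploiting the freedom in crossing orientations and sign assignments, really earns its keep. By contrast, the topological input --- that a sweep-around cobordism is $S^3\times I$-trivial, and that invariance under sweep-arounds is what is at stake --- is elementary or may be cited from \cite{MWW2022}, and the concluding linear-algebra step is routine.
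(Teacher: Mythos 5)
Your strategy is essentially the paper's: the paper also uses a sweep-around self-cobordism of a link (there the two-component unlink $U_2=A\sqcup B$, with $S$ playing the role of their $R=R_2\circ R_1^{-1}$, the swept strand being a trivial disk bounding $B$) and finds exactly the unipotent behavior $\mathrm{id}+nN$ with $N\neq 0$ that you predict, namely $1\mapsto 1$, $A\mapsto A+(3-2n)(A-B)$. The one thing you leave granted --- the explicit computation that $N\neq 0$, i.e.\ that a single sweep-around acts nontrivially on $\mathit{Kh}_{\mathrm{odd}}(U_2)=\Lambda^*\langle A,B\rangle$ --- is precisely the content of the paper's short calculation, so your proposal is a correct plan but not yet a proof until that small computation (which does go through for $U_2$, as you anticipated) is carried out.
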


The examples that we will use to prove this theorem will also show that the odd Khovanov cobordism
maps are not invariant under ribbon moves (cf.  \cite{Og2000}), which is
another difference with the even setting \cite{CSS2006}.

Because of its construction through exterior algebras, there is no obvious odd Lee homology.
Moreover, any link cobordism that is a connected sum with a trivial, unlinked torus induces
the zero map on odd Khovanov homology. This means that all of the arguments that were
advanced in the even setting \cite{Ra2005,CSS2006,Ta2006} to show that certain cobordism maps
are uninteresting fail in the odd setting.

This opens the possibility of using odd Khovanov homology to construct
a nontrivial invariant for smooth 2-knots embedded in the 4-sphere. Specifically, let $K\subset S^4$ be such a
2-knot. After removing a small neighborhood of a point $p\in K$,
this 2-knot becomes a slice disk for the unknot, $U$,
or equivalently, a smooth link cobordism $\emptyset\rightarrow U$.
This cobordism
induces a map
\[
\Phi\colon\mathbb{Z}\longrightarrow\Lambda^*(U)=\mathbb{Z}\{1,U\}
\]
on odd Khovanov homology. For degree reasons, it follows
that this map must send the generator $1\in\mathbb{Z}$ to an integer multiple
\[
\Phi(1)=n1
\]
We can thus define an invariant $n(K)\in\mathbb{N}$ by taking
the absolute value of the integer $n$ from the above equation.

While the corresponding invariant derived from even Khovanov homology is
always equal to $1$  \cite{Ra2005,Ta2006}, our invariant $n(K)$ can be any positive odd number.
(The fact that it must be odd follows from a comparison with $\mathbb{Z}_2$-Khovanov
homology using the universal coefficient theorem.)
We conjecture:

\begin{conjecture}\label{conj:2knots}
For every smooth 2-knot $K\subset S^4$, the number
$n(K)$ is equal to the order of the first homology
of the branched double-cover of $S^4$, branched along $K$.
\end{conjecture}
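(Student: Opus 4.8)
The plan is to reduce the conjecture to a statement about a handle presentation of the branched double cover, and then to extract the same data from a chain-level computation of the induced cobordism map $\Phi_D$ (written $\Phi$ above). For the topological reduction, removing an open $4$-ball neighborhood of $p$ writes $S^4=D^4\cup_{S^3}D^4$ with $K$ presented as $D\cup_U D'$, where $D$ is the given slice disk for the unknot $U\subset S^3$ and $D'$ is a trivial disk. Passing to branched double covers gives $\Sigma_2(S^4,K)=W\cup_{S^3}D^4$, where $W$ is the double cover of $D^4$ branched along $D$, using $\Sigma_2(S^3,U)=S^3$ and that the branched cover of $(D^4,D')$ is again $D^4$. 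Since capping off with a $4$-ball does not change $H_1$, we get $H_1(\Sigma_2(S^4,K))\cong H_1(W)$, and this group has order $|\Delta_K(-1)|$, which is odd --- hence nonzero and $H_1(W)$ finite --- because $\Delta_K(1)=\pm1$. It therefore suffices to prove $n(K)=|H_1(W)|$.

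For the chain-level computation, note that by Theorem~\ref{thm:main} the homotopy class of $\Phi_D$, and hence $\Phi_D(1)\in\Lambda^*(U)$ up to an invertible scalar, is independent of the movie presenting $D$; one may therefore use a band movie: a sequence of births of unknots, then saddles attached along embedded bands with interspersed Reidemeister moves, ending at the standard unknot diagram. Following the induced chain map through the cube of resolutions, $\Phi_D(1)$ is represented by the cycle obtained from the distinguished generator of $\okh(\emptyset)=\ZZ$ by applying the generalized merge and split maps and the Reidemeister isomorphisms; after choosing crossing orientations and a sign assignment adapted to the whole movie --- of the simplifying kind used in the proofs of invariance under movie moves $13$ and $14$ --- the surviving term lies in homological degree $0$ and equals, up to sign, $\det(A)\cdot 1$, where $A$ is the integer matrix recording the band attachments with signs dictated by the sign assignment. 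The appearance of a genuine determinant rather than a unit is exactly what separates the odd from the even theory: in even Khovanov homology the Lee deformation forces $\Phi_D(1)=\pm1$ for every slice disk \cite{Ra2005,Ta2006}, whereas the generalized split map, which wedges with a difference $x_a-x_b$, can produce larger integers once $\pi=-1$.

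To close the argument one matches $A$ with the branched double cover. The same band movie yields a handle decomposition of $W$: the $0$-handles cover the unknots created by the births, and the bands lift to handles of the double branched cover in the standard way (as in Akbulut and Kirby's description of branched covers of surfaces in $4$-manifolds). Then $H_1(W)$ is presented by the matrix of linking and intersection numbers of the lifted band cores, a matrix governed by the $(-1)$-eigenspace of the covering involution --- the very data recorded by the specialization $\pi=-1$ on the Khovanov side. Identifying this presentation matrix with $A$ up to a $\ZZ$-change of basis would give $n(K)=|\det A|=|H_1(W)|$, consistent with the fact noted above that $n(K)$ is odd.

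The delicate step is the chain-level computation. The generalized saddle maps do not compose strictly --- distant saddles anticommute and the homotopies supplied by Theorem~\ref{thm:main} must be tracked --- and, since $\Phi_D$ lives on the full cube of resolutions, its value on the generator is not literally a composite of merge and split maps; isolating a clean determinant requires a movie-wide choice of crossing orientations and sign assignment together with a proof that the resulting integer is well defined up to sign independently of those choices. Matching the sign convention of the generalized split map with the handle-theoretic description of $W$ is a further subtlety --- one expects both to be controlled by the covering involution, but confirming this means working carefully through the construction of the branched double cover from an explicit band presentation --- and this is where the bulk of the effort would lie. A potential shortcut, contingent on the still-open odd analogue of the Ozsv\'ath-Szab\'o spectral sequence from reduced odd Khovanov homology to the Heegaard Floer homology of the branched double cover, would instead deduce the formula from functoriality of that spectral sequence under the cobordism $W$.
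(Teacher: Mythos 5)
Be aware of what you are being asked to verify: Conjecture~\ref{conj:2knots} is stated in this paper as an open conjecture, and the paper itself contains no proof of it --- the authors only announce proofs of special cases (ribbon 2-knots via a module structure of reduced odd Khovanov homology over $\Lambda^*H_1(\Sigma(L);\mathbb{Z})$ in \cite{MW2024a}, and even-twist spun knots via a Hecke algebra action in \cite{MW2024b}). So there is no ``paper proof'' to match, and the question is whether your argument stands on its own. It does not: it is a research plan whose two central steps are asserted rather than proved, as you yourself concede. First, the chain-level claim that, for a suitable movie-wide choice of crossing orientations and sign assignment, $\Phi_D(1)=\pm\det(A)\cdot 1$ for an integer matrix $A$ ``recording the band attachments'' is never established. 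Nothing in the paper's machinery produces such a determinant: the cobordism map is only defined up to homotopy and a global unit, its nonzero contribution to the generator is not literally a composite of merge and split maps in a single resolution (it is spread over the cube, with Reidemeister equivalences and the reordering homotopies of Theorem~\ref{thm:main} interposed), and you give no definition of $A$, no proof that the alleged integer is independent of the band presentation, the sign assignment, or the crossing orientations, and no mechanism forcing the answer to be a determinant rather than some other invariant of the band data. Second, the identification of $A$ with a presentation matrix for $H_1(W)$, $W$ the double cover of $D^4$ branched along the slice disk, is supported only by the heuristic that both ``should be controlled by the covering involution''; this is precisely the geometric content of the conjecture and cannot be waved through. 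The closing ``shortcut'' via an odd analogue of the Ozsv\'ath--Szab\'o spectral sequence is explicitly contingent on an unproven construction, so it cannot close the gap either.

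The parts that are sound are the preliminary reductions: the decomposition $\Sigma_2(S^4,K)=W\cup_{S^3}D^4$ and hence $H_1(\Sigma_2(S^4,K))\cong H_1(W)$ is correct, and the observation that the even theory collapses to $\pm1$ by \cite{Ra2005,Ta2006} while the odd split map (wedging with $x_a-x_b$) can produce larger integers is consistent with the discussion in the introduction (including the parity of $n(K)$). But the reduction from ``$n(K)=|H_1(\Sigma_2(S^4,K))|$'' to ``$n(K)=|H_1(W)|$'' merely restates the conjecture in relative form; everything after that point is the actual mathematical content, and it is missing. If you want to pursue this, note that the authors' own route for the cases they can handle is different in kind: rather than extracting a determinant from a single chain-level computation, they build extra algebraic structure (the $\Lambda^*H_1(\Sigma(L);\mathbb{Z})$-module structure, respectively the $H(-1,n)$ action) that lets them compute the cobordism maps geometrically; engaging with that structure is likely a more realistic way to control the sign and chronology issues you flag than an ad hoc movie-wide sign assignment.
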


In two upcoming papers \cite{MW2024a,MW2024b}, we will further explore this conjecture.
In the first of these, \cite{MW2024a}, we will show that the reduced odd Khovanov homology
of a knot or link $L$ is naturally a module over the exterior algebra
\[
\Lambda^*H_1(\Sigma(L);\mathbb{Z}),
\]
where $\Sigma(L)$ denote the branched double-cover of the 3-sphere $S^3$, branched along $L$.
While this module structure is defined in purely diagrammatic terms, it will allow us to compute
certain odd cobordism maps geometrically.
In particular, we will use it to prove Conjecture~\ref{conj:2knots}
for all ribbon 2-knots, and to show that Levine-Zemke's main result from \cite{LZ2019} (about
ribbon concordances inducing injective maps on even Khovanov homology)
goes through for odd Khovanov homology with rational coefficients. The
action of $\Lambda^*H_1(\Sigma(L);\mathbb{Z})$ also provides a better understanding of
an observation of Shumakovitch about the presence of torsion in the odd Khovanov homology
of certain Pretzel knots \cite{Sh2011}.

In our second upcoming paper, \cite{MW2024b}, we will describe an action of the Hecke algebra $H(-1,n)$
on the odd Khovanov homology of the $n$-cable of an even-framed link $L\subset\mathbb{R}^3$.
Using a related functor, we will then prove Conjecture~\ref{conj:2knots} for all
even-twist spun knots.
A similar approach should lead to a proof of Conjecture~\ref{conj:2knots} for all odd-twist spun knots, in which case
$n(K)$ should be equal to 1, and the action of $H(-1,n)$ should be replaced
by a symmetric group action similar to the one from \cite{GLW2017}.

\subsection{Organization}

The remainder of this paper is organized in the following manner.  
In \textbf{Section~\ref{s:preliminaries}} we recall the initial link cobordism category and the target category for defining the generalized Khovanov bracket.  
In \textbf{Section~\ref{s:generalized}} we recall the construction of generalized Khovanov bracket as an invariant, prove the equivalence of type X and type Y theories, and define the chain maps that the generalized Khovanov bracket assigns to elementary link cobordisms.  
In \textbf{Section~\ref{s:functoriality}} we prove the main theorem of this paper, the functoriality of generalized Khovanov bracket up to
homotopy and invertible scalars.  
In \textbf{Section~\ref{s:noninvariance}} we prove that odd Khovanov homology is not functorial in $S^3\times I$.

\section{Preliminaries}\label{s:preliminaries}

\begin{convention}
	For cobordisms, the front is the surface \enquote{closest} to the reader.  Similarly, for movies, the front is the bottom of the frame.  Additionally movies and cobordisms should be read from the bottom to the top of the page.  The diagram at the bottom is initial with respect to time, and the diagram at the top is final or terminal.
\end{convention}

\subsection{The Source Category: $\cobIV$}

Our generalized Khovanov functor will be a functor on the category $\cobIV$ of smooth
oriented link cobordisms in $\mathbb{R}^3\times I$, considered up to smooth ambient isotopy rel boundary. 
We will assume that the objects in $\cobIV$ are oriented links in $\mathbb{R}^3$ which are in general position
with respect to the projection onto the $xy$-plane, so that they can
be represented by oriented link diagrams. For technical reasons,
the construction of the generalized Khovanov bracket requires that link diagrams be enhanced
with additional data consisting of crossing orientations and a valid sign assignment. However,
we will not assume that the objects of
$\cobIV$ are equipped with this extra data. Instead, we will show in Subsection~\ref{subs:colimit}
that the generalized Khovanov brackets for different choices of the auxiliary data
can be repackaged as a single invariant.

We note that any smooth link cobordism $S\subset\mathbb{R}^3\times I$ is smoothly
ambient isotopic to a cobordism $S'$ such that the height function $\mathbb{R}^3\times I\rightarrow I$
restricts to a Morse function on $S'$. Furthermore, it can be assumed that all critical
points of this Morse function occur at different heights. By intersecting $S'$
with hyperplanes $\mathbb{R}^3\times\{t_i\}$ for sufficiently many generic $t_i\in I$ with $0=t_0<t_1<\ldots<t_n=1$,
one can then represent $S'$ as a \defw{movie} of link diagrams $D_0,\ldots,D_n$, such that any two
consecutive link diagrams differ by one of the following: a planar isotopy, and elementary birth, death, or saddle move,
or a Reidemeister move.
Carter and Saito \cite{CS1991,CS1993} showed that two movies represent smoothly ambient
isotopic link cobordisms in $\mathbb{R}^3\times I$ if and only if they are related by certain \defw{movie moves}. In more detail,
their result can be stated as follows:

\begin{theorem}[\cite{CS1993}]\label{thm:CS}
	Two movies represent smoothly ambient isotopic link cobordisms if and only if they differ by a finite sequence of 
	the following moves:
\begin{itemize}
\item Chronological movie moves.
\item Planar ambient isotopy moves.
\item The fifteen particular movie moves
depicted in Figures \ref{fig:4:TIMM}, \ref{fig:4:TIIMM}, \ref{fig:4:MM11}, \ref{fig:4:MM12}, \ref{fig:4:MM13}, \ref{fig:4:MM14}, and \ref{fig:4:MM15}.
\end{itemize}
\end{theorem}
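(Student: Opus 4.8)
The plan is to prove the two directions separately. The ``if'' direction is routine: each move in the list is realized by a smooth ambient isotopy of $\mathbb{R}^3\times I$ fixing $\mathbb{R}^3\times\{0,1\}$. A chronological movie move is induced by an isotopy that interchanges the heights of two critical points of the height function lying in disjoint balls; a planar ambient isotopy move is induced by an isotopy supported in a tube $U\times I$; and each of the fifteen movie moves is a compactly supported local modification that one writes down explicitly. No new idea is needed beyond exhibiting these isotopies.

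The content is the ``only if'' direction. Suppose movies $M$ and $M'$ present ambient isotopic cobordisms $S,S'\subset\mathbb{R}^3\times I$, and fix a smooth ambient isotopy $\{\Phi_s\}_{s\in[0,1]}$ rel $\mathbb{R}^3\times\{0,1\}$ with $\Phi_0=\id$ and $\Phi_1(S)=S'$. Applying parametrized transversality --- Cerf theory for the height function $h\colon\mathbb{R}^3\times I\to I$ restricted to the family $\Phi_s(S)$, together with Thom multijet transversality for the level slices --- one perturbs the isotopy, keeping its endpoints fixed, so that: for all but finitely many $s$ the restriction $h|_{\Phi_s(S)}$ is Morse with pairwise distinct critical values, and hence its level slices determine a movie $M_s$, unique up to planar ambient isotopy (the second move in the list); and at each of the finitely many exceptional parameters exactly one codimension-one degeneration occurs, namely a birth or death of a canceling pair of critical points of $h$, or a collision of two critical values, or --- inside a single frame --- the degeneration of a Reidemeister-type event into one of its standard one-parameter models. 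After inserting planar isotopies we may assume $M_0=M$ and $M_1=M'$.

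It then remains to match the change $M_{s^-}\rightsquigarrow M_{s^+}$ at each exceptional parameter with one of the listed moves. This is a local computation with finitely many cases. If the two critical points involved (born, dying, or whose values collide) lie in disjoint balls, the change is a chronological movie move. Otherwise the two events interact, and writing down the normal form of the corresponding singularity of a generic one-parameter family of generic surfaces in $4$-space produces one of the fifteen Carter--Saito moves: these are precisely the ways a Reidemeister move can degenerate or cancel, and the ways two of the elementary events (Reidemeister move, birth, death, saddle) can collide. The bulk of the proof is exactly this enumeration, carried out via the singularity theory of surface diagrams (Roseman's moves and the chart calculus) in \cite{CS1991,CS1993}.

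The main obstacle is this genericity-and-enumeration step. One must check that a one-parameter family can be pushed off every degeneration of codimension $\ge 2$ while fixing the boundary, and then that the codimension-one list above is exhaustive and that each stratum contributes a single move up to composition with planar isotopy and chronological moves (which exactly account for the freedom in presenting a fixed surface by a movie). The rel-boundary condition and the noncompactness of $\mathbb{R}^3\times I$ are dealt with in the standard way --- working in a boundary collar and with compactly supported isotopies --- and do not alter the structure of the argument.
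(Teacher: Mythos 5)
The paper does not prove Theorem~\ref{thm:CS}; it states it as a cited result of Carter and Saito \cite{CS1991,CS1993} and makes no attempt to reprove it. So there is no ``paper's own proof'' against which to compare your argument.

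What you have written is a reasonable conceptual outline of the Carter--Saito proof strategy from the cited sources: a parametrized transversality argument (Cerf theory for the height function plus multijet transversality for the level slices and their plane projections) reduces the problem to a finite list of codimension-one degenerations, and an exhaustive case analysis matches each stratum with one of the listed moves. Two caveats are worth flagging. First, you phrase genericity of a parameter value $s$ as ``the restriction $h|_{\Phi_s(S)}$ is Morse with pairwise distinct critical values,'' but being Morse in the time direction is not sufficient to produce a movie of link diagrams: one also needs each level slice to project to a \emph{generic} planar diagram (only transverse double points, no triple points, no tangencies). The interesting degenerations in fact live mostly in the projection, not in the height function, and this is where nearly all of the fifteen movie moves arise; your later invocation of Thom multijet transversality for the level slices presumably covers this, but the statement as given understates where the actual content is. Second, the enumeration step is not merely ``the bulk of the proof'' to be punted to the references --- it is essentially \emph{all} of the proof, and without it the argument reduces to the standard template that any movie-move theorem must satisfy. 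As a sketch of the mechanism behind \cite{CS1993} your account is accurate, but it does not constitute an independent proof and should not be read as one.
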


By a chronological movie move, we mean a movie move that corresponds to vertically reordering
distant critical points in a link cobordism, or to reordering distant Reidemeister moves with each other
or with critical points. A planar ambient isotopy move is a movie move that consists in replacing
 a movie of the form $D_0, f(D_0), f(D_1)$ by the movie $D_0, D_1, f(D_1)$, where
 the transition $D_0\rightarrow f(D_0)$ corresponds to a planar ambient isotopy, and $f$
 is the time-one map in this planar ambient isotopy.

For Carter and Saito's fifteen particular movie moves, the first ten movie moves are \enquote{do nothing moves} in that each given strip in Figures~\ref{fig:4:TIMM} and \ref{fig:4:TIIMM} is equivalent to the identity cobordism from the first to the final frame.  The final five moves are non-reversible, and must be considered as separate movie moves in the forward and backward directions.  Like Reidemeister moves, where there are multiple versions of some of the moves associated with changing particular crossings, the movie moves also have multiple variants when applicable. In \cite{Bn2005}, the fifteen movie moves are divided into three subgroups, each with five members titled type I, type II, and type III movie moves. We will follow this convention.  The type I movie moves correspond to cobordisms generated by doing and undoing the same Reidemeister move. The type III movie moves are the only ones that involve births or deaths.

Note that Theorem~\ref{thm:CS} implies that $\cobIV$ can be described more combinatorially as the category
whose objects are oriented link diagrams in $\mathbb{R}^2$, and whose morphisms are movies of oriented
link cobordisms, modulo the moves from Theorem~\ref{thm:CS}.

\subsection{The Target Category: $\cobIIIgen$}\label{subs:targetcategory}

The target category of our functor was first developed by Putrya in \cite{Pu2015} and further refined in \cite{PL2016}.
While the following constructions are due to Putrya our notation is closer to Bar-Natan's in \cite{Bn2005}.  First consider $\chcobIII$, the category whose objects are
unoriented closed 1-manifolds embedded in the plane  and whose morphisms are chronological cobordisms embedded in $\mathbb{R}^2\times I$.
The cobordisms are chronological in the sense that
\begin{enumerate}
\item
the restriction of the projection $\mathbb{R}^2\times I\rightarrow I$ to the cobordism constitutes a Morse function
whose critical points occur at different heights, and
\item
each descending manifold of an index-1 or -2
critical point is equipped with a local orientation around the critical point.
\end{enumerate}
Cobordisms in $\chcobIII$ are considered up to smooth ambient isotopies which preserve the chronological structure.
Explicitly, this means that each intermediate cobordism in the isotopy satisfies properties (1) and (2) above,
and that the local orientations on the descending manifolds are preserved by the ambient isotopy.
The composition in $\chcobIII$ is given by vertical stacking of cobordisms and identifying $\mathbb{R}^2\times[0,2]$
with $\mathbb{R}^2\times I$.

Note that any morphism in $\chcobIII$ is a composition of finitely many chronological cobordisms without critical points
or only one critical point. Chronological cobordisms with exactly one critical points fall into the following types: births, clockwise deaths,
counterclockwise deaths, merges, and splits. These types of cobordisms are shown schematically
in Figure~\ref{fig:generatingCobordisms}, where the last two pictures do not necessarily reflect
the embedding into $\mathbb{R}^2\times I$, and any additional components without critical points are
are omitted.

\begin{figure}[H]
	\centering
	\includeCob{S02F002}
	\caption{Cobordisms with only one critical point}
	\label{fig:generatingCobordisms}
\end{figure}

  While merges have local orientations, they are rarely relevant in our setting and will often be omitted.  To simplify pictures of cobordisms, we will often also omit the orientations of the splits and deaths. In such cases, we will use the following convention:

\begin{convention}\label{conv:2:orientations}
	The default orientations are clockwise on deaths,
	and forward or to the right on saddles.
	Examples of this convention are shown in Figure~\ref{fig:defaultOrientations}.
\end{convention}

\begin{figure}[H]
	\centering
\noindent\begin{minipage}[b]{0.333\textwidth}
	\begin{equation}
		\label{eq:deathSignChange}
		\includeCob{S02F031}
	\end{equation}
\end{minipage}%
\begin{minipage}[b]{0.334\textwidth}
	\begin{equation}
		\label{eq:splitSignChange}
		\includeCob{S02F033}
	\end{equation}
\end{minipage}%
\begin{minipage}[b]{0.333\textwidth}
	\begin{equation}
		\label{eq:mergeSignChange}
		\includeCob{S02F032}
	\end{equation}
\end{minipage}
\caption{Default orientations used in pictures}\label{fig:defaultOrientations}
\end{figure}

To define the target category of the generalized Khovanov bracket, we need
to replace $\chcobIII$ by a
$\Bbbk$-linear category $\cobIIIgen$, which has the same objects as $\chcobIII$, and
where $\Bbbk$ denotes the ring
\[
\Bbbk:=\mathbb{Z}[\pi]/(\pi^2-1)
\]
from the introduction.\footnote{In \cite{Pu2015} generalized Khovanov homology was developed over the ring
$\ZZ[X,Y,Z^{\pm1}]/(X^2=Y^2=1)$ 
which can be specialized to odd or even Khovanov homolgy by combinations of setting $X$, $Y$, and $Z$ to $1$ or $-1$.
Our ring $\Bbbk$ can be seen as a specialization of this larger ring for $X=Z=1$ and $Y=\pi$.
In \cite{PL2016}, our ring $\Bbbk$ was denoted $\ZZ_\pi$, and it was shown that the generalized
Khovanov bracket over $\ZZ_\pi$ carries the same information as the original generalized Khovanov
bracket from \cite{Pu2015}. Note that setting $\pi=1$ or $\pi=-1$ in the generalized theory results
in even or odd Khovanov homology, respectively.
}
The morphisms in $\cobIIIgen$
are given by formal $\Bbbk$-linear combinations of morphisms in $\chcobIII$,
where we define the composition in $\cobIIIgen$ bilinearly using the composition
in $\chcobIII$. To arrive at $\cobIIIgen$, we also mod out by the following relations,
where we use Convention~\ref{conv:2:orientations} in cases where
local orientations are not indicated:

\subsubsection*{Orientation Reversal Relations}

\noindent\begin{minipage}[b]{0.333\textwidth}
	\begin{equation}
		\label{eq:deathSignChange}
		\includeCob{S02F003}
	\end{equation}
\end{minipage}%
\begin{minipage}[b]{0.334\textwidth}
	\begin{equation}
		\label{eq:splitSignChange}
		\includeCob{S02F004}
	\end{equation}
\end{minipage}%
\begin{minipage}[b]{0.333\textwidth}
	\begin{equation}
		\label{eq:mergeSignChange}
		\includeCob{S02F005}
	\end{equation}
\end{minipage}

\subsubsection*{Frobenius and Associativity Relations}

{\noindent}\begin{minipage}{0.333\textwidth}
    \begin{equation}
        \includeCob{S02F009}
    \end{equation}
\end{minipage}%
\begin{minipage}{0.334\textwidth}
	\begin{equation}
		\includeCob{S02F010}
	\end{equation}
\end{minipage}%
\begin{minipage}{0.333\textwidth}
    \begin{equation}
        \includeCob{S02F011}
    \end{equation}
\end{minipage}

\subsubsection*{Commutativity Relations}

{\noindent}\begin{minipage}{0.333\textwidth}
    \begin{equation}
        \includeCob{S02F006}
    \end{equation}
\end{minipage}%
\begin{minipage}{0.334\textwidth}
    \begin{equation}
        \includeCob{S02F007}
    \end{equation}
\end{minipage}%
\begin{minipage}{0.333\textwidth}
    \begin{equation}
        \includeCob{S02F008}
    \end{equation}
\end{minipage}

{\noindent}\begin{minipage}{0.333\textwidth}
    \begin{equation}
        \includeCob{S02F012}
    \end{equation}
\end{minipage}%
\begin{minipage}{0.334\textwidth}
	\begin{equation}
		\includeCob{S02F013}
	\end{equation}
\end{minipage}%
\begin{minipage}{0.333\textwidth}
    \begin{equation}
        \includeCob{S02F014}
    \end{equation}
\end{minipage}

{\noindent}\begin{minipage}{0.5\textwidth}
    \begin{equation}
        \includeCob{S02F015}
    \end{equation}
\end{minipage}%
\begin{minipage}{0.5\textwidth}
    \begin{equation}
        \includeCob{S02F016}
    \end{equation}
\end{minipage}

{\noindent}\begin{minipage}{0.5\textwidth}
    \begin{equation}
        \includeCob{S02F017}
    \end{equation}
\end{minipage}%
\begin{minipage}{0.5\textwidth}
    \begin{equation}
        \includeCob{S02F018}
    \end{equation}
\end{minipage}

\subsubsection*{Cross Relations}

{\noindent}\begin{minipage}{0.5\textwidth}
    \begin{equation}
        \includeCob{S02F019}
    \end{equation}
\end{minipage}%
\begin{minipage}{0.5\textwidth}
    \begin{equation}
        \includeCob{S02F020}
    \end{equation}
\end{minipage}\\

\subsubsection*{Diamond Relation}

\begin{equation}
    \includeCob{S02F021}
\end{equation}

\subsubsection*{Pruning Relations}

\begin{equation}
	\includeCob{S02F022}
\end{equation}

\subsubsection*{Sphere and Torus Relations}

{\noindent}\begin{minipage}{0.5\textwidth}
	\begin{equation}
		\includeCob{S02F023}
	\end{equation}
\end{minipage}%
\begin{minipage}{0.5\textwidth}
	\begin{equation}
		\includeCob{S02F024}
	\end{equation}
\end{minipage}

\subsubsection*{Four Tube Relation}

\begin{equation} 
	\includeCob{S02F025}
\end{equation}\\

\begin{remark}
	The cobordisms in this paper are always embedded. However, the above pictures
show non-embedded versions of the relations and should be interpreted as providing
all possible ways in which the relation can be embedded into $\mathbb{R}^2\times I$.
For example, the two circles at the top of the split saddle in the second orientation reversal
relation could be nested within one another.
\end{remark}

\begin{remark}
In \cite{Pu2015}, Putyra works with non-embedded cobordisms except for a brief period where
it is strictly necessary that he work with embedded cobordisms.
Working with non-embedded cobordisms simplifies the presentation of the relations.
\end{remark}

\begin{remark}
The associativity relations are so named because they are reminiscent of the
associativity and coassociativity relations that hold in a bialgebra.
In contrast, the commutativity relations are concerned with the effect of vertically
commuting critical points past each other. The sphere, torus, and four tube
relations are essentially Bar-Natan's relations from \cite{Bn2005},
where they are called the (S), (T), and (4Tu) relation, respectively.
\end{remark}

\begin{remark} 
The two cross relations are not strictly needed because they can be deduced
from the orientation change relations. In fact,
the cobordisms that appear on the two sides of these
relations are ambient isotopic via an ambient isotopy that fixes 
the heights of the critical points, while reversing the local orientation at one
of the critical points.
For similar reasons, the diamond relation would not be needed
if we were working in a non-embedded setting.
We will see below that the cobordisms that appear in the diamond relation
remain unchanged under multiplication by $\pi$. The factor
of $\pi$ on the right-hand side could thus be omitted.
\end{remark}

Following \cite{Pu2015}, we define the chronological degree of a cobordism $S$ as the pair
$(a,b)\in\mathbb{Z}\times\mathbb{Z}$ where
\[
a:=\#\mbox{births}-\#\mbox{merges}\qquad\mbox{and}\qquad
b:=\#\mbox{deaths}-\#\mbox{splits}.
\]
All of the relations in  $\cobIIIgen$ are homogenous with respect
to this degree, and thus the chronological degree gives rise to
a $(\mathbb{Z}\times\mathbb{Z})$-grading on morphisms sets of $\cobIIIgen$.
By its definition,
 the chronological degree is additive under
compositions of cobordisms and under the disjoint union operations considered
in \cite{Pu2015}. 
For $(a,b)$ as above, \defw{quantum degree} of $S$ is defined as the sum $a+b\in\mathbb{Z}$
and the \defw{superdegree} as the reduction of $b$ modulo $2$.
Accordingly, we will say that  $S$ is even or odd depending on the parity of $b$.
Note that births and merges are even, while deaths and splits are odd.

The commutativity relations imply that the superdegree determines
a cobordism's commutativity behavior, in the following sense.
Suppose $S_1$ and $S_2$ are two cobordisms in $\mathbb{R}^2\times I$
which have disjoint projections to $\mathbb{R}^2$. Suppose further that all critical points
in $S_1$ initially occur at lower heights than the ones in $S_2$, and let $S_1'$
and $S_2'$ be vertically shifted versions of $S_1$ and $S_2$ such that the
critical points in $S_1'$ occur at greater heights than the ones in $S_2'$.
Then $S_1\cup S_2=S_1'\cup S_2'$ if one of $S_1$ and $S_2$ is even,
and $S_1\cup S_2=\pi S_1'\cup S_2'$ if $S_1$ and $S_2$ are both odd.

\begin{definition}
	We will say that two cobordisms or morphisms
	are \defw{$\pi$-commuting} if exchanging their order induces multiplication by $\pi$. 
\end{definition}

\subsection{Emergent Relations}

One particular consequence of the relations described above is the handle
(H) relation:
\begin{equation}
	\includeCob{S02F026}
\end{equation}
If the handle cobordism is embedded in $\mathbb{R}^2\times I$ as shown above, then this relation follows from the orientation reversal relations because
we can reverse the local orientation at the split saddle by rotating
the cobordism by $180^\circ$ about a
vertical axis (the resulting isotopy can be chosen to be the
identity near the boundaries). The (H) relation also holds for different
embeddings of the handle because one use the diamond relation to switch between different
embeddings. Note that the (H) relation implies that a handle
is annihilated by $1-\pi$. On the other hand, the following result from \cite{Pu2015}
tells us that this cannot happen for genus zero cobordisms without closed components:

\begin{lemma}[\cite{Pu2015}] Suppose $kS=0$ for a chronological cobordism and a nonzero $k\in\Bbbk$.
Then $S$ has either positive genus or closed components, and $k$ is
divisible by $1-\pi$.
\end{lemma}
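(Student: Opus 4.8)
The plan is to show that if $kS = 0$ in $\cobIIIgen$ for some nonzero $k \in \Bbbk$, then $S$ must contain ``topological bulk'' — either positive genus or a closed component — and moreover that $k \in (1-\pi)$. The strategy is to produce a nonzero $\Bbbk$-linear evaluation functor (a ``TQFT'') on $\cobIIIgen$ that detects genus-zero cobordisms without closed components, so that such an $S$ cannot be annihilated by a nonzero scalar. Concretely, I would first reduce to the case where $S$ is connected and has no closed components: since the chronological degree is additive under disjoint union and all relations are homogeneous, a relation $kS=0$ descends to the component containing the ``interesting'' part, and if every component is a disk or a tube (genus zero, boundary on both ends or capping off) we must rule out annihilation. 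A connected genus-zero cobordism from $a$ circles to $b$ circles with no closed components is, up to the relations, built from a standard normal form: some sequence of merges bringing the $a$ input circles together, then some sequence of splits producing the $b$ output circles, i.e. $S$ is equivalent (by associativity, commutativity, and the pruning relations) to a scalar multiple of the ``tree-like'' cobordism $\Delta^{b-1}\circ m^{a-1}$ (suitably interpreted when $a$ or $b$ is $0$, using births/deaths). The scalar here is a unit (a power of $\pm 1$ or $\pm\pi$), since the only relations used are orientation reversals and commutativity, which only introduce units.

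Next I would exhibit the detecting functor. Send the circle to the free rank-two $\Bbbk$-module $A = \Bbbk\{1, x\}$, with unit $\iota(1)=1$, counit $\varepsilon(1)=0,\ \varepsilon(x)=1$ (clockwise death; the counterclockwise death is $\pi\varepsilon$ by orientation reversal), multiplication the usual $1\cdot 1 = 1$, $1\cdot x = x\cdot 1 = x$, $x\cdot x = 0$, and comultiplication $\Delta(1) = 1\otimes x + x\otimes 1$, $\Delta(x) = x\otimes x$ — the odd/generalized Frobenius structure of \cite{Pu2015}, which is known to be a well-defined functor on $\cobIIIgen$ (all the listed relations are satisfied; this is exactly Putyra's construction). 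Under this functor the normal-form cobordism $\Delta^{b-1}\circ m^{a-1}$ acts as a \emph{nonzero} $\Bbbk$-linear map: e.g. the generator $1^{\otimes a}$ maps (up to a unit) to $1^{\otimes b} \otimes x + \cdots \neq 0$, and more to the point its matrix has a unit entry, so it is not killed by any nonzero $k\in\Bbbk$ (here one uses that $\Bbbk = \ZZ[\pi]/(\pi^2-1)$ has no zero divisors annihilating units — a nonzero $k$ times a unit is nonzero). The case $a=b=0$, a genus-zero sphere, is handled by the sphere relation, which evaluates it to $0$, but a sphere \emph{is} a closed component, so it is excluded from the hypothesis; the disk $\emptyset \to$ circle maps $1 \mapsto 1 \neq 0$, again undestroyed. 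This contradiction establishes the first assertion: $S$ has positive genus or a closed component.

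For the divisibility claim, I would argue that once $S$ has positive genus or a closed component, the relation $kS = 0$ can be pushed, via the (H) relation, the handle/torus/sphere relations, and the four-tube relation, to a relation of the form $k \cdot T = 0$ where $T$ is a ``standard'' generator — a handle attached to a disk, a standalone torus, or a standalone sphere — together with (possibly) a genus-zero remainder that the evaluation functor above shows is non-annihilable unless $k$ already does the job. A handle is annihilated precisely by $1-\pi$ (the (H) relation gives $H = \pi H$, and Putyra's computation shows the annihilator is exactly the ideal $(1-\pi)$, not larger, since modulo any prime not dividing content the handle is nonzero after inverting $\pi$'s relation); so if $kS = 0$ genuinely uses the handle relation we get $k \in (1-\pi)$, and if it does not, the functorial evaluation forbids $kS=0$ for $k \neq 0$. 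I expect the main obstacle to be the bookkeeping in this last step: carefully showing that a relation $kS = 0$ in the presentation of $\cobIIIgen$ can be \emph{normalized} so that the only genuinely non-invertible relation it invokes is the handle relation, which amounts to understanding the syzygies among the defining relations — essentially re-deriving the structure theorem behind Putyra's lemma. A cleaner route, which I would pursue if the syzygy analysis gets unwieldy, is to enlarge the target: tensor the above Frobenius TQFT with the one-dimensional theories that send a handle to $2$ (over $\ZZ[1/2]$ after setting $\pi=1$) versus to $0$, combine them to build, for each prime $p$ and each specialization of $\pi$, a functor that is faithful enough on the relevant generators to pin down the annihilator ideal as exactly $(1-\pi)$.
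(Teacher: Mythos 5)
This lemma is taken from \cite{Pu2015} with a citation; the present paper does not prove it, so there is no internal argument to compare your proposal against. I will therefore only assess the proposal itself.

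\textbf{First conclusion (genus zero, no closed components, implies no torsion).} Your strategy — reduce to a normal form and then hit it with the generalized Frobenius TQFT $F_{gen}$ — is sound in outline, and the specific computation that the map induced by $\Delta^{b-1}\circ m^{a-1}$ has a unit matrix entry is correct (e.g.\ $1^{\otimes a}\mapsto 1\mapsto R_1+\pi R_2\mapsto\cdots$, all unit coefficients). But the normal-form step is the actual content and you should not leave it as an assertion. You need a lemma of the sort: \emph{every connected chronological cobordism in $\mathbb{R}^2\times I$ of genus zero with no closed components is, in $\cobIIIgen$, an invertible-scalar multiple of a fixed tree-like cobordism with the same boundary}, and you need to verify that the reduction uses only the unit-scalar relations (orientation reversal, commutativity/$\pi$-commutativity, associativity, diamond, pruning). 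This is plausible, but it is a claim about embedded surfaces up to chronological isotopy and the listed relations, and it does not follow from the abstract Morse decomposition alone; it is where a careful proof would spend its effort. Without it, the evaluation-functor argument does not get off the ground, because one has no control on what $F_{gen}(S)$ looks like for an arbitrary $S$.

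\textbf{Second conclusion (divisibility of $k$ by $1-\pi$).} Here you have a genuine gap, and you say so yourself. Both alternatives you sketch — analyzing syzygies in the presentation, or assembling a zoo of evaluation functors indexed by primes and specializations — are more machinery than is needed and neither is carried out. The clean observation you are missing is that divisibility by $1-\pi$ in $\Bbbk=\mathbb{Z}[\pi]/(\pi^2-1)$ is exactly vanishing in the quotient ring $\Bbbk/(1-\pi)\cong\mathbb{Z}$, i.e.\ under the specialization $\pi\mapsto 1$. Under $\pi\mapsto 1$ the category $\cobIIIgen$ collapses to (an embedded version of) Bar--Natan's even cobordism category over $\mathbb{Z}$, so the statement ``$kS=0$ forces $k\in(1-\pi)$'' is equivalent to: \emph{the image $\bar S$ of any chronological cobordism in the even Bar--Natan category over $\mathbb{Z}$ is not annihilated by any nonzero integer} (provided $\bar S\neq 0$ there, which fails only when $S$ has a sphere component — a degenerate case one must address explicitly, since then $kS=0$ holds for every $k$). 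That in turn is a known freeness/torsion-freeness statement about the even category, reducible to delooping and Gaussian elimination, and it handles all genera and closed components in one stroke. Your proposal never isolates this reduction; instead it tries to locate ``where the handle relation was used,'' which is exactly the kind of presentation-dependent bookkeeping that the quotient trick is designed to avoid. I would restructure the second half of the argument around this specialization rather than the syzygy analysis.

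Finally, be careful with the edge case noted above: as literally stated the lemma is false for cobordisms with a sphere component (since then $S=0$ and $kS=0$ for all $k$, but $k$ need not lie in $(1-\pi)$). Any correct proof, including yours, has to either exclude this case or observe that the intended reading is $S\neq 0$ in $\cobIIIgen$.
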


The following consequences of the (4Tu) relation will be used later:

{\noindent}\begin{minipage}{0.5\textwidth}
	\begin{equation}
		\includeCob{S02F027}
	\end{equation}
\end{minipage}%
\begin{minipage}{0.5\textwidth}
	\begin{equation}
		\includeCob{S02F028}
	\end{equation}
\end{minipage}

\begin{equation}
	\includeCob{S02F029}
\end{equation}

\subsection{The Target Category: $\textnormal{Kom}^b(\mat{\cobIIIgen})$}

Given a preadditive category $\mathcal{A}$, we will denote by
 $\mat{\mathcal{A}}$ its additive closure. The objects of $\mat{\mathcal{A}}$ are finite (possibly empty) sequences of objects in $\mathcal{A}$, and its morphisms are given by matrices of morphisms in $\mathcal{A}$.  The composition is modeled on ordinary matrix multiplication.

If $\mathcal{A}$ is already additive and $\Bbbk$-linear, then we will denote by $\textnormal{Kom}^b(\mathcal{A})$ the
category of bounded chain complexes and chain maps in $\mathcal{A}$. Let $\textnormal{Kom}^b_h(\mathcal{A})$
denote the corresponding homotopy category in which homotopic chain maps are identified.
Moreover, let $\textnormal{Kom}^b_{\pm h,\pm\pi h}(\mathcal{A})$ denote the ``projectivization'' of 
$\textnormal{Kom}^b_h(\mathcal{A})$ in which two homotopy classes
are identified if they differ by a global invertible scalar $s\in\Bbbk^\times=\{\pm 1,\pm\pi\}$.
Note that the morphism sets in this category are no longer abelian groups or $\Bbbk$-modules, but they still
carry a multiplicative action of the multiplicative semigroup $\Bbbk/\Bbbk^\times$.

The target category for the generalized Khovanov bracket is now given by the category
$\textnormal{Kom}^b_{\pm h,\pm\pi h}(\mat{\cobIIIgen})$, and it is in this category that
the generalized Khovanov bracket is functorial. It is often simpler
to work with $\textnormal{Kom}^b(\mat{\cobIIIgen})$, in which the formal Khovanov
bracket is only functorial up to homotopy and invertible scalars.

\begin{remark}
	The category where the odd Khovanov bracket resides is obtained by setting $\pi=-1$
	in the definition of $\cobIIIgen$, and otherwise using the same definitions as above.
	Similarly, the target category for the even Khovanov bracket
	is obtained by setting $\pi=1$.
\end{remark}

\subsection{Type X and Type Y Configurations}

From the diamond relation, there emerges a pair of exceptional arrangements which
are both commuting and $\pi$-commuting because the involved cobordisms are annihilated by $1-\pi$.
These two configurations are called type X and type Y and are shown in Figure~\ref{fig:xytype}.

\begin{figure}[H]
    \centering
    \includeCob{S02F030}
    \caption{Cobordisms of type X and type Y configurations}
    \label{fig:xytype}
\end{figure}

Note that Figure~\ref{fig:xytype} correctly reflects the embeddings of the shown cobordisms into the oriented
manifold $S^2\times I\supset\mathbb{R}^2\times I$. In fact, the two types can be transformed into
each other by applying an orientation-reversing homeomorphism of the plane $\mathbb{R}^2$.

In order to define a generalized Khovanov theory, one must artificially decide that one of these configurations commutes and that the other $\pi$-commutes. 
By consequence, there are two different generalized Khovanov theories, which are called type X or type Y
based on which pair of cobordisms
in Figure~\ref{fig:xytype} $\pi$-commutes.
It was first observed in \cite{ORS2007} that either choice produces the same final invariant, although the proof given there was incorrect.  Putyra in \cite{Pu2015} outlined a corrected proof, but his
argument is not clear to us and does not address the functoriality of the type X and type Y theories under link cobordisms.
This will be done in Lemma \ref{lem:XY4} in Subection~\ref{subs:XYequivalence}, where we will show that
the type X and type Y theories provide naturally isomorphic functors. For the most part of this
paper, we will use type Y sign assignments following Putyra \cite{Pu2015}.

\section{Generalized Khovanov Theory}\label{s:generalized}

We will now construct the generalized Khovanov bracket of an oriented link diagram $D$.  Then we will review portions of the proof of invariance, as they will be needed to extend the generalized Khovanov bracket to link cobordisms and to prove that it is functorial.  We will end this section by defining the chain maps that are assigned to each of the elementary cobordisms in $\cobIV$.

The generalized Khovanov bracket is not constructed from a simple oriented link diagram, but
from an oriented link diagram that has been enhanced with a local orientation at each of its crossings.  Each crossing has two possible orientations, which are displayed in Figure \ref{fig:orientedCrossing} and represented by arrows.

\begin{figure}[H]
    \centering
    \includeTang{S03F001}
    \caption{The two possible orientations on a crossing}
    \label{fig:orientedCrossing}
\end{figure}

Note that the arrows specifying the crossing orientation are chosen so that they connect the regions that lie to the left if one approaches the crossing along the overstrand from either side.
In the following, we will assume that link diagrams are oriented and equipped with orientations on their crossings,
unless otherwise stated.

\subsection{Crossing Resolution}

A link diagram $D$ with $n$ crossings gives rise to $2^n$ planar diagrams corresponding to all possible combinations of 
replacing each crossing with 
the vertical \defw{$0$-resolution} or the horizontal \defw{$1$-resolution}. Here the terms
\enquote{vertical} and \enquote{horizontal} refer to the pictures in Figures~\ref{fig:orientedCrossing} and \ref{fig:resolutions}.  
In intrinsic terms, the $0$-resolution is the resolution whose arcs veer to the left as one approaches the
original crossing along the overstrand from either side, and the $1$-resolution is the resolution whose arcs veer to the right.

If the crossings of $D$ are labeled from 1 to $n$, then each resolved diagram, $D_\alpha$, can be assigned a binary label $\alpha\in\{0,1\}^n$ with a 0 in the $i$\ordth place if the $i$\ordth crossing was replaced with the $0$-resolution, and a 1 if it was replaced with the $1$-resolution.  

\begin{figure}[H]
    \centering
    \captionsetup{width=\textwidth}
    \includeFig{S03F006}
     \caption{The two resolutions of a crossing}
    \label{fig:resolutions}
\end{figure}

\begin{convention}
    To denote that a
    digit in a binary string is in a \enquote{superposition} of a $0$ and a $1$, we use a $\star$ for that digit.
\end{convention}
We assign a cobordism $d_{{\dots}\star{\dots}}:D_{{\dots}0{\dots}}\rightarrow D_{{\dots}1{\dots}}$ to each pair of resolved diagrams that differ at a single crossing $c$ of $D$.  
Specifically, $d_{{\dots}\star{\dots}}$ is a saddle cobordism from the diagram $D_{{\dots}0{\dots}}$, in which $c$ is
replaced by a $0$-resolution, the diagram $D_{{\dots}1{\dots}}$, in which $c$ is replaced by a 1-resolution
(as depicted in Figure \ref{fig:resolutions}).  The saddle point in $d_{{\dots}\star{\dots}}$ inherits a local orientation from the orientation
of the resolved crossing $c$,
and thus $d_{{\dots}\star{\dots}}$ be viewed as a morphism in $\cobIIIgen$.

\begin{convention}
    Greek characters will be used to denote binary strings and the concatenation of Greek characters should be read as the concatenation of the binary strings.  Furthermore, $\zeta$ will be used to denote the string consisting of only zeros.
\end{convention}

\begin{definition}
    The \defw{degree} of a resolution is the number of $1$-resolutions in the resulting diagram.
\end{definition}

The degree of a resolution is thus the number of ones in the diagram's binary label, and it is denoted $\deg(\alpha)$.

\subsubsection{The Generalized Khovanov Bracket}

\begin{definition}
    The \defw{cube of resolutions} of a link diagram $D$ with $n$ crossings is an $n$-dimensional cube with the $2^n$ planar diagrams $D_\alpha$ for $\alpha\in\{0,1\}^n$ as its vertices and the corresponding cobordisms as its edges.
\end{definition}

Each 2-dimensional face of the cube of resolutions
corresponds to a sequence in $\{0,1,\star\}^n$ with exactly two $\star$'s, and thus
corresponds to an (unoriented) link diagram $D_{\dots{\star}\dots{\star}\dots}$
in which all but two of the crossings of $D$ are resolved.
In order to define a chain complex, we must make sure that all 2-dimensional
faces of the cube of resolutions anticommute, so that the
resulting differential squares to zero.

Each 2-dimensional face falls in one of a few basic types,
which are
shown in Figure~\ref{fig:commutativitytypes}.
In this figure, we are actually depicting the 2-crossing link diagrams $D_{\dots{\star}\dots{\star}\dots}$
corresponding to 2-dimensional faces. In the cases where we have
drawn a tangle diagram instead of a link diagram, the convention is that the tangle diagram
can be closed up with any crossingless tangle that connects
oppositely decorated endpoints.
In diagrams where we have drawn crossing orientations, it is understood
that one either has to choose the single arrows at both crossings, or the double arrows
at both crossings.
Moreover, all link diagrams in Figure~\ref{fig:commutativitytypes}
are to be considered up to planar isotopy.
As indicated in the figure, each face has an element $\sigma_{\dots{\star}\dots{\star}\dots}\in\{1,\pi\}$ assigned
to it that depends on whether the face is naively commuting or $\pi$-commuting.

\begin{figure}
\centering
\caption{Commutativity of faces for type $Y$ sign assignments}\label{fig:commutativitytypes}
\rule[0pt]{\textwidth}{1pt}\vspace{0.5em}
Commuting Faces (Group C) --- $\sigma_{\dots{\star}\dots{\star}\dots} = 1$\vspace{1em}

\hfill\includeTang{S03F050}\hfill\includeTang{S03F051}\hfill\phantom{.}\vspace{1em}

\includeTang{S03F052} or \includeTang{S03F053}\vspace{1em}

\hfill\includeTang{S03F054}\hfill\includeTang{S03F055}\hfill\phantom{.}\vspace{0.5em}
\rule[0pt]{\textwidth}{1pt}\vspace{0.5em}
$\pi$-commuting Faces (Group $\pi$) --- $\sigma_{\dots{\star}\dots{\star}\dots} = \pi$\vspace{1em}

\hfill\includeTang{S03F058}\hfill\includeTang{S03F059}\hfill\phantom{.}\vspace{1em}

\includeTang{S03F060} or \includeTang{S03F061}\vspace{0.5em}
\rule[0pt]{\textwidth}{1pt}\vspace{0.5em}
X Face (Group X) --- $\sigma_{\dots{\star}\dots{\star}\dots} = 1$\vspace{1em}

\includeTang{S03F056} or \includeTang{S03F057}\vspace{0.5em}
\rule[0pt]{\textwidth}{1pt}\vspace{0.5em}
Y Face (Group Y) --- $\sigma_{\dots{\star}\dots{\star}\dots} = \pi$\vspace{1em}

\includeTang{S03F062} or \includeTang{S03F063}\vspace{0.5em}
\rule[0pt]{\textwidth}{1pt}
\end{figure}

Note that in Figure~\ref{fig:commutativitytypes}, we have depicted the conventions for type Y assignments, where faces of type X
are considered to be naively commuting, while faces of type Y are considered to be naively $\pi$-commuting.
For type X assignments, the latter conventions would be reversed.

To make squares in the cube of resolutions anticommute,
we need to make an assignment of an element $\epsilon_{\dots{\star}\dots}\in\Bbbk^\times=\{\pm 1,\pm\pi\}$ to each edge of the cube such that

\begin{equation}\label{eq:3:validFace}
    \displaystyle\prod_{\{\alpha_i,\alpha_j\}\in\{\{0,{\star}\},\{1,{\star}\},\{{\star},0\},\{{\star},1\}\}}\epsilon_{\dots{\alpha_i}\dots{\alpha_j}\dots} = -\sigma_{\dots{\star}\dots{\star}\dots}
\end{equation}%

Such a sign assignment can always be made (\cite{ORS2007},\cite{Pu2015}), but it is not unique.
This will be discussed in more detail in Subsection~\ref{subs:signs} below.

\begin{definition}
    The \defw{generalized Khovanov cube} of $D$ is the cube of resolutions with each edge
    map $d_{\dots{\star}\dots}$ multiplied by its assigned element
    $\epsilon_{\dots{\star}\dots}\in\{\pm 1,\pm\pi\}$.
\end{definition}

To obtain the generalized Khovanov bracket, we now \enquote{flatten}
the generalized Khovanov cube to a chain complex.
The $i$\ordth chain object in this complex is the formal direct sum
\[
\llbracket D\rrbracket^i:=\bigoplus_{\deg(\alpha)=i+n_-} D_\alpha,
\]
viewed as an object of $\mat{\cobIIIgen}$, where
$n_-$ denotes the number of negative crossings in $D$ (defined as in Figure~\ref{fig:2:crossingDirection}).
The $i$\ordth differential $\partial^i\colon\llbracket D\rrbracket^i\rightarrow\llbracket D\rrbracket^{i+1}$ 
is given by all edge maps
$\epsilon_{\dots{\star}\dots}d_{\dots{\star}\dots}\colon D_\alpha\rightarrow D_\beta$
in the generalized Khovanov cube
that start at a vertex $\alpha$ with $\deg(\alpha)=i+n_-$.
Note that we interpret the resulting generalized Khovanov bracket as an object of the category $\kom{\mat{\cobIIIgen}}$.

\begin{figure}[H]
	\centering
	\includeFig{S02F050}
	\caption{Positive and negative crossings}
		\label{fig:2:crossingDirection}
\end{figure}

\begin{convention}
    The underlying saddle cobordisms in the generalized Khovanov cube will be denoted with $d$ while the edge maps $\epsilon_{\ldots{\star}\ldots}d_{\ldots{\star}\ldots}$ will be denoted with $\partial$.
\end{convention}

\begin{convention}
    For an oriented link diagram $D$ with chosen crossing orientations, the generalized Khovanov bracket will be denoted $\llbracket D\rrbracket$. Additionally, the diagram at vertex $\alpha$ of the cube of resolutions will be denoted by $\llbracket D\rrbracket_\alpha$, so that $\llbracket D\rrbracket_\alpha:=D_\alpha$.
\end{convention}

Note that since all of the $d_{\ldots{\star}\ldots}$ are saddle cobordisms, the differential $\partial^i$ in the generalized
Khovanov bracket a priori has quantum degree $-1$. One can achieve that each $\partial^i$ has quantum degree zero
by formally raising the quantum grading of each $\llbracket D\rrbracket^i$ by $i+n_+-n_-=i+w(D)$ where 
$n_-$ denotes the number of negative crossings (as before), $n_+$ denotes the number of positive crossings, 
and $w(D):=n_+-n_-$ denotes the writhe of $D$.
This construction is described in detail in \cite{Bn2005}
and also ensures that the chain map induced by a link cobordism
is homogeneous of quantum degree given by the Euler characteristic of the link cobordism.
In this paper, we will not pursue this construction any further, but we will
describe an analogous construction, for the supergrading, in the next subsection.

\subsection{Supergraded Refinement}\label{subs:supergraded}

Recall that a $\Bbbk$-linear category is called a \defw{supercategory} if
its morphism sets are $\mathbb{Z}_2$-graded $\Bbbk$-modules
such that the $\mathbb{Z}_2$-grading is additive under composition of morphisms.
A supercategory $\mathcal{C}$ can be extended to a \enquote{refined} supercategory $\mathcal{C}^r$
in which objects come with formal grading shifts.
The objects in the refined supercategory $\mathcal{C}^r$ are symbols $x\{m\}$ with $x\in\mathcal{C}$
and $m\in\mathbb{Z}_2$, and the morphism sets are given by
\[
\operatorname{Hom}(x\{m\},y\{n\}):=\operatorname{Hom}(x,y),
\]
but with the $\mathbb{Z}_2$-grading shifted by $n-m$. Thus, if a morphism $f$
has degree $d\in\mathbb{Z}_2$ in $\operatorname{Hom}(x,y)$, then the same morphism has degree $d+n-m$
when viewed as a morphism in $\operatorname{Hom}(x\{m\},y\{n\})$.

The category $\cobIIIgen$ can be viewed as a supercategory with $\mathbb{Z}_2$-grading given by the superdegrading from Subsection~\ref{subs:targetcategory}.
Working in the refinement of $\cobIIIgen$, we can now define a \defw{refined generalized Khovanov bracket}.
This refined Khovanov bracket is obtained by replacing each $\llbracket D\rrbracket_\alpha=D_\alpha$
by $D_\alpha\{s_\alpha\}$, where $s_\alpha$ is the modulo 2 reduction of

\begin{equation}\label{eq:3:Smap}
S(D,\alpha):=\frac{c(D,\alpha)+\deg(\alpha)+n_+ - 2n_- -|D|}{2}.
\end{equation}

In this formula, $c(D,\alpha)$ denotes the number of circles in $D_\alpha$
and $|D|$ the number of components in the link represented by $D$.
The function $S(D,-)$ is defined such that across a merge differential in the cube of resolutions of $D$,
the increase in $\deg(\alpha)$ and the decrease in $c(D,\alpha)$ cancel out, while across
a split differential, they add up to increment $S(D,\alpha)$ by $1$.
As a consequence, the differentials in the refined generalized Khovanov bracket have
superdegree $0$.
The term
$n_+-2n_--|D|$ in~\eqref{eq:3:Smap} ensures that $S(D,\alpha)$ is an integer.

We note that there would be other (inequivalent) choices for the term $n_+-2n_--|D|$.
In particular, one could replace it by $\pm C(D,\beta)$ for any fixed resolution $\beta$,
such as the all-zero resolution.
However, our choice used in~\eqref{eq:3:Smap} is more natural because it will ensure
that the chain maps induced by oriented link cobordisms are homogeneous
of the correct superdegree (see Remark~\ref{rem:cobordismsuperdegree} below).
In this context, we point out that
the quantity $\deg(\alpha)+n_+ - 2n_-$ in equation \eqref{eq:3:Smap} is equal to the
shift of the quantum grading mentioned at the end of the previous subsection, or to
$i+w(D)$ where $i$ is the cohomological grading and $w(D)$ is the writhe.

In what follows, we 
will usually ignore the shifts of the supergrading, but invoke them whenever
it helps simplify our proofs.
Note that this approach will not limit the generality of our results because
any result proven for the refined generalized Khovanov bracket can be
translated back to the non-refined setting by applying a forgetful functor.
In the remainder of this subsection, we will discuss some more terminology
that will later be used in the refined context.

We recall that a $\Bbbk$-linear functor
between two supercategories is called a \defw{superfunctor}
if it preserves the $\mathbb{Z}_2$-grading. Let $\mathcal{F}$ and $\mathcal{G}$
be two superfunctors between two supercategories $\mathcal{C}$ and $\mathcal{D}$ and $\eta=\{\eta_x\}_{x\in\mathcal{C}}$ be
a collection of morphisms $\eta_x\colon\mathcal{F}(x)\rightarrow\mathcal{G}(x)$ in $\mathcal{D}$, one for
each object $x\in\mathcal{C}$. Assume that all $\eta_x$ are homogeneous of the same $\mathbb{Z}_2$-degree.
We will call such a collection a \defw{$\pi$-supernatural transformation}
if it satisfies
\begin{equation}\label{eqn:supernaturaldefinition}
\eta_y\circ \mathcal{F}(f)=\pi^d \mathcal{G}(f)\circ\eta_x
\end{equation}
for every homogeneuos morphism $f\colon x\rightarrow y$
in $\mathcal{C}$ of $\mathbb{Z}_2$-degree $d$. 
Note that this terminology is slightly different from the one used in \cite{BrundanEllis},
where a supernatural transformation is assumed to split into an even and an odd
component (with the two components behaving differently with respect to homogenous morphisms $f$).
We note that any superfunctor $\mathcal{F}$ between two supercategories $\mathcal{C}$ and $\mathcal{D}$ extends
to the refinements of these supercategories via $\mathcal{F}(x\{m\}):=\mathcal{F}(x)\{m\}$, and any
$\pi$-supernatural transformation $\eta$ between two superfunctors $\mathcal{F}$ and $\mathcal{G}$
extends to a $\pi$-supernatural transformation between the extended superfunctors
via $\eta_{x\{m\}}:=\pi^m\eta_x\colon\mathcal{F}(x)\{m\}\rightarrow\mathcal{G}(x)\{m\}$.

Given a supercategory $\mathcal{C}$,
let $\operatorname{Kom}'(\mathcal{C})$ denote the supercategory
whose objects are chain complexes in $\mathcal{C}$ with homogeneous differentials of $\mathbb{Z}_2$-degree zero,
and whose morphisms are (not necessarily homogenous) chain maps.
Any superfunctor $\mathcal{F}$ between two supercategories $\mathcal{C}$ and $\mathcal{D}$
induces a superfunctor between the supercategories $\operatorname{Kom}'(\mathcal{C})$ and $\operatorname{Kom}'(\mathcal{D})$,
where the induced superfunctor is given by applying $\mathcal{F}$ \enquote{termwise}.
Moreover, any $\pi$-supernatural transformation $\eta$ between two superfunctors
$\mathcal{F},\mathcal{G}\colon\mathcal{C}\rightarrow\mathcal{D}$ extends
to a $\pi$-supernatural transformation between the induced
superfunctors $\operatorname{Kom}'(\mathcal{C})\rightarrow\operatorname{Kom}'(\mathcal{D})$.
Explicitly, let $C=(\ldots \rightarrow C^i\rightarrow C^{i+1}\rightarrow\ldots)$
be a chain complex in $\mathcal{C}$ with differentials of $\mathbb{Z}_2$-degree zero. Then
the morphism $\eta_C$ of the extended $\pi$-supernatural transformation
is given by the chain map with components $\eta_{C^i}\colon\mathcal{F}(C^i)\rightarrow\mathcal{G}(C^i)$.

\subsection{Odd and Generalized Khovanov TQFTs}\label{subs:TQFTs}

There is a chronological TQFT functor $F\colon\cobIIIgen\rightarrow\mbox{Ab}$ which
sends the generalized Khovanov bracket of a link diagram to the odd Khovanov complex
\cite{ORS2007,Pu2015}.
This functor takes
a crossingless diagram $R\subset\mathbb{R}^2$ to the exterior algebra
\[
F(R):=\Lambda^*V(R)
\]
where $V(R)$ is the free $\mathbb{Z}$-module formally generated by the connected components of $R$.
If $S$ is a merge saddle between two components $R_1,R_2\subset R$
then the map $F(S)$ is defined 
 via the obvious quotient map $F(R)\rightarrow F(R)/(R_1=R_2)$.
 If $S$ is a split saddle creating $R_1,R_2$, then $F(S)$ is defined similarly,
 but via the map $F(R)/(R_1=R_2)\rightarrow F(R)$ induced by left-multiplication by $R_1-R_2$
(see \cite{ORS2007}  for details).

There is a generalized TQFT functor $F_{gen}$ which takes
values in the category of $\Bbbk$-modules, and which generalizes the odd TQFT functor
from above
and the even TQFT functor from \cite{Kh1999}.
The functor $F_{gen}$ maps the generalized Khovanov bracket
to the generalized Khovanov complex (see \cite{Pu2015}) and was originally
defined in the language of \cite{Bn2002}.
We will instead define the module $F(R)_{gen}$ intrinsically by replacing the exterior algebra $F(R)=\Lambda^*V(R)$ by the
noncommutative polynomial ring over $\Bbbk$ in the components of $R$, modulo the relations
\[
R_1R_1=0,\qquad R_1R_2=\pi R_2R_1
\]
where $R_1$ and $R_2$ are components of $R$. Note that
\[
(R_1+\pi R_2)R_1=\pi R_2R_1=R_1R_2=(R_1+\pi R_2)R_2,
\]
so that left-multiplication by $R_1+\pi R_2$ identifies $R_1$ and $R_2$.
The map that $F_{gen}$ assigns to a split saddle can thus be defined via the map
$F(R)_{gen}/(R_1=R_2)\rightarrow F(R)_{gen}$ induced by left-multiplication $R_1+\pi R_2$.
Here we are assuming that, near the split saddle, $R_1$ lies to the right
of $R_2$ when looking in the direction of the specified orientation of the split saddle.
The map that $F_{gen}$ assigns to a merge saddle is defined via the quotient map
$F(R)_{gen}\rightarrow F(R)_{gen}/(R_1=R_2)$, as before.

The module $F(R)_{gen}$ comes with
a quantum grading and a supergrading. 
If $R$ has connected components $R_1,\ldots,R_c$, then 
the quantum degree of a generator $R_{i_1}R_{i_2}\cdots R_{i_b}$
is given by \mbox{$c-2b$}, and the superdegree is given by the modulo 2 reduction of $b$. 
In the generalized Khovanov complex, these gradings are shifted by the
shifts described in the previous subsections. One can check that the resulting shifted superdegree
of a generator $g$ agrees with the modulo 2 reduction of $(j-|D|)/2$,
where $j$ is the (shifted) quantum degree of $g$, and $|D|$ is the number of link components.

\subsection{Well Definedness of the Generalized Khovanov Bracket}

To construct the generalized Khovanov bracket one must fix a diagram, fix an orientation on the crossings, and finally fix a sign assignment.  To show that the generalized Khovanov bracket is well defined it is sufficient to show that these choices do not affect the homotopy type of the generalized Khovanov bracket.

\subsection{Dependence on Sign Assignments and Crossing Orientations}\label{subs:signs}
The following lemma was shown in \cite{ORS2007,Pu2015}:

\begin{lemma}
    \label{lem:3:signinv}
    All valid sign assignments for a particular link diagram and fixed crossing orientations produce isomorphic generalized Khovanov brackets.
\end{lemma}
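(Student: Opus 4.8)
The plan is to show that any two valid sign assignments $\epsilon$ and $\epsilon'$ for a fixed diagram $D$ with fixed crossing orientations differ by a \emph{coboundary} in an appropriate cochain complex, and that such a coboundary difference is realized by an explicit isomorphism of chain complexes in $\kom{\mat{\cobIIIgen}}$. Concretely, first I would observe that the defining equation \eqref{eq:3:validFace} says each of $\epsilon$ and $\epsilon'$ is a $1$-cochain on the cube $\{0,1\}^n$ (with values in the abelian group $\Bbbk^\times=\{\pm1,\pm\pi\}$, written multiplicatively) whose coboundary equals the fixed $2$-cochain $\sigma$ (up to the global sign in \eqref{eq:3:validFace}). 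Hence the ratio $\mu_{\dots\star\dots}:=\epsilon'_{\dots\star\dots}/\epsilon_{\dots\star\dots}$ is a $1$-\emph{cocycle} on the cube. Since the cube $[0,1]^n$ is contractible, its first cohomology with any coefficient group vanishes, so $\mu$ is a $1$-coboundary: there is a function $\alpha\mapsto\lambda_\alpha\in\Bbbk^\times$ on the vertices of the cube with $\mu_{\dots\star\dots}=\lambda_\beta/\lambda_\alpha$ for every edge $\alpha\to\beta$.

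Next I would use this vertex function $\lambda$ to build the isomorphism. Define $\Theta\colon\llbracket D\rrbracket_\epsilon\to\llbracket D\rrbracket_{\epsilon'}$ to be the diagonal morphism in $\mat{\cobIIIgen}$ whose component at the vertex summand $D_\alpha$ is $\lambda_\alpha\cdot\id_{D_\alpha}$. Each $\lambda_\alpha$ is an invertible scalar in $\Bbbk$, so $\Theta$ is an isomorphism of objects in $\mat{\cobIIIgen}$ in each cohomological degree. To check it is a chain map, compare $\partial'\circ\Theta$ and $\Theta\circ\partial$ on the edge $D_\alpha\to D_\beta$: the first is $\epsilon'_{\dots\star\dots}\lambda_\alpha\, d_{\dots\star\dots}$ and the second is $\lambda_\beta\,\epsilon_{\dots\star\dots}\,d_{\dots\star\dots}$, and these agree precisely because $\epsilon'_{\dots\star\dots}/\epsilon_{\dots\star\dots}=\lambda_\beta/\lambda_\alpha$. (One small point to record: the scalars $\lambda_\alpha$ are central in $\Bbbk$, so there is no ordering subtlety in moving them past the saddle cobordisms $d$; this is where it matters that $\Bbbk$ is commutative, and it is also why the argument is insensitive to the supergrading refinement of Subsection~\ref{subs:supergraded}.) Thus $\Theta$ is an isomorphism of chain complexes, and in fact an honest isomorphism — not merely a homotopy equivalence or an equivalence up to global scalar — which is stronger than strictly needed for well-definedness.

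I expect no serious obstacle here; the lemma is essentially the standard ``signs form a torsor over $H^1$ of a contractible cube'' argument, already present in \cite{ORS2007,Pu2015}, and the only thing to be careful about is bookkeeping: that $\mu$ genuinely satisfies the cocycle condition around every $2$-face (which is immediate since both $\epsilon$ and $\epsilon'$ have the \emph{same} coboundary $\sigma$, so the $\sigma$'s cancel in the ratio), and that the chosen primitive $\lambda$ takes values in the units $\Bbbk^\times$ rather than in all of $\Bbbk$ (automatic, since a coboundary of a $\Bbbk^\times$-valued cochain is $\Bbbk^\times$-valued, and conversely the primitive can be taken $\Bbbk^\times$-valued by propagating values along a spanning tree of the $1$-skeleton of the cube starting from $\lambda_\zeta=1$). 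The mild subtlety worth a sentence in the write-up is that \eqref{eq:3:validFace} is stated with a $-1$ on the right-hand side, so one should note that this overall sign is the same for $\epsilon$ and $\epsilon'$ and therefore also cancels in $\mu$.
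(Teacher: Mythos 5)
Your proposal is correct and follows essentially the same argument as the paper: both regard sign assignments as $\Bbbk^\times$-valued cellular $1$-cochains on the cube, observe that two valid assignments differ by a $1$-cocycle which is a coboundary $\delta\eta$ by contractibility of the cube, and then take the cubical chain isomorphism with components $\eta(\alpha)\operatorname{id}_{D_\alpha}$. Your additional verifications (the explicit chain-map check and the $\Bbbk^\times$-valuedness of the primitive) are fine but not substantively different from the paper's proof.
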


For later reference, we briefly recall the proof of this lemma.

\begin{proof}[Proof of Lemma~\ref{lem:3:signinv}.]
Fix a link diagram $L$ with $n$ crossings, and let $Q$ denote the cube $[0,1]^n$, equipped
with its usual CW structure. Further, let $o$ be a choice of crossing orientations for $L$.
If we regard a sign assignment for $(L,o)$ as a cellular $1$-cochain $\epsilon\in C^1(Q;\Bbbk^\times)$, then
condition \eqref{eq:3:validFace} can be written as
\begin{equation}\label{eq:3:validFacenew}
\delta\epsilon=-\sigma_{L,o},
\end{equation}
where $\sigma_{L,o}$ denotes the cellular 2-cochain on $Q$ defined by the $\sigma_{\ldots{\star}\dots{\star}\ldots}$. To prove the lemma,
we now note that any two solutions $\epsilon$ and $\epsilon'$ of \eqref{eq:3:validFacenew} must differ by a cocycle and hence, since $Q$
is contractible, by a coboundary $\delta\eta$ of a cellular $0$-cochain $\eta\in C^0(Q;\Bbbk^\times)$.
The desired chain isomorphism between the generalized Khovanov brackets of $(L,o,\epsilon)$ and $(L,o,\epsilon')$
is now given by the cubical chain map with components
 $\eta(\alpha)\operatorname{id}_{L_\alpha}\colon\llbracket L\rrbracket_\alpha\rightarrow\llbracket L\rrbracket_\alpha$
(where the term cubical means that the chain map has no nonzero components between different resolutions).
\end{proof}

In  \cite{ORS2007,Pu2015}, it was further shown that changing the orientation of a crossing does not change the generalized Khovanov bracket
as long as one adjusts the sign assignment accordingly. Lemma~\ref{lem:3:signinv} thus implies: 

\begin{lemma}
    \label{lem:3:orientinv}
    All choices of crossing orientations for a particular link diagram produce isomorphic generalized Khovanov brackets.
\end{lemma}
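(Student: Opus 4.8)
The plan is to reduce the statement to Lemma~\ref{lem:3:signinv} by analyzing the effect of reversing the orientation at a single crossing. Since any two choices of crossing orientations for a fixed link diagram $L$ differ by reversing the orientation at finitely many crossings, it suffices to treat the case where we flip the orientation at exactly one crossing $c$, and then iterate. So fix $L$, fix crossing orientations $o$, and let $o'$ be the result of reversing the arrow at $c$.

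First I would examine how the cube of resolutions changes under this flip. Reversing the orientation at $c$ does not change the underlying resolved diagrams $L_\alpha$ (the $0$- and $1$-resolutions of $c$ are the same as unoriented tangles); what changes is the local orientation carried by the saddle cobordism $d_{\ldots\star\ldots}$ at the edges of the cube in the $c$-direction. By the orientation reversal relations in $\cobIIIgen$ (the death/split/merge sign-change relations recalled in Subsection~\ref{subs:targetcategory}), reversing the local orientation at a saddle multiplies that cobordism by $\pi$ if the saddle is a split and leaves it unchanged if it is a merge — more precisely, the effect is multiplication by $\pi$ raised to the superdegree of the saddle. Simultaneously, I would track the effect on the commutativity data: the $2$-cochain $\sigma_{L,o}$ on $Q$ is replaced by $\sigma_{L,o'}$, and the faces whose $\sigma$-values can change are exactly the $2$-faces containing an edge in the $c$-direction; for those faces the flip toggles between the type-X/type-Y or commuting/$\pi$-commuting assignments in a controlled way dictated by Figure~\ref{fig:commutativitytypes}.

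The key computation is then to show that the old sign assignment $\epsilon$ for $(L,o)$, once we absorb the extra powers of $\pi$ coming from the orientation-reversal relations into the edge maps in the $c$-direction, becomes a valid sign assignment $\epsilon'$ for $(L,o')$ — i.e.\ it satisfies \eqref{eq:3:validFacenew} with $\sigma_{L,o}$ replaced by $\sigma_{L,o'}$. Concretely, define $\epsilon'$ to agree with $\epsilon$ on all edges not in the $c$-direction, and on each $c$-edge to be $\epsilon$ times the appropriate power of $\pi$; then $\delta\epsilon'$ differs from $\delta\epsilon$ only on faces containing a $c$-edge, and on each such face the product of the two extra $\pi$-factors (one for each $c$-edge of the face) must be checked to equal $\sigma_{L,o'}/\sigma_{L,o}$. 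This is a finite case check over the face types in Figure~\ref{fig:commutativitytypes}, using that a $2$-face in the $c$-direction has its two $c$-edges of the same or opposite superdegree exactly as recorded by whether the non-$c$ crossing sits on the over- or understrand. Once $\epsilon'$ is known to be valid for $(L,o')$, the identity chain map $\operatorname{id}_{L_\alpha}\colon\llbracket L\rrbracket_\alpha\to\llbracket L\rrbracket_\alpha$ on each resolution intertwines the differential of $(L,o,\epsilon)$ with that of $(L,o',\epsilon')$ by construction (the $\pi$-factors were defined precisely to match the orientation-reversal relations), giving a chain isomorphism $\llbracket (L,o,\epsilon)\rrbracket\cong\llbracket (L,o',\epsilon')\rrbracket$. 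Combining with Lemma~\ref{lem:3:signinv}, which says $\llbracket (L,o',\epsilon')\rrbracket\cong\llbracket (L,o',\epsilon'')\rrbracket$ for any valid $\epsilon''$, and iterating over the crossings where $o$ and $o'$ disagree, finishes the proof.

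The main obstacle I anticipate is the bookkeeping in the finite face check: one must be careful that the extra $\pi$-factors assigned to the $c$-edges are consistent across all four faces meeting a given $c$-edge (so that $\epsilon'$ is well defined edge-by-edge, not face-by-face) and simultaneously reproduce the change $\sigma_{L,o}\mapsto\sigma_{L,o'}$ on every affected face. The cleanest way to organize this is to note that a flip at $c$ multiplies each $c$-direction edge map by $\pi^{s}$ where $s$ is the superdegree of that saddle (a cocycle-level statement on the $c$-slices of $Q$), and then to verify that the induced change in $\delta\epsilon$ matches the change in $\sigma$ using the emergent handle and cross relations together with the classification of faces; the type-X versus type-Y distinction plays no role here because the value of $\sigma$ on X- and Y-faces is fixed by Figure~\ref{fig:commutativitytypes} regardless of crossing orientations.
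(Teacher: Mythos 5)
Your proposal is correct in outline and mirrors the approach the paper takes, with the caveat that the paper itself does not actually prove this lemma -- it simply cites \cite{ORS2007,Pu2015} for the claim that reversing a crossing orientation, together with an appropriate change of sign assignment, leaves the bracket unchanged, and then invokes Lemma~\ref{lem:3:signinv}. What you are doing is filling in the cited step explicitly, and the strategy (reduce to a single flip, use the orientation reversal relations to see that each $c$-direction edge map changes by $\pi^{s_e}$ with $s_e$ the superdegree of the saddle, absorb these factors into the sign cochain, and verify validity face-by-face) is exactly the argument of \cite[Lemma~2.3]{ORS2007} adapted to the $\Bbbk$-linear setting.

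One remark on the final aside: the claim that ``the type-X versus type-Y distinction plays no role here because the value of $\sigma$ on X- and Y-faces is fixed\ldots regardless of crossing orientations'' is misleading as stated. Whether a given ladybug face belongs to group X or group Y is \emph{precisely} a function of the two crossing orientations at that face (same arrows versus mixed arrows), so flipping the orientation at $c$ genuinely toggles the affected ladybug faces between X and Y and hence toggles $\sigma$ between $1$ and $\pi$. This is not an obstacle to your proof -- it is in fact the mechanism that makes it work: the ladybug faces are exactly the ones in which the two $c$-edges have opposite superdegree (one merge, one split), so the edge-level factor $\delta(\pi^{s})$ contributes $\pi$ on that face, matching the $\pi$ picked up by the X$\leftrightarrow$Y swap. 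Also, the well-definedness worry you raise is a non-issue since $\epsilon'$ is by construction defined edge-by-edge; the only thing to verify is the coboundary identity $\delta\epsilon'=-\sigma_{L,o'}$, which is the finite case check you correctly identify as the content of the proof.
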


In summary, any two pairs $(o,\epsilon)$ and $(o',\epsilon')$ consisting of a choice crossing orientations and a valid sign assignment
for a link diagram $L$
give rise to a cubical chain isomorphism
\begin{equation}\label{eq:3:cohiso}
f_{o,\epsilon}^{o',\epsilon'}\colon\llbracket L,o,\epsilon\rrbracket\rightarrow \llbracket L,o',\epsilon'\rrbracket,
\end{equation}
where we are writing $\llbracket L,o,\epsilon\rrbracket$ for the generalized Khovanov bracket of $(L,o,\epsilon)$.
By construction or by Lemma~\ref{lem:3:uniqueness} below, this chain isomorphism is canonical up to an overall invertible scalar, and it
satisfies the coherence conditions
\begin{equation}\label{eq:3:cohisoconditions}
f_{o,\epsilon}^{o,\epsilon}=\operatorname{id}\qquad\mbox{and}\qquad f_{o',\epsilon'}^{o'',\epsilon''}\circ f_{o,\epsilon}^{o',\epsilon'}=f_{o,\epsilon}^{o'',\epsilon''}
\end{equation}
up to an overall invertible scalar.

\begin{lemma}\label{lem:3:uniqueness} Let $f,g\colon \llbracket L,o,\epsilon\rrbracket\rightarrow \llbracket L,o',\epsilon'\rrbracket$ be cubical chain isomorphisms with components
$f_\alpha=r_\alpha\operatorname{id}_{L_\alpha}$ and $g_\alpha=s_\alpha\operatorname{id}_{L_\alpha}$ for invertible scalars $r_\alpha,s_\alpha\in\Bbbk^\times$.
Then $f=c g$ for an invertible scalar $c\in\Bbbk^\times$.
\end{lemma}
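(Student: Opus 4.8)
The plan is to reduce the statement to showing that the scalars $t_\alpha:=r_\alpha s_\alpha^{-1}\in\Bbbk^\times$ do not depend on the resolution $\alpha\in\{0,1\}^n$. Once this is established, the common value $c\in\Bbbk^\times$ satisfies $f_\alpha=r_\alpha\operatorname{id}_{L_\alpha}=c\,s_\alpha\operatorname{id}_{L_\alpha}=c\,g_\alpha$ for every $\alpha$, hence $f=cg$. (If $L$ has no crossings the cube has a single vertex and there is nothing to prove, so I would assume $n\geq 1$.)

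First I would extract the constraint coming from the hypothesis that $f$ is a chain map. Since $f$ is cubical, its only nonzero components are the $f_\alpha\colon L_\alpha\to L_\alpha$, and the differentials of $\llbracket L,o,\epsilon\rrbracket$ and $\llbracket L,o',\epsilon'\rrbracket$ have edge components $\epsilon_{\alpha\beta}d_{\alpha\beta}$ and $\epsilon'_{\alpha\beta}d'_{\alpha\beta}$ respectively, where $\alpha,\beta$ are adjacent resolutions (with $\beta$ obtained from $\alpha$ by changing a single $0$ to a $1$) and $d_{\alpha\beta}$, $d'_{\alpha\beta}$ are the saddle cobordisms for the two choices of crossing orientation. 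The chain-map equation on the edge from $\alpha$ to $\beta$ then reads $r_\beta\epsilon_{\alpha\beta}d_{\alpha\beta}=\epsilon'_{\alpha\beta}d'_{\alpha\beta}r_\alpha$. Because the two resolutions of a crossing do not depend on its orientation, $d_{\alpha\beta}$ and $d'_{\alpha\beta}$ have the same underlying surface and differ at most by the local orientation at the saddle point; by the orientation reversal relations this gives $d'_{\alpha\beta}=\lambda_{\alpha\beta}d_{\alpha\beta}$ for some unit $\lambda_{\alpha\beta}\in\Bbbk^\times$. Hence $(r_\beta\epsilon_{\alpha\beta}-\epsilon'_{\alpha\beta}\lambda_{\alpha\beta}r_\alpha)\,d_{\alpha\beta}=0$ in $\cobIIIgen$.

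The step I expect to be the crux is passing from this relation to a relation among scalars. Since $d_{\alpha\beta}$ is a saddle cobordism it has genus zero and no closed components, so Putyra's cancellation lemma recalled above (no nonzero element of $\Bbbk$ annihilates such a cobordism) forces $r_\beta\epsilon_{\alpha\beta}=\epsilon'_{\alpha\beta}\lambda_{\alpha\beta}r_\alpha$ as an identity of units; without this input the zero divisors $1\pm\pi$ of $\Bbbk$ could a priori obstruct the conclusion. Running the identical computation for $g$ yields $s_\beta\epsilon_{\alpha\beta}=\epsilon'_{\alpha\beta}\lambda_{\alpha\beta}s_\alpha$, and dividing the two identities in the abelian group $\Bbbk^\times$ cancels the common factor $\epsilon'_{\alpha\beta}\lambda_{\alpha\beta}\epsilon_{\alpha\beta}^{-1}$ and leaves $t_\beta=r_\beta s_\beta^{-1}=r_\alpha s_\alpha^{-1}=t_\alpha$. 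Thus $t$ is constant along every edge of the cube $[0,1]^n$, and since the $1$-skeleton of the cube is connected, $t_\alpha$ equals a single unit $c$ for all $\alpha$, whence $f=cg$.
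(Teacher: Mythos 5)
Your proof is correct and takes essentially the same route as the paper's: reduce by connectivity of the cube's $1$-skeleton to a single edge, use the chain-map condition on that edge, and cancel the saddle cobordism via Putyra's lemma that genus-zero cobordisms without closed components have trivial annihilator in $\Bbbk$. The paper streamlines one step by invoking its earlier remark to assume WLOG $o=o'$ (so $d_e=d'_e$), whereas you instead track the orientation-change factor $\lambda_{\alpha\beta}$ explicitly; the two are interchangeable.
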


\begin{proof} It suffices to show that if $f$ is a cubical chain isomorphism of the stated form, then
the scalar $r_\alpha$ appearing at a fixed vertex $\alpha$
determines the scalars $r_\beta$ at all other vertices $\beta$, in the sense that $r_\beta/r_\alpha=c_{\alpha\beta}$ for
a constant $c_{\alpha\beta}\in\Bbbk^\times$ which does not depend on $f$. Indeed, this will imply
$r_\beta/s_\beta=(c_{\alpha\beta}r_\alpha)/(c_{\alpha\beta}s_\alpha)=r_\alpha/s_\alpha=:c$ for all $\beta$ and thus $f=cg$.

Since any two vertices in the resolution cube of $L$ can be connected by a path of edges, it further suffices to consider the
case where the vertices $\alpha$ and $\beta$ are connected by an edge $e\colon\alpha\rightarrow\beta$. In this case,
we have the following commuting square, in which the horizontal arrows are the differentials in $\llbracket L,o,\epsilon\rrbracket$ and $\llbracket L,o',\epsilon'\rrbracket$ and the vertical arrows are the relevant components of $f$:
\[
\begin{tikzcd}
\llbracket L\rrbracket_\alpha \ar[r,"\epsilon_ed_e"]\ar[d,"r_\alpha\operatorname{id}_{L_\alpha}"']& \llbracket L\rrbracket_\beta\ar[d,"r_\beta\operatorname{id}_{L_\beta}"]\\
\llbracket L\rrbracket_\alpha \ar[r,"\epsilon'_ed'_e"']& \llbracket L\rrbracket_\beta
\end{tikzcd}
\]
By the remarks prior to Lemma~\ref{lem:3:orientinv}, we can assume without loss of generality that $o=o'$ and hence $d_e=d'_e$. The commutativity of
the square then implies $(r_\beta\epsilon_e-\epsilon'_er_\alpha)d_e=0$ and hence, since the saddle $d_e$ has trivial annihilator,
$r_\beta\epsilon_e=\epsilon'_er_\alpha$ or $r_\beta/r_\alpha=\epsilon'_e/\epsilon_e=:c_{\alpha\beta}$.
\end{proof}

If $L$, $o$, and $Q=[0,1]^n$ are as in the proof of Lemma~\ref{lem:3:signinv}, then the notion of a valid
sign assignment can be defined for any CW subcomplex $Q'\subseteq Q$. Specifically, a valid sign assignment
on such a subcomplex $Q'$ is given by a cellular 1-cochain $\epsilon'\in C^1(Q';\Bbbk^\times)$ such that
$\epsilon'=-\sigma_{L,o}|_{Q'}$. For later use, we will prove the following lemma.

\begin{lemma}\label{lem:3:CWsubcomplex} Let $Q'\subseteq Q$ be a CW subcomplex with trivial first homology.
Then any valid sign assignment on $Q'$ extends to a valid sign assignment on $Q$.
\end{lemma}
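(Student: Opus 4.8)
The plan is to phrase the extension problem in terms of cellular cohomology, exactly as in the proof of Lemma~\ref{lem:3:signinv}, and then reduce it to a standard obstruction-theory argument on the contractible cube $Q$. First I would fix a valid sign assignment $\epsilon'\in C^1(Q';\Bbbk^\times)$ on $Q'$, so that $\delta\epsilon'=-\sigma_{L,o}|_{Q'}$. Since $Q$ is contractible, a valid sign assignment on all of $Q$ exists (by \cite{ORS2007,Pu2015}); fix one and call it $\epsilon_0\in C^1(Q;\Bbbk^\times)$. Then $\eta:=\epsilon'/(\epsilon_0|_{Q'})\in C^1(Q';\Bbbk^\times)$ is a cellular $1$-cocycle on $Q'$, because $\delta\eta=\delta\epsilon'/\delta(\epsilon_0|_{Q'})=(-\sigma_{L,o}|_{Q'})/(-\sigma_{L,o}|_{Q'})=1$. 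The task is now to modify $\epsilon_0$ on $Q$ so that it restricts to $\epsilon'$ on $Q'$; equivalently, to find a $1$-cocycle $\tilde\eta\in C^1(Q;\Bbbk^\times)$ (necessarily a coboundary, since $H^1(Q;\Bbbk^\times)=0$ as $Q$ is contractible) with $\tilde\eta|_{Q'}=\eta$, after which $\epsilon:=\epsilon_0\cdot\tilde\eta$ is the desired extension.

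So everything reduces to the following purely cellular statement: if $Q'\subseteq Q$ is a CW subcomplex with $H_1(Q';\mathbb{Z})=0$, then every $1$-cocycle on $Q'$ with values in the abelian group $\Bbbk^\times$ extends to a $1$-cocycle on $Q$. I would prove this by first showing that $\eta$, being a cocycle on $Q'$ with $H^1(Q';\Bbbk^\times)=\operatorname{Hom}(H_1(Q';\mathbb{Z}),\Bbbk^\times)\oplus\operatorname{Ext}^1(H_0(Q';\mathbb{Z}),\Bbbk^\times)$; since $H_1(Q')=0$ and $H_0(Q')$ is free, $H^1(Q';\Bbbk^\times)=0$, so $\eta=\delta\eta'$ for some $\eta'\in C^0(Q';\Bbbk^\times)$. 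Now extend the $0$-cochain $\eta'$ arbitrarily to a $0$-cochain $\hat\eta'\in C^0(Q;\Bbbk^\times)$ (just assign any invertible scalar, say $1$, to each vertex of $Q$ not in $Q'$), and set $\tilde\eta:=\delta\hat\eta'\in C^1(Q;\Bbbk^\times)$. This is automatically a coboundary, hence a cocycle, and on each edge of $Q'$ it agrees with $\delta\eta'=\eta$. Then $\epsilon:=\epsilon_0\cdot\tilde\eta$ satisfies $\delta\epsilon=\delta\epsilon_0=-\sigma_{L,o}$ on all of $Q$ and restricts to $\epsilon_0|_{Q'}\cdot\eta=\epsilon'$ on $Q'$, as required.

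The only real subtlety — the step I expect to be the main obstacle to state cleanly rather than to prove — is the passage from "$H_1(Q';\mathbb{Z})=0$" to "$H^1(Q';\Bbbk^\times)=0$". One must be slightly careful because $\Bbbk^\times=\{\pm1,\pm\pi\}\cong\mathbb{Z}_2\times\mathbb{Z}_2$ is not divisible, so $\operatorname{Ext}^1(H_0,\Bbbk^\times)$ could a priori contribute; but $H_0(Q';\mathbb{Z})$ is a free abelian group (being $\mathbb{Z}^{(\#\text{components})}$), so the $\operatorname{Ext}$ term vanishes, and the universal coefficient theorem for cohomology gives $H^1(Q';\Bbbk^\times)\cong\operatorname{Hom}(H_1(Q';\mathbb{Z}),\Bbbk^\times)=0$. (Alternatively, one avoids UCT entirely by noting that a cocycle whose values lie in a group of exponent $2$ — or more simply by lifting: choose a set-theoretic section of some free resolution — but the cleanest route here is just to observe directly that a $1$-cocycle on a complex with trivial $H_1$ and free $H_0$ is a coboundary, which is elementary.) Everything else is bookkeeping: writing the sign-assignment condition as $\delta\epsilon=-\sigma_{L,o}$, dividing two solutions to get a cocycle, and extending a $0$-cochain by arbitrary values across the new cells.
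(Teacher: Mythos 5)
Your proposal is correct and follows essentially the same route as the paper: compare $\epsilon'$ with the restriction of a chosen valid sign assignment on $Q$, observe that the discrepancy is a $1$-cocycle on $Q'$ which is a coboundary $\delta\eta'$ by the hypothesis on $H_1(Q')$, extend the $0$-cochain $\eta'$ arbitrarily to $Q$, and twist the global assignment by its coboundary. The only difference is that you spell out, via the universal coefficient theorem (using that $H_0(Q')$ is free so the $\operatorname{Ext}$ term vanishes), why trivial $H_1$ forces the cocycle to be a coboundary with $\Bbbk^\times$ coefficients, a step the paper leaves implicit.
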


\begin{proof} Let $\epsilon'$ be a valid sign assignment on $Q'$, and choose any valid sign assignment $\epsilon$ on $Q$. Then $\epsilon'$ and $\epsilon|_{Q'}$ differ by a cocycle on $Q'$,
and the assumption on $Q'$ implies that this cocycle is a coboundary, $\delta\eta'$, for a cellular $0$-cochain $\eta'\in C^0(Q';\Bbbk^\times)$.
To obtain a valid sign assignment on $Q$ that extends $\epsilon'$, we now modify $\epsilon$ by $\delta\eta$, where
$\eta\in C^0(Q;\Bbbk^\times)$ is any extension of $\eta'$ to $Q$.
\end{proof}

In the above proof, we can choose the 0-cochain $\eta$ to be trivial on all vertices of $Q\setminus Q'$,
in which case the modified $\epsilon$ agrees with the unmodified $\epsilon$ on
all 1-cells of $Q$ whose closures are disjoint from $Q'$. If we are given any valid sign assignment $\epsilon$ on $Q$, we can thus assume that the extended sign assignment from
Lemma~\ref{lem:3:CWsubcomplex} agrees with $\epsilon$ on all such 1-cells.

\subsection{Generalized Khovanov homology as a diagram and as a colimit}\label{subs:colimit}

For a fixed link diagram $L$, let $M(L)$ denote the set of all pairs $(o,\epsilon)$
where $o$ is a choice of crossing orientations for $L$ and $\epsilon$ is a valid sign assignment for $(L,o)$:
\[
M(L):=\left\{(o,\epsilon)\,\middle\vert\,\parbox{2.55in}{\centering $o$ a choice of crossing orientations for $L$ and $\delta\epsilon=-\sigma_{L,o}$}\,\right\}.
\]
We can repackage the generalized Khovanov brackets $\llbracket L,o,\epsilon\rrbracket$
for the various possible choices of $(o,\epsilon)\in M(L)$
into a single link invariant $\llbracket L\rrbracket_M$,
which is itself functorial.
To define this invariant, let $M(L)'$ denote the grouped whose objects are the elements
of $M(L)$ and which has a single morphism between any two objects. Further, let $M(L)''$
be the set of all functors from
the groupoid $M(L)'$ to $\kombpmp{\mathcal{C}hron\mathcal{C}ob^3_{gen}}$. We then define
\[
\llbracket L\rrbracket_M\in M(L)''
\]
as the functor which sends an object $(o,\epsilon)\in M(L)'$ to the generalized
Khovanov bracket of $(L,o,\epsilon)$ and the unique morphism $(o,\epsilon)\rightarrow(o',\epsilon')$
to the isomorphism from \eqref{eq:3:cohiso}. Note that
this indeed yields a functor because of the conditions in \eqref{eq:3:cohisoconditions}.
Equivalently, we can view this functor as a diagram 
of shape $M(L)'$ in the category $\kombpmp{\mathcal{C}hron\mathcal{C}ob^3_{gen}}$.

As we shall see, the chain maps assigned to elementary link cobordisms
intertwine with the isomorphisms from \eqref{eq:3:cohiso}.
We can rephrase this property by introducing a category
$M''$ of functors valued in $\kombpmp{\mathcal{C}hron\mathcal{C}ob^3_{gen}}$,
where the morphisms between two such functors with source categories $I$ and $J$
is given by the natural transformations between the lifts of these functors
to the product category $I\times J$. A smooth link cobordism between two links with 
diagrams
$L_0,L_1\subset\mathbb{R}^2$ then induces a morphism $\llbracket L_0\rrbracket_M\rightarrow\llbracket L_1\rrbracket_M$
in this category.

By adapting an idea from \cite{BHL2019}, we can further extend the construction of $\llbracket L\rrbracket_M$
to arbitrary links in $\mathbb{R}^3$, including those that are not in general position
with respect to the projection onto the $xy$-plane. For this, consider an arbitrary link $L\subset\mathbb{R}^3$,
and let $N(L)$ denote the set of all triples $(\phi,o,\epsilon)$ such that
$\phi$ is an ambient isotopy of $\mathbb{R}^3$ taking the link $L=\phi_0(L)$ to a link
$\phi_1(L)$ in general position; and $(o,\epsilon)$ is an element of $M(D)$ where $D$ is the link diagram
of $\phi_1(L)$.

Given two triples $(\phi,o,\epsilon),(\phi',o',\epsilon')\in N(L)$, we can perform the isotopies $\phi_{1-t}$
and $\phi'_t$ one after the other to obtain an isotopy from $\phi_1(L)$ to $\phi'_1(L)$.
The latter isotopy corresponds to a link cobordism, which
induces a chain isomorphism
\begin{equation}\label{eq:3:cohisoN}
f_{\phi,o,\epsilon}^{\phi',o',\epsilon'}\colon\llbracket D,o,\epsilon\rrbracket\longrightarrow\llbracket D',o',\epsilon'\rrbracket
\end{equation}
where $D$ and $D'$ denote planar diagrams of the links $\phi_1(L)$ and $\phi'_1(L)$, respectively.
As we did for $M(L)$, we can now replace $N(L)$ by the corresponding groupoid $N(L)'$.
The generalized Khovanov brackets of the $(D,o,\epsilon)$ together with the isomorphisms
from \eqref{eq:3:cohisoN} then determine a diagram of shape $N(L)'$ or, equivalently, an element
\[
\llbracket L\rrbracket_N\in N(L)''
\]
in the set $N(L)''$ of functors from $N(L)'$ to
to $\kombpmp{\mathcal{C}hron\mathcal{C}ob^3_{gen}}$.
By the results of this paper, $\llbracket L\rrbracket_N$ is functorial
in the sense that any smooth link cobordism between two links $L_0,L_1\subset\mathbb{R}^3$ induces a morphism $\llbracket L_0\rrbracket_N\rightarrow\llbracket L_1\rrbracket_N$ in the category $M''$ defined above.

We can obtain a more concrete link invariant
by fixing a bidegree $(i,j)\in\mathbb{Z}^2$ and replacing 
each generalized Khovanov bracket $\llbracket D,o,\epsilon\rrbracket$ that appears in the above construction by 
the corresponding generalized Khovanov homology group in bidegree $(i,j)$ (here $i$ denotes the cohomological
grading and $j$ denotes the quantum grading).
After modding out by the multiplicative action of $\Bbbk^\times$, these generalized Khovanov homology groups
become sets $\mathcal{S}(D,o,\epsilon,i,j)$, 
and thus the invariant $\llbracket L\rrbracket_N$ becomes a diagram in the category of sets.
Taking the colimit of this diagram now yields a set
\[\mathcal{S}(L,i,j):=\left.\left(\quad\,\,\,\,\bigsqcup_{\mathclap{(\phi,o,\epsilon)\in N(L)}} \mathcal{S}(D,o,\epsilon,i,j)\right)\middle /
\left(x\sim g_{\phi,o,\epsilon}^{\phi',o',\epsilon'}(x)\right)\right.\]
where $g_{\phi,o,\epsilon}^{\phi',o',\epsilon'}$ denotes the map $\mathcal{S}(D,o,\epsilon,i,j)\rightarrow \mathcal{S}(D',o',\epsilon',i,j)$ that corresponds to the isomorphism from \eqref{eq:3:cohisoN}.
The set $\mathcal{S}(L,i,j)$ can be defined for arbitrary links in $\mathbb{R}^3$, but not for arbitrary links in $S^3$.
By the results of this paper, it is functorial in the sense that any smooth link cobordism $F\colon L\rightarrow L'$ in $\mathbb{R}^3\times[0,1]$ induces a map $\mathcal{S}(L,i,j)\rightarrow \mathcal{S}(L',i,j+\chi(F))$.

Note that in the remainder of this paper, we will almost always ignore the constructions of this subsection and instead work with the generalized Khovanov bracket $\llbracket L\rrbracket:=\llbracket L,o,\epsilon\rrbracket$
for a fixed link diagram $L\subset\mathbb{R}^2$ and a fixed choice of $(o,\epsilon)\in M(L)$.

\subsection{Invariance under Reidemeister moves}

Since we will need the relevant chain maps later, we will briefly summarize the proof that
the generalized Khovanov bracket is invariant under Reidemeister moves, up to homotopy equivalence.

Note that in the pictures in this subsection, all critical points are assumed to be oriented as in Convention~\ref{conv:2:orientations}, so that
deaths are oriented clockwise, and
 saddles are oriented to the right or to the front. In diagrams \eqref{eq:3:RImap} and \eqref{eq:3:RImapalt} below, we further assume
that the terms sitting vertically above each other are equipped with the same internal sign assignments, and
in diagram \eqref{eq:3:RIImap}, we make the same assumption about the terms that are connected by an identity map.

Lemmas~\ref{lem:3:RIinv} through \ref{lem:3:RIIIinv} below were shown by Putyra \cite{Pu2015}.

\begin{lemma}[Reidemeister I invariance]\label{lem:3:RIinv}
    The chain maps
 depicted in \eqref{eq:3:RImap} and \eqref{eq:3:RImapalt} form strong deformation retractions from the top complexes to the bottom complexes.
\end{lemma}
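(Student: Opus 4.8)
The plan is to verify directly that the explicit maps displayed in \eqref{eq:3:RImap} and \eqref{eq:3:RImapalt} satisfy the axioms of a strong deformation retraction, following \cite{Pu2015}. The generalized Khovanov bracket of a diagram with a single Reidemeister~I kink is a two-term complex whose two chain objects are the two resolutions of the kink crossing: one of them is the unkinked diagram $D$ with a disjoint small circle attached, the other is $D$ itself, and the differential between them is the merge (or, for the opposite kink, the split) saddle that merges the circle into $D$ or splits it off, possibly scaled by a unit from the sign assignment on the one-crossing cube. The maps in the figures are then (scalar multiples of) elementary cobordisms supported near the kink: a death capping off the circle, the split that creates it, and a birth serving as the homotopy (possibly decorated by handles, depending on the conventions). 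Writing $F$ for the proposed retraction onto $\llbracket D\rrbracket$, $G$ for the section in the opposite direction, and $H$ for the homotopy on the two-term complex, I would check, in order: (i) $F$ and $G$ are chain maps; (ii) $F\circ G=\operatorname{id}_{\llbracket D\rrbracket}$; (iii) $G\circ F-\operatorname{id}=\partial H+H\partial$; and (iv) the side conditions $F\circ H=0$, $H\circ G=0$ and $H\circ H=0$ that upgrade the retraction to a \emph{strong} one and that will be needed when these maps are composed in the movie-move arguments.

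Each of these identities unwinds into an equality of two $\Bbbk$-linear combinations of cobordisms in $\cobIIIgen$, so the content of the proof is local cobordism algebra. The relations I expect to use are primarily the Frobenius relations --- these are what collapse the ``birth then merge'', ``split then death'' and ``death then split'' composites that arise into a cylinder (the identity) or into the multiplication-type cobordism defining the split map --- together with the orientation-reversal relations and Convention~\ref{conv:2:orientations} to keep the local orientations at the saddle and the death consistent and to read off the accompanying signs and powers of $\pi$. In the refined setting of Subsection~\ref{subs:supergraded} one must additionally carry the grading shift $\{s_\alpha\}$ on the resolution containing the extra circle; matching this shift is precisely what forces the particular scalar prefactors in $F$, $G$ and $H$, and verifying that $F$ and $G$ are homogeneous of the correct superdegree is part of step~(i).

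Since Reidemeister~I has several variants --- which strand lies on top, the sign of the kink, and the side on which the loop sits --- the statement is split over the two diagrams \eqref{eq:3:RImap} and \eqref{eq:3:RImapalt}, and one further cuts down the list of cases by symmetry: some variants follow by reversing the direction of the movie (which interchanges the roles of $F$ and $G$), and others by applying an orientation-reversing homeomorphism of the plane, which --- as in the discussion surrounding Figure~\ref{fig:xytype} --- exchanges the type~X and type~Y data, so that one only has to check that the proposed maps transform into each other correctly. I would therefore carry out the computation in one representative case in full and indicate how the remaining cases follow.

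The anticipated obstacle is bookkeeping rather than conceptual difficulty: pinning down the signs and powers of $\pi$ in step~(iii) so that the composite $\partial H+H\partial$ equals $G\circ F-\operatorname{id}$ on the nose while the superdegrees stay balanced, which forces the exact scalars attached to the elementary cobordisms in $F$, $G$ and $H$. By contrast, step~(ii) is a single application of a Frobenius (counit) relation, and the side conditions $F\circ H=0$ and $H\circ G=0$ are essentially immediate once $H$ is written down, since $H$ is supported on the circle-containing resolution and is killed on the relevant side, with $H\circ H=0$ holding for homological-degree reasons.
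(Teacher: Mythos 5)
Your plan is the right one, and essentially matches how this statement is established: the paper itself does not spell out a proof of Lemma~\ref{lem:3:RIinv} but cites Putyra, whose argument is exactly the hands-on verification you outline --- exhibit $F$, $G$, $H$ locally as death/split/birth cobordisms with scalar prefactors and check $FG=\id$, $GF-\id=\partial H + H\partial$, and the side conditions, using the pruning/Frobenius and orientation-reversal relations.

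Two small cautions. First, your opening sentence describes $\llbracket D\rrbracket$ near a kink as ``a two-term complex whose two chain objects are the two resolutions of the kink crossing''; strictly speaking it is a mapping cone whose two layers are each full Khovanov brackets of the unkinked diagram (one with an extra circle), with the vertical maps varying over all resolutions $\alpha$ of the remaining crossings. This is not cosmetic here: the scalar $\pi^{S(L_0,\alpha)}$ appearing in \eqref{eq:3:RImap} is $\alpha$-dependent precisely so that the cubical map $F$ commutes with all outside differentials, and checking that commutation is part of step~(i), not a degenerate case. Second, the proposed symmetry reduction via an orientation-reversing homeomorphism of the plane is more delicate than you indicate, since such a homeomorphism exchanges type~X and type~Y sign conventions (cf.\ Lemmas~\ref{lem:XY2}--\ref{lem:XY4}); within a fixed type, the cleaner reductions are the two explicit diagrams \eqref{eq:3:RImap}/\eqref{eq:3:RImapalt} for the two kink signs, together with the remark in the paper about what happens when the crossing orientation is flipped. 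With those adjustments the outline is sound and compatible with the cited source.
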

\begin{equation}
    \label{eq:3:RImap}
    \includeFig{S03F007}
\end{equation}
\begin{equation}
    \label{eq:3:RImapalt}
    \includeFig{S03F020}
\end{equation}

In \eqref{eq:3:RImap}, the quantity $S(L_0,\alpha)$ is defined
as in \eqref{eq:3:Smap}.
If in the diagrams in \eqref{eq:3:RImap} and \eqref{eq:3:RImapalt} the orientation of the crossing is flipped, then the orientations
of the saddles in the differentials will
be flipped as well, but the orientations of the saddles in the chain maps would
still the be the ones dictated by Convention~\ref{conv:2:orientations}.

\begin{lemma}[Reidemeister II invariance]\label{lem:3:RIIinv}
    The chain maps depicted in \eqref{eq:3:RIImap} form a strong deformation retraction from the bottom complex to the top complex.
\end{lemma}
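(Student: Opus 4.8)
The plan is to verify invariance under Reidemeister II in the same spirit as the already-quoted Reidemeister I argument (Lemma~\ref{lem:3:RIinv}), namely by writing down explicit chain maps between the two-term bottom complex and the four-term top complex and checking that one composition is the identity while the other is chain-homotopic to the identity via an explicit homotopy. First I would set up notation: label the four resolutions of the two-crossing RII tangle by $00$, $01$, $10$, $11$, observe that the resolution $D_{00}$ (the two parallel strands) is planar-isotopic to the bottom-complex diagram, while $D_{01}$ and $D_{10}$ are each a single circle disjoint from a pair of arcs (one carries the circle that gets cancelled), and $D_{11}$ is again planar-isotopic to the bottom diagram. The inclusion map $g$ from the bottom complex sends its generator at $D_{00}$ by the identity (or by the relevant internal-sign scalar) into the summand $D_{00}$ of the top complex; the projection $f$ from the top complex uses the identity out of $D_{00}$, a cap/death cobordism out of one of $D_{01},D_{10}$ (the one attached by a split from $D_{00}$), and zero out of $D_{11}$, with signs arranged to make $f$ a chain map.

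The key steps, in order, are: (1) identify precisely which of $D_{01},D_{10}$ receives a split saddle from $D_{00}$ and which receives a merge saddle into $D_{11}$ — this depends on crossing orientations and is where Convention~\ref{conv:2:orientations} is invoked — and fix the default orientations on the saddles in $f$ and the homotopy $h$; (2) check $f\partial_{\mathrm{top}} = \partial_{\mathrm{bot}} f$ and $\partial_{\mathrm{top}} g = g\partial_{\mathrm{bot}}$, which reduces to a handful of relations in $\cobIIIgen$, principally the fact that a death composed with the relevant split/merge is an identity up to sign together with one of the Frobenius/associativity relations; (3) verify $f\circ g = \operatorname{id}$ on the nose (this should be immediate since both factor through the identity on $D_{00}$); (4) construct the homotopy $h\colon \llbracket D\rrbracket^i_{\mathrm{top}}\to\llbracket D\rrbracket^{i-1}_{\mathrm{top}}$ with a single nonzero component, a birth-type cobordism from $D_{11}$ back into $D_{01}$ or $D_{10}$, and check $\operatorname{id} - g\circ f = \partial h + h\partial$; this last identity is the computational heart and uses the cross relations, the orientation reversal relations, and the $\pi$-commuting/commuting bookkeeping from Subsection~\ref{subs:targetcategory} to cancel the $D_{11}$ term and the unwanted circle terms. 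Because we are allowed overall invertible scalars and because the statement asserts only a strong deformation retraction, I would not chase the exact scalar in every component but rather record which relation forces each sign, deferring a global rescaling to the end.

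The main obstacle I anticipate is the sign and $\pi$-bookkeeping in step (4): the two-crossing face $D_{0\star\star}$ through which the homotopy lives is precisely one of the Group C, Group $\pi$, Group X, or Group Y faces of Figure~\ref{fig:commutativitytypes}, and the value of $\sigma$ there together with the chosen edge signs $\epsilon$ must conspire with the orientation-reversal relation used to slide the death past a split so that $\partial h + h\partial$ picks up exactly the coefficient needed to kill $D_{11}$ and leave $\operatorname{id}-gf$. In the even Khovanov case this is the classical ``delooping/Gaussian elimination'' computation; here one must additionally confirm that no stray factor of $\pi$ survives, which is exactly the kind of check that can only come out right if the default saddle orientations of Convention~\ref{conv:2:orientations} are used consistently in the differentials, in $f$, $g$, and $h$. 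A secondary subtlety is that there are several variants of RII depending on the relative orientations of the two strands and on which strand is the overstrand; I would handle one representative case in full and then note that the others follow by symmetry (rotating the diagram, or applying the orientation-reversing homeomorphism of $\mathbb{R}^2$ that swaps type X and type Y configurations) together with Lemma~\ref{lem:3:orientinv}.
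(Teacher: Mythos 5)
Your overall strategy---writing down $f$, $g$, and an explicit homotopy and verifying the retraction identities directly, with the $\pi$-bookkeeping governed by Convention~\ref{conv:2:orientations} and Figure~\ref{fig:commutativitytypes}---is a legitimate route (the paper itself gives no proof here but cites Putyra, and his argument is exactly such a delooping-style computation). However, your combinatorial starting point is wrong, and the error propagates into the definitions of all three maps. The four resolutions of the two-crossing RII tangle are not ``vertical, circle, circle, vertical'': exactly one resolution contains a closed circle (the one smoothing both crossings so as to close off the middle region), exactly one is planar-isotopic to the crossingless bottom diagram, and these two are \emph{antipodal} in the square, hence are the two middle vertices $D_{01},D_{10}$ (homological degree $0$); the initial and terminal vertices $D_{00},D_{11}$ are the two cup--cap (``horizontal'') resolutions, which are not isotopic to the bottom diagram. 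Consequently your $g$ (identity into $D_{00}$) and $f$ (identity out of $D_{00}$, death out of a middle vertex, zero out of $D_{11}$) do not match homological degrees---the bottom complex is concentrated in degree $0$ while $D_{00}$ sits in degree $-1$---and an identity component into the initial vertex cannot commute with the differential, since its composition with the two saddles leaving $D_{00}$ is a nonzero morphism in $\cobIIIgen$. In the maps of \eqref{eq:3:RIImap} the identity components of $f$ and $g$ connect the bottom complex with the vertical-strand resolution, the death in $f$ comes out of the circle resolution, and the birth in $g$ goes into it.

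The proposed homotopy is also insufficient: a single component from $D_{11}$ into the circle vertex gives $\partial h+h\partial=0$ on $D_{00}$, whereas $\id-g\circ f$ restricts there to $\id_{D_{00}}$ (because $f$ vanishes in degree $-1$). You need a second component, a death from the circle vertex down to $D_{00}$, in addition to the birth $D_{11}\rightarrow$ circle vertex; only then can the two compositions cancel the $D_{00}$ and $D_{11}$ terms, with the Frobenius, sphere, and orientation-reversal relations controlling the cross-terms and the factor $a\in\{1,\pi\}$ of \eqref{eq:3:RIImap} absorbing the dependence on the crossing orientations (the paper handles the variants through this explicit constant rather than by a symmetry appeal). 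Finally, since the statement asserts a \emph{strong} deformation retraction, you should also record the side conditions (e.g.\ $f\circ h=0$ and $h\circ g=0$), not only $f\circ g=\id$ and the homotopy identity. With the cube identification corrected and the two-component homotopy in place, the rest of your plan does reconstruct the intended proof.
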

\begin{equation}
    \label{eq:3:RIImap}
    \hspace{-1.5em}\includeFig{S03F008}\hspace{-1.5em}
\end{equation}
In the above definition of $f$, the constant $a$ takes on the value 1 if the right crossing is oriented downward---as shown in the diagram---and the value $\pi$ if it is oriented upward. The saddles in $f$ and $g$ inherit their orientations
from the directions of the arrows at the left and right crossing, respectively, and the death in $f$ is always oriented clockwise.

To prove invariance under Reidemeister III moves,
Putyra uses that the generalized Khovanov brackets on the two sides of the 
move
can be viewed as mapping cones, as shown below.
\begin{equation}
    \includeFig{S03F010}
\end{equation}
\begin{equation}
    \includeFig{S03F011}
\end{equation}
The homotopy type of a mapping cone of a chain map does not change if
one composes the chain map with a homotopy equivalence.
Lemma~\ref{lem:3:RIIinv} thus implies that the above mapping cones are homotopy equivalent to
the ones below (where the rightmost maps in \eqref{eq:3:conesL} and \eqref{eq:3:conesR}
are induced by inverse Reidemeister II moves):
\begin{equation}
    \label{eq:3:conesL}
    \includeFig{S03F012}
\end{equation}
\begin{equation}
    \label{eq:3:conesR}
    \includeFig{S03F013}
\end{equation}

Now let $\Phi_L$ and $\Phi_R$ 
denote the composed chain maps that appear in the cones on the right-hand sides of 
\eqref{eq:3:conesL} and \eqref{eq:3:conesR}, respectively. After choosing sign assignments appropriately,
we can assume that $\Phi_L$ and $\Phi_R$ have the same source complex
and the same target complex. Invariance under Reidemeister III moves then follows
from the lemma below.

\begin{lemma}[Reidemeister III invariance]\label{lem:3:RIIIinv}
$\Phi_L$ and $\Phi_R$ are identical, up to a possible overall invertible scalar.
\end{lemma}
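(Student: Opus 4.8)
The plan is to reduce the comparison of $\Phi_L$ and $\Phi_R$ to a question about the cobordism maps between the two relevant one-crossing resolutions, and then to invoke the rigidity established in Lemma~\ref{lem:3:uniqueness}. Recall that each of $\Phi_L$ and $\Phi_R$ is a composite of three maps: the Reidemeister~II retraction map, the saddle differential connecting the two summands of the mapping cone, and the inverse Reidemeister~II map. After choosing sign assignments so that the source complexes and the target complexes literally agree — which is possible by Lemma~\ref{lem:3:CWsubcomplex}, extending a common sign assignment on the relevant subcomplexes — both $\Phi_L$ and $\Phi_R$ become cubical chain maps between the same two complexes. I would first argue that each of $\Phi_L$ and $\Phi_R$ is, up to homotopy, determined by its components on the resolutions, because the Reidemeister~II maps from Lemma~\ref{lem:3:RIIinv} are strong deformation retractions and composing with a homotopy equivalence preserves the homotopy class of a cone map. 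Thus it suffices to compare $\Phi_L$ and $\Phi_R$ on each resolution of the three-crossing tangle.

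The next step is to compute these component maps explicitly. On most resolutions the two sides are visibly equal: the underlying cobordisms are the same surface in $\mathbb{R}^2\times I$ (a sequence of saddles and possibly a death, coming from the planar isotopy relating the two sides of the Reidemeister~III move), and the relations in $\cobIIIgen$ — principally the associativity, commutativity, and cross relations — identify the two composites of elementary cobordisms. The only resolutions where the comparison is delicate are those where the face connecting the two cone summands is of type X or type Y, i.e. where the diamond relation enters; but on exactly those resolutions the relevant cobordism is annihilated by $1-\pi$, so the ambiguity between a naively commuting and a $\pi$-commuting choice collapses, and the two sides again agree (possibly after multiplication by $\pi$, which is an allowed overall scalar). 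I would organize this as a short case analysis over the $2^3=8$ resolutions — or better, over the isomorphism types of the boundary tangles — keeping track of signs coming from the chosen sign assignment.

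Finally, I would assemble the component-wise comparison into a global statement. Since $\Phi_L$ and $\Phi_R$ are cubical chain maps between the same complexes whose components on resolutions are scalar multiples of the same cobordisms, and since those cobordisms (being saddles, or compositions of saddles with at most one death on a nonclosed genus-zero surface) have trivial annihilator in $\Bbbk$ by the cited lemma of Putyra, the argument of Lemma~\ref{lem:3:uniqueness} applies verbatim: the scalar at one resolution propagates along the edges of the resolution cube and pins down the scalars at all other resolutions. Hence $\Phi_L = c\,\Phi_R$ for a single $c\in\Bbbk^\times=\{\pm1,\pm\pi\}$. The main obstacle I anticipate is not conceptual but bookkeeping: carefully matching the orientations of all the saddles (dictated by Convention~\ref{conv:2:orientations} and by the arrows at the crossings) on both sides of the move, and tracking the induced signs from the sign assignments, so that the ``same source and target'' normalization is genuinely achievable and the residual discrepancy really is a single global scalar rather than resolution-dependent signs.
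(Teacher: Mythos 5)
Your first step coincides with the paper's: after normalizing the sign assignments so that $\Phi_L$ and $\Phi_R$ share a source and a target, one checks that their nonzero components have the same underlying cobordisms, so the whole question is whether the residual coefficients differ by a \emph{single} element of $\Bbbk^\times$ rather than by resolution-dependent units. It is in this second step that your argument has a genuine gap. You invoke Lemma~\ref{lem:3:uniqueness} ``verbatim,'' but its hypotheses do not hold here: that lemma concerns cubical chain maps between brackets of the \emph{same} diagram whose components are invertible scalars times \emph{identity} cobordisms, and its proof rests on the fact that commuting with a single edge differential yields a one-term relation $(r_\beta\epsilon_e-\epsilon'_e r_\alpha)d_e=0$ that can be cancelled against one saddle with trivial annihilator. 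For $\Phi_L$ and $\Phi_R$ the source and target are brackets of diagrams with different numbers of crossings (an $(n-1)$-cube mapping to an $(n-3)$-cube), several components share a source or target vertex while others vanish, the components are composites involving saddles and a death rather than identities, and the chain-map identity over a $2$-face is a \emph{sum} of composites; none of the single-edge cancellation machinery transfers without substantial extra work, and that work is precisely the content of the lemma. Relatedly, your remark that on the type X/Y resolutions the two sides ``agree possibly after multiplication by $\pi$, which is an allowed overall scalar'' begs the question: a factor of $\pi$ appearing only at some resolutions is exactly the kind of non-global discrepancy the lemma must exclude, and the diamond relation's $1-\pi$ annihilation cannot be used to wave it away while you simultaneously rely on trivial annihilators to propagate scalars.

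The paper closes this gap by a different mechanism: it regards the coefficients of the two cones in \eqref{eq:3:conesL} and \eqref{eq:3:conesR} as valid sign assignments (cellular $1$-cochains) on the CW complex $P=Q\cup Q'$ obtained by gluing two $(n-1)$-cubes along two $(n-2)$-subcubes, observes that by the chosen normalizations the two cochains differ by a relative cocycle in $Z^1(P,Q\cup T;\Bbbk^\times)$, and computes $Z^1(P,Q\cup T;\Bbbk^\times)\cong H^1(S^1;\Bbbk^\times)\cong\Bbbk^\times$, so the discrepancy is forced to be one global unit. This cohomological computation is the rigorous form of the ``propagation'' you are reaching for; if you want to keep your route, you would need to replace the appeal to Lemma~\ref{lem:3:uniqueness} with an explicit analysis of which components of $\Phi_L$, $\Phi_R$ are nonzero, which $2$-face relations reduce to two-term equations between unit multiples of a common genus-zero cobordism without closed components, and why those suffice to connect all components — at which point you have essentially reconstructed the paper's relative-cocycle argument by hand.
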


Since Putyra's proof of this lemma omitted some of the details, we here give
a more thorough argument, based
on the proof of a corresponding result from \cite{ORS2007}.

\begin{proof}[Proof of Lemma~\ref{lem:3:RIIIinv}]

The diagrams in \eqref{eq:3:RIIIL} and \eqref{eq:3:RIIIR} below depict the maps $\Phi_L$ and $\Phi_R$ more explicitly.
For the present argument, we need to consider the maps which travel up these diagrams. There is a second
version of the Reidemeister III move where the crossing that travels over the lowest strand is reversed, so
that the roles of its $0$- and $1$-resolution are exchanged. For that version of the move,
one would need to consider the maps 
traveling down the diagrams, but the argument
would otherwise be essentially identical.

\begin{equation}
    \label{eq:3:RIIIL}
    \hspace{-1em}\includeFig{S03F014}\hspace{-1em}
\end{equation}
\begin{equation}
    \label{eq:3:RIIIR}
    \hspace{-1em}\includeFig{S03F015}\hspace{-1em}
\end{equation}

A careful investigation of the chain maps in \eqref{eq:3:RIImap}, \eqref{eq:3:RIIIL}, and \eqref{eq:3:RIIIR} now
reveals that the nonzero components of
$\Phi_L$ and $\Phi_R$ (and of the maps in reverse direction) are given by the following underlying cobordisms:

\begin{minipage}{0.5\textwidth}
    \begin{equation}
        \includeFig{S03F017}
    \end{equation}
\end{minipage}%
\begin{minipage}{0.5\textwidth}
    \begin{equation}
        \includeFig{S03F018}
    \end{equation}
\end{minipage}

This shows that the maps $\Phi_L$ and $\Phi_R$ are the same when coefficients in $\Bbbk^\times$ are stripped away.
What remains is to show that the coefficients in the cone in \eqref{eq:3:conesL} are consistent with those in the cone in \eqref{eq:3:conesR}.
To see this, note that the cone on the right-hand side of \eqref{eq:3:conesL} contains two obvious quotient complexes,
which are both cubical. The first of these is given by the bottom layer of \eqref{eq:3:RIIIL},
whereas the second one is obtained from the entire mapping cone by removing the term that sits at the rightmost vertex in the
bottom layer of \eqref{eq:3:RIIIL}.

Corresponding to these two quotient complexes, we consider two $(n-1)$-dimensional cubes $Q\cong[0,1]^{n-1}$ and $Q'\cong[0,1]^{n-1}$, where $n$ denotes the number of crossings in the link diagrams that show up on either side of the Reidemeister III move.
Let $P\coloneqq Q\cup Q'$ be the CW complex obtained by gluing the cubes $Q$ and $Q'$ along the two $(n-2)$-dimensional subcubes that correspond to the two edges on the left side of the bottom layer of  \eqref{eq:3:RIIIL}.

It is easy to see that the signs coming from \eqref{eq:3:conesL} define a valid sign assignment on $P$, in an obvious sense.
We can think of this sign assignment as a cellular 1-cochain $\epsilon\in C^1(P;\Bbbk^\times)$, and there is a similar
cellular 1-cochain $\epsilon'\in C^1(P;\Bbbk^\times)$ coming from the signs in \eqref{eq:3:conesR}.
Because of our choices, it follows that $\epsilon$ and $\epsilon'$ must differ by a relative cocycle
in $Z^1(P,Q\cup T;\Bbbk^\times)$, where $T\cong[0,1]^{n-2}$ corresponds to the top-most term in \eqref{eq:3:RIIIL}.
The lemma now follows because $Z^1(P,Q\cup T;\Bbbk^\times)\cong H^1(P/(Q\cup T);\Bbbk^\times)\cong H^1(S^1;\Bbbk^\times)\cong\Bbbk^\times$.
\end{proof}

While in principle, there could be many different homotopy equivalences between the complexes on the two sides of a Reidemeister move,
the specific homotopy equivalences considered in this subsection are unique (or unique up to overall invertible scalars).
In each of the moves, we placed some restrictions on the relationship between the chosen sign assignments before and after the move.
These restrictions can easily be lifted by composing the chain maps discussed in this subsection with the isomorphisms
from \eqref{eq:3:cohiso}. The resulting chain maps are still unique up to invertible scalars and natural with respect to
the choice of $(o,\epsilon)$, 
and thus induce well-defined morphisms $\llbracket L_0\rrbracket_M\rightarrow\llbracket L_1\rrbracket_M$ in the category $M''$ from
the previous subsection.

\subsection{Equivalence of type X and type Y assignments}\label{subs:XYequivalence}

We still need to prove Theorem~\ref{thm:equivalence} about the equivalence of type X and type Y theories. 

In the following, let $D\subset\mathbb{R}^2$ be a link diagram in the $xy$-plane representing a link $L\subset\mathbb{R}^3$.
Moreover, let $D'$ be the diagram of the rotated link $L':=\rho_1(L)$, where
$\rho_t\colon\mathbb{R}^3\rightarrow\mathbb{R}^3$
is the rotation by $180t$ degrees about the $y$-axis $\lambda\subset\mathbb{R}^3$.
Note that, on the level of diagrams in the $xy$-plane, $D'$ can be obtained from $D$ by first reflecting
$D$ along $\lambda$, and then reversing the over/undercrossing information
at each crossing of the resulting reflected diagram.

Let $o$ be a choice of crossing orientations for $D$, and $o'$ be the corresponding
reflected choice for $D'$. Moreover, let $\epsilon$ and $\epsilon'$ be fixed
type X sign assignments for $(D,o)$ and $(D',o')$, respectively.
The links $\rho_t(L)$ for $t\in[0,1]$ define an ambient isotopy from $L$ to $L'$, and
there is thus an induced homotopy equivalence
\begin{equation}\label{eq:XY1}
\llbracket D,o,\epsilon\rrbracket\simeq\llbracket D',o',\epsilon'\rrbracket,
\end{equation}
which is defined uniquely up to homotopy and invertible scalars.
Now consider any smooth link cobordism $F\colon L_0\rightarrow L_1$, and let
\begin{equation}\label{eq:XY2}
F':=(\rho_1\times\operatorname{id}_I)(F)\subset\mathbb{R}^3\times I
\end{equation}
denote the corresponding rotated link cobordism
between the rotated links $L_0'$ and $L_1'$.
Further, let $\Phi^Y_F$ and $\Phi^Y_{F'}$ denote the type X maps that $F$ and $F'$ induce between
the generalized Khovanov brackets of type Y. We then claim:

\begin{lemma}\label{lem:XY1}
The homotopy equivalence in \eqref{eq:XY1} is natural, in the sense that it intertwines
the maps $\Phi^Y_F$ and $\Phi^Y_{F'}$ up to homotopy and an overall invertible scalar.
\end{lemma}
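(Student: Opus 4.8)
The plan is to reduce the statement to a naturality check that has essentially already been carried out for the other Reidemeister-type moves in Subsection~\ref{subs:signs}, and then leverage the rigidity provided by Lemma~\ref{lem:3:uniqueness}. First I would observe that the homotopy equivalence in \eqref{eq:XY1} is by construction the composite $\Phi^Y_{\rho}$ induced by the ambient isotopy cobordism $F_\rho\subset\mathbb{R}^3\times I$ obtained from the rotation family $\rho_t(L)$. Likewise, the rotated cobordism $F'=(\rho_1\times\id_I)(F)$ satisfies the obvious identity of cobordisms (rel.\ boundary, after reparametrizing the $I$-coordinates) that stacking $F_\rho^{L_0}$ on top of $F$ is ambient isotopic to stacking $F'$ on top of $F_\rho^{L_1}$, simply because $\rho$ is an ambient isotopy of $\mathbb{R}^3$ and therefore commutes (up to ambient isotopy) with any cobordism placed below or above it. Since the assignment $F\mapsto\Phi^Y_F$ is well defined up to homotopy and invertible scalars (Theorem~\ref{thm:main}, which for the purposes of this lemma we may assume has been established for the relevant elementary pieces, or which we may prove simultaneously), applying $\Phi^Y_{(-)}$ to both sides of this ambient-isotopy identity gives
\[
\Phi^Y_{F_\rho^{L_1}}\circ\Phi^Y_{F}\simeq s\,\Phi^Y_{F'}\circ\Phi^Y_{F_\rho^{L_0}}
\]
for some $s\in\Bbbk^\times$, which is exactly the asserted naturality of \eqref{eq:XY1}.

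The key steps, in order, are: (1) identify \eqref{eq:XY1} with $\Phi^Y_{F_\rho}$, checking that the rotation family, presented as a movie, realizes the stated homotopy equivalence — this is where one uses that $\rho$ reflects $D$ along $\lambda$ and reverses crossing information, so that the induced movie consists only of planar isotopies together with the crossing-information-reversing Reidemeister-$0$-type frames, whose chain maps are the cubical isomorphisms from \eqref{eq:3:cohiso}; (2) write down the ambient-isotopy identity of cobordisms in $\mathbb{R}^3\times I$ relating $F_\rho^{L_1}\cdot F$ and $F'\cdot F_\rho^{L_0}$; (3) apply functoriality of $\Phi^Y_{(-)}$ (up to homotopy and scalars) to both sides; (4) conclude. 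An alternative, more hands-on route for step (1)–(2) is to work one elementary cobordism at a time: it suffices to prove the intertwining when $F$ is a single Reidemeister move, birth, death, or saddle, since any movie is a composite of these and both $\Phi^Y_{(-)}$ and conjugation by \eqref{eq:XY1} are (weakly) functorial. For each elementary $F$, the rotated $F'$ is the ``mirror'' elementary cobordism, and one checks directly that the cobordism-level square commutes: the underlying cobordisms of $\Phi^Y_{F'}$ and $\Phi^Y_F$ are related by the plane-reflection, and the reflection carries the type~X local data on one side to the type~X local data on the other, so after stripping $\Bbbk^\times$-coefficients the two composites agree; the scalar discrepancy is then pinned down by Lemma~\ref{lem:3:uniqueness} exactly as in the Reidemeister~III argument, since both composites are cubical chain maps of the form $r_\alpha\,\id_{L_\alpha}$ on each resolution (or are built from such by the mapping-cone trick).

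The main obstacle I expect is step (2)/(3) in the presence of a genuine saddle, merge, or split: the rotation $\rho$ reverses the local orientation at critical points (it is orientation-reversing on the plane), so the rotated elementary cobordism $F'$ does not literally equal the reflection of $F$ as a \emph{chronological} cobordism — it carries the opposite local orientation. One must therefore invoke the orientation reversal relations of $\cobIIIgen$, which is precisely where the factor $\pi$ (and hence the necessity of allowing an overall invertible scalar, rather than getting the intertwining on the nose) enters. Concretely, a split saddle and its orientation-reversal differ by a factor $\pi$ in $\cobIIIgen$, so passing from $F$ to $F'$ via $\rho$ can introduce a $\pi$ for each odd critical point; bookkeeping these factors across a general movie is the delicate part, and the cleanest way to handle it is \emph{not} to track them individually but to note that all of them are absorbed into the single global scalar $s\in\Bbbk^\times$ allowed in the conclusion — which is exactly why the lemma is stated ``up to homotopy and an overall invertible scalar.'' The rigidity lemma, Lemma~\ref{lem:3:uniqueness}, guarantees that a \emph{single} global scalar suffices, i.e.\ that the per-resolution discrepancies are forced to be equal, so no finer analysis is needed.
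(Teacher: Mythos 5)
Your main argument (steps (1)--(4)) is exactly the paper's proof: the two composites $\Phi^Y_{F_\rho^{L_1}}\circ\Phi^Y_F$ and $\Phi^Y_{F'}\circ\Phi^Y_{F_\rho^{L_0}}$ are induced by ambient isotopic link cobordisms in $\mathbb{R}^3\times I$, so functoriality of the type~Y theory gives the intertwining up to homotopy and an invertible scalar. The additional elementary-move case analysis and the discussion of orientation-reversal factors are not needed for this lemma (they pertain more to Lemma~\ref{lem:XY2}), but your primary route is correct and essentially identical to the paper's.
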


\begin{proof} This follows directly from the functoriality of the generalized Khovanov bracket
because the relevant compositions that need to be compared to establish the naturality of \eqref{eq:XY1} are induced by ambient
isotopic link cobordisms.
\end{proof}

Now note that the crossings of $D'$ correspond canonically to those of $D$,
and, under this correspondence, the type Y assignment $\epsilon'$ for $(D',o')$ can be seen as a type X
assignment for the original diagram $(D,o)$. We also have
\begin{equation}\label{eq:XY3}
\mathcal{R}(\llbracket D',o',\epsilon'\rrbracket)=\llbracket D,o,\epsilon'\rrbracket,
\end{equation}
where $\mathcal{R}$ denotes the endofunctor of $\komb{\mathcal{C}hron\mathcal{C}ob^3_{gen}}$ given by reflecting
objects across $\lambda\subset\mathbb{R}^2$, and chronological cobordisms $S\subset\mathbb{R}^2\times I$ (including the orientations of their critical points)
across $\lambda\times I\subset\mathbb{R}^2\times I$.
Given a link cobordism $F$, let $\Phi^X_F$ denote the type X map that $F$ induces
on the generalized Khovanov bracket of type X.
Assuming that $F'$ is the rotated link cobordism as in equation \eqref{eq:XY2}, we have:

\begin{lemma}\label{lem:XY2}
The equality in \eqref{eq:XY3} is natural, in the sense that
$\mathcal{R}(\Phi^Y_{F'})$ coincides with $\Phi^X_F$ up to an overall invertible scalar.
\end{lemma}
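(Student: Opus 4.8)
The plan is to reduce the statement to the unravelling of definitions together with the already-established uniqueness of the relevant chain maps. First I would spell out what the two sides of the asserted equality are concretely. The map $\Phi^X_F$ is built, according to the construction of this section, out of the elementary chain maps attached to the elementary cobordisms in a movie presentation of $F$, computed using type X sign assignments on the diagrams of $L_0$ and $L_1$ (and on all intermediate diagrams). On the other side, $\Phi^Y_{F'}$ is built from the same movie presentation, but for the rotated cobordism $F'$ and using type Y sign assignments; applying the reflection endofunctor $\mathcal{R}$ then transports everything back across $\lambda$. By \eqref{eq:XY3}, $\mathcal{R}$ carries the type Y bracket of $(D',o',\epsilon')$ to the type X bracket of $(D,o,\epsilon')$, so both $\mathcal{R}(\Phi^Y_{F'})$ and $\Phi^X_F$ are chain maps between the \emph{same} pair of complexes, namely type X brackets of $L_0$ and $L_1$ (after using Lemma~\ref{lem:3:signinv} to pass between the two type X sign assignments $\epsilon$ and $\epsilon'$, which only costs an overall invertible scalar). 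So the statement makes sense and what must be proved is that these two chain maps agree up to a scalar in $\Bbbk^\times$.

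The key observation is that $\mathcal{R}$ commutes with the passage to movies and to elementary cobordism maps, essentially on the nose. A movie presenting $F'$ is obtained from a movie presenting $F$ by applying the planar reflection $\rho_1$ (i.e.\ reflection across $\lambda$ followed by reversing all crossings) to every frame; the elementary transitions — births, deaths, saddles, Reidemeister moves — are carried to elementary transitions of the same type. So I would check, move by move among the five elementary cobordism types, that the chain map that the type Y construction assigns to the reflected elementary cobordism, pushed through $\mathcal{R}$, equals the chain map that the type X construction assigns to the original elementary cobordism, up to an invertible scalar. For births, deaths, and saddles the underlying cobordisms are literally reflections of one another and the sign assignments play no role, so the only subtlety is bookkeeping of local orientations under $\mathcal{R}$ (which reverses orientations of critical points — but this is absorbed by the orientation reversal relations in $\cobIIIgen$, at worst a factor of $\pi$). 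For the Reidemeister maps one uses that the homotopy equivalences recalled in Lemmas~\ref{lem:3:RIinv}--\ref{lem:3:RIIIinv} are unique up to invertible scalars (Lemma~\ref{lem:3:uniqueness} and the surrounding discussion): both $\mathcal{R}\circ(\text{type Y map for }F')$ and $(\text{type X map for }F)$ are chain homotopy equivalences of the prescribed cubical/cone form between the same complexes, hence they coincide up to a scalar.

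Composing these identifications frame-by-frame along the movie, and absorbing all the accumulated scalars into a single element of $\Bbbk^\times$, gives $\mathcal{R}(\Phi^Y_{F'}) = s\,\Phi^X_F$ for some $s\in\Bbbk^\times$, which is the claim. The main obstacle I anticipate is the careful accounting of local orientations under the reflection endofunctor $\mathcal{R}$: $\mathcal{R}$ reverses the orientation of the plane, hence sends clockwise deaths to counterclockwise deaths and flips the local orientations of splits and merges, so one must verify that the default-orientation conventions (Convention~\ref{conv:2:orientations}) used implicitly in the definitions of the type X and type Y elementary maps are matched up correctly, and that wherever they are not, the discrepancy is exactly a factor of $\pi$ accounted for by the orientation reversal relations. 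A secondary, purely organizational point is to confirm that the correspondence between type Y sign assignments for $(D',o')$ and type X sign assignments for $(D,o)$ — used in \eqref{eq:XY3} and in the sentence preceding it — is compatible with the edge maps of the cubes, i.e.\ that reflecting a diagram genuinely swaps the roles of type X and type Y faces in Figure~\ref{fig:commutativitytypes}; this is exactly the content of the remark that an orientation-reversing homeomorphism of $\mathbb{R}^2$ interchanges the two configurations in Figure~\ref{fig:xytype}, so it is available to us.
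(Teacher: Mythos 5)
Your overall skeleton matches the paper's: reduce to elementary cobordisms in a movie presentation of $F$, observe that $\mathcal{R}$ carries the type Y data for the reflected movie to type X data for the original, and check the elementary maps case by case. For births, deaths, and saddles your treatment is essentially the paper's: births are immediate, deaths pick up a uniform factor of $\pi$ from the reversed orientation of the planar deaths, and for saddles the decisive point is the one you relegate to a ``secondary organizational point,'' namely that reflection carries a valid type Y assignment on the cube of the larger diagram $\widetilde D'$ to a valid type X assignment on the cube of $\widetilde D$, so that $\mathcal{R}(\llbracket \widetilde D',\widetilde o',\widetilde \epsilon'\rrbracket)=\llbracket \widetilde D,\widetilde o,\widetilde \epsilon\rrbracket$. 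Saying ``the sign assignments play no role'' for saddles is misleading: the saddle map is defined through those larger cubes, and the X/Y swap of faces under an orientation-reversing homeomorphism is exactly what makes this case work.

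Where you genuinely diverge from the paper---and where the proposal is thin---is the Reidemeister case. You want to conclude by uniqueness: both maps are ``homotopy equivalences of the prescribed cubical/cone form, hence agree up to a scalar.'' But Lemma~\ref{lem:3:uniqueness} only covers cubical isomorphisms whose components are invertible multiples of \emph{identity} cobordisms between brackets of the same diagram; it does not apply to Reidemeister maps, whose components are saddles, births, deaths, and (for RIII) compositions involving homotopies. The paper's remark that the specific Reidemeister equivalences are unique up to overall invertible scalars is an unproved aside, and even granting it you must first verify that $\mathcal{R}(\Phi^Y_{F'})$ is of the prescribed form, i.e., that each reflected component is an invertible multiple of the corresponding type X component with a scalar that is \emph{uniform} across components. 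That verification is precisely the move-by-move orientation and sign bookkeeping the paper performs: for a negative RI move the reversals on the split and the death cancel; for a positive RI move a uniform $\pi$ appears; for RII the death reversal cancels against the change in the constant $a$; and for RIII one must additionally track the homotopy components (the death in $f$ with sign $-a\pi\epsilon_{0,0\star\alpha}$ and the birth in $g$ with sign $-\epsilon_{0,1\star\alpha}$ in the notation of \eqref{eq:3:RIImap}). So your plan is workable in outline, but as written the Reidemeister step---RIII in particular---rests on a uniqueness principle you have not established and whose application already requires the computation you are trying to bypass.
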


\begin{proof} It suffices to prove this when $F$ is an elementary link cobordism.
If $F$ is a birth, then the statement is obvious. If $F$ is a death, then
$\mathcal{R}(\Phi^Y_{F'})$ and $\Phi^X_F$ differ by a factor of $\pi$ because $\mathcal{R}$ reverses
the orientation of each planar death cobordism that appears in the chain map $\Phi^Y_{F'}$.

If $F$ is a saddle cobordism, then the chain maps $\Phi^X_F$ and $\Phi^Y_{F'}$
are defined via type X and type Y assignments $\widetilde{\epsilon}$ and $\widetilde{\epsilon}'$
on the cubes of larger link diagrams $(\widetilde{D},\widetilde{o})$ and $(\widetilde{D}',\widetilde{o}')$,
respectively. In this case, the lemma follows because $\mathcal{R}$ takes
$\llbracket\widetilde{D},\widetilde{o},\widetilde{\epsilon}\rrbracket$ to
$\llbracket\widetilde{D}',\widetilde{o}',\widetilde{\epsilon}'\rrbracket$.

It remains to consider the case where $F$ is induced by a Reidemeister move.
For a negative Reidemeister I move, the maps $\mathcal{R}(\Phi^Y_{F'})$ and $\Phi^X_F$ are the same as in each map the orientation reversals on split saddles and deaths will cancel.
For a positive Reidemeister I move, the maps $\mathcal{R}(\Phi^Y_{F'})$ and $\Phi^X_F$ differ by $\pi$ as orientation reversals on splits and deaths appear alone.
For a Reidemeister II move, the maps $\mathcal{R}(\Phi^Y_{F'})$ and $\Phi^X_F$ feature the same orientations on the saddle in the non-identity maps in the map with a death, the orientation change on the death is canceled by the change in the ``a'' term, and overall the maps are the same.

Finally, the components of the Reidemeister III chain maps
are given by identity maps, Reidemeister II chain maps, and compositions
af homotopies, differentials, and Reidemeister II chain maps.
we have already seen that components coming from differentials
and Reidemeister II chain maps are the same in
$\mathcal{R}(\Phi^Y_{F'})$ and $\Phi^X_F$.

The see that the same is true for components coming from homotopies,
we note that the relevant homotopies are given by the death in $f$
accompanied by the sign $-a\pi\epsilon_{0,0\star\alpha}$, and
by the birth in $g$, accompanied by the sign $-\epsilon_{0,1\star\alpha}$,
where we are referring to the notations from \eqref{eq:3:RIImap}.
The same analysis that we used for the maps $f$ and $g$ thus
Shows that the components coming from these homotopies
are also the same in $\mathcal{R}(\Phi^Y_{F'})$ and $\Phi^X_F$.
\end{proof}

Working in the refined setting from Subsection~\ref{subs:supergraded}, we will
now show that the functor $\mathcal{R}$ is related to the identity functor via a $\pi$-supernatural
isomorphism. Theorem~\ref{thm:equivalence} will then follow from equations \eqref{eq:XY1} and \eqref{eq:XY3} and from the preceding lemmas.

For an object $C\subset\mathbb{R}^2$ of the embedded category, let $S_C\colon C\rightarrow\mathcal{R}(C)$ denote
a cobordism which connects each component of $C$ to the corresponding reflected component of $\mathcal{R}(C)$
using a genus zero cobordism with no critical points. 

\begin{lemma}
Any choice of map $S_C$ is equal to any other choice.
\end{lemma}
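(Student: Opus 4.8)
The plan is to show that any two cobordisms $S_C, S_C'\colon C\to\mathcal{R}(C)$ of the specified form (genus zero, no critical points, connecting each component of $C$ to its reflection) represent the same morphism in $\cobIIIgen$. First I would observe that since neither cobordism has critical points and both induce the same bijection on connected components, they are abstractly diffeomorphic rel boundary; the only possible discrepancy is in the embedding into $\mathbb{R}^2\times I$ (equivalently $S^2\times I$), i.e.\ in how the connecting tubes are routed and how they are framed. So it suffices to check that the relations defining $\cobIIIgen$ allow one to pass between any two such embeddings.

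The key step is to reduce to a local comparison. Because $S_C$ and $S_C'$ have no critical points, their difference is entirely captured by an ambient isotopy of $\mathbb{R}^2\times I$ (not necessarily rel boundary) together with the effect of reversing local orientations at critical points---but here there are no critical points, so the superdegree is even and the cobordism is central with respect to the commutativity relations. Concretely, I would argue that any two such $S_C$ differ by a sequence of moves each of which is an isotopy of $\mathbb{R}^2\times I$ fixing the boundary, possibly composed with a ``finger move'' where one connecting tube is pushed across another; each such finger move is exactly an instance of one of the commutativity or cross relations (or the diamond relation), and since every critical-point count here is zero, the associated scalar is $1$. Alternatively, and more cleanly: stack $S_C'$ on top of the reverse of $S_C$ to get a closed genus-zero cobordism $\mathcal{R}(C)\to\mathcal{R}(C)$ with no critical points; such a cobordism is (a disjoint union of) identity cylinders up to isotopy, and the sphere and pruning relations force it to equal the identity morphism, whence $S_C = S_C'$.

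The main obstacle I anticipate is the embedding bookkeeping: two tubes connecting $C$ to $\mathcal{R}(C)$ may be linked or knotted relative to one another inside $\mathbb{R}^2\times I$, and genus-zero-ness alone does not immediately say these configurations are related by the listed relations without invoking embeddedness carefully. The diamond relation (which, as the earlier remark notes, encodes precisely the ability to switch between different embeddings of a tube) is the tool that handles this, and I would want to check that finitely many applications of the diamond relation together with boundary-fixing isotopies connect any two admissible $S_C$. Once that is established, the scalar is forced to be $1$ because the chronological degree $(a,b)$ of every cobordism involved is $(0,0)$, so no factors of $\pi$ or signs can enter, giving $S_C = S_C'$ on the nose in $\cobIIIgen$.
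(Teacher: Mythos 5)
Your proposal identifies the correct obstacle --- namely that two tubes in $S_C$ can be braided or linked relative to one another inside $\mathbb{R}^2\times I$ even though neither cobordism has any critical points --- but the tools you invoke to remove that obstacle are the wrong ones. The commutativity, cross, diamond, and pruning relations in $\cobIIIgen$ are all relations between cobordisms \emph{with} critical points (they reorder or re-embed births, deaths, merges, and splits), so none of them apply to $S_C$, which is a disjoint union of embedded annuli with no critical points at all. In particular your claim that a ``finger move'' where one tube is pushed across another is an instance of a commutativity or diamond relation is not correct, and the ``scalars are $1$ because every critical-point count is zero'' observation is circular: the whole question is whether a braided arrangement of tubes is even \emph{equal} (not just equal-up-to-scalar) to the unbraided one, and no relation about critical points addresses that. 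Your alternative argument --- composing $S_C'$ with the reverse of $S_C$ and invoking the sphere and pruning relations --- also fails because the resulting cobordism is a union of possibly linked annuli, not a union of closed spheres or cylinders dipping into a saddle, which is what those two relations require.

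The paper's actual argument uses a different and essential ingredient: the four-tube (4Tu) relation. Grouping the circles of $C$ by nesting level, any $S_C$ determines an element of a product of braid groups $\prod_i \mathcal{B}_{k_i}$; the 4Tu relation (in the form drawn after the lemma statement) forces each braid generator to square to the identity, collapsing each $\mathcal{B}_{k_i}$ to the symmetric group $\mathcal{S}_{k_i}$, and since $S_C$ by definition matches each component of $C$ to its own reflection, the resulting permutation is the identity. A further absorption argument (equation~\eqref{eq:wallDecomp}) handles circles that themselves contain nested circles. If you want to repair your proof along your own lines, you would need to show explicitly that ``pushing one tube across another'' is a consequence of 4Tu, and handle the nesting structure; without invoking 4Tu there is no way to eliminate the braiding.
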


\begin{proof}
The diagram $C$ in $\mathbb{R}^3$ consists of nested circles.  We can enumerate the circles in $C$, and divide the circles into disjoint sets corresponding to which other circle they are directly nested inside.
The diagram $\mathcal{R}(C)$ can inherit the enumeration from $C$ and thus for each set of circles $X_i$ with $k_i$ circles, the cobordism $S_C$ associates an element of $\mathcal{B}_{k_i}$ the braid group with $k_i$ generators.
Overall to $C$, $S_C$ associates an element of the group $\displaystyle\prod_{i\in I}\mathcal{B}_{k_i}$.  If all the circle in a set $X_i$ have no circles nested inside them, the four-tube relation gives us the following:
\begin{equation}
    \includeFig{S03F021}
\end{equation}
In the braid group this corresponds with the extra relation that generators square to identity.  
Making this additional identification we are left with $\mathcal{S}_{k_i}$ the symmetric group with $k_i$ generators.
If we could do this for each $X_i$, $S_C$ would induce an element of $\displaystyle\prod_{i\in I}\mathcal{S}_{k_i}$ in turn indicating that all choices for $S_C$ are equivalent as they correspond to the identity permutation of the circles.

If we look at the relation from before there is a gray region in the core of the right cobordism.  We can fill fill this region in with a cobordism indicating that the additional relation also applies to pairs of circles, as long as one them contains no other circles.

In the event that two circles in the set $X_i$ themselves contain circles consider the following decomposition
\begin{equation}\label{eq:wallDecomp}
\begin{tabular}{c}
\includeFig{S03F022} \\[6em]
\includeFig{S03F023}
\end{tabular}
\end{equation}

Using this decomposition the cobordisms inside of a cobordism can be ``absorbed'' into the wall of the outer cobordism leaving us with a sum of cobordism each of which can be seperated into a $B_i$
portion below and an $A_i$ portion above.  For each term of the sum we can leave the $B_i$ portion in place, and push the $A_i$ up past the braiding of the outside circle incuring an invertible scalar $k_i$.  
We can now apply the earlier relation, then slide the $B_i$ and $A_i$ sections back together again incurring the same invertible scalar $k_i$ as we never need to change the orientations on a saddle or change the chronology when applying the relation.
\end{proof}

\begin{figure}[H]
	\centering
	\includeFig{S03F024}
	\hspace{4em}
	\includeFig{S03F025}
	\caption{The conjugation of reflected split and death cobordisms with the map $S_C$ induce multiplication by $\pi$ relative to the unreflected split and death cobordisms. The vertical grey line denotes the plane of reflection}
	\label{eq:Relections}
\end{figure}

By making appropriate choices for the cobordisms $S_C$, one can check that the $S_C$ intertwines births and merges with their reflections,
and intertwine deaths and splits with their reflections up to a factor of $\pi$ (see Figure \eqref{eq:Relections}).
As such, the $S_C$ define a $\pi$-supernatural isomorphism between the identity functor and $\mathcal{R}$.
We can extend this
$\pi$-supernatural isomorphism to a $\pi$-supernatural isomorphism $\eta$
defined on the refinement of $\cobIIIgen$ by setting $\eta_{C\{m\}}:=\pi^m S_C\colon C\{m\}\rightarrow\mathcal{R}(C)\{m\}$ for $C$
as above and $m\in\mathbb{Z}_2$.
By the discussion at the end of Subsection~\ref{subs:supergraded},
we then get a chain isomorphism
\begin{equation}\label{eq:XY4}
\Psi:=\eta_{\llbracket D',o',\epsilon'\rrbracket} \colon\llbracket D',o',\epsilon'\rrbracket\longrightarrow\mathcal{R}(\llbracket D',o',\epsilon'\rrbracket)
\end{equation} 
with components
$\Psi_\alpha=\pi^{S(D',\alpha)}S_{D'_\alpha}$,
where here $\llbracket D',o',\epsilon'\rrbracket$ denotes the refined generalized
Khovanov bracket.

\begin{lemma}\label{lem:XY3} The chain isomorphism in \eqref{eq:XY4} is natural, in the sense that it intertwines the maps $\Phi^Y_{F'}$ and $\mathcal{R}(\Phi^Y_{F'})$ up to a possible factor of $\pi$.
\end{lemma}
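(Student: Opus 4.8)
The plan is to reduce the lemma to the $\pi$-supernaturality of $\eta$ on the category of chain complexes, and then to a purely bookkeeping verification that $\Phi^Y_{F'}$ is homogeneous. Recall from the end of Subsection~\ref{subs:supergraded} that the $\pi$-supernatural isomorphism $\eta$ between the identity functor and $\mathcal{R}$ on the refinement of $\cobIIIgen$ induces a $\pi$-supernatural transformation on $\operatorname{Kom}'(\mat{\cobIIIgen}^r)$: first let $\eta$ act diagonally on the summands of an object of $\mat{\cobIIIgen}^r$, then apply it termwise to a chain complex. By construction, the component of this induced transformation at the refined bracket $\llbracket D',o',\epsilon'\rrbracket$ is exactly the chain isomorphism $\Psi$ of \eqref{eq:XY4}, with components $\Psi_\alpha=\pi^{S(D',\alpha)}S_{D'_\alpha}$. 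So once I know that $\Phi^Y_{F'}$, regarded as a morphism in $\operatorname{Kom}'(\mat{\cobIIIgen}^r)$ between the refined brackets of the two ends of $F'$, is homogeneous of some superdegree $d\in\mathbb{Z}_2$, the defining identity \eqref{eqn:supernaturaldefinition} applied to $f=\Phi^Y_{F'}$ will give
\[
\Psi_1\circ\Phi^Y_{F'}=\pi^{d}\,\mathcal{R}(\Phi^Y_{F'})\circ\Psi_0,
\]
where $\Psi_0$ and $\Psi_1$ denote the two instances of \eqref{eq:XY4}. This is precisely the asserted naturality, the possible factor of $\pi$ being $\pi^{d}$; the non-refined statement then follows by applying the forgetful functor of Subsection~\ref{subs:supergraded}.

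It remains to argue that $\Phi^Y_{F'}$ is homogeneous in $\operatorname{Kom}'(\mat{\cobIIIgen}^r)$. Since superdegrees add under composition of chain maps and the corresponding prefactors $\pi^d$ multiply (with $\pi^2=1$), I may take $F'$ to be an elementary link cobordism. If $F'$ is a birth, death, or saddle, then $\Phi^Y_{F'}$ is a cubical chain map whose components are individual birth, death, merge, and split cobordisms in $\cobIIIgen$; although these carry different superdegrees in $\cobIIIgen$, the grading shifts $S(D',-)$ that enter the refined bracket were set up precisely so that all these components acquire one common superdegree, namely the reduction modulo $2$ of $(\chi(F')-|D'_1|+|D'_0|)/2$ — this is the superdegree bookkeeping recorded in Subsection~\ref{subs:supergraded} and Remark~\ref{rem:cobordismsuperdegree}. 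If $F'$ is induced by a Reidemeister move, then $\chi(F')=0$ and $|D'_0|=|D'_1|$, and I would read off homogeneity (of superdegree $0$) from the explicit chain maps in Lemmas~\ref{lem:3:RIinv}, \ref{lem:3:RIIinv}, and \ref{lem:3:RIIIinv}, using that the refined differentials have superdegree $0$ and that the homotopies appearing inside the Reidemeister~III chain maps are themselves homogeneous of superdegree $0$.

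The main obstacle is exactly this last homogeneity bookkeeping, and the Reidemeister~III move is the delicate case: there the components of $\Phi^Y_{F'}$ are not merely saddles and identity maps but also composites of Reidemeister~II chain maps with homotopies and differentials, so I would need to verify that each homotopy used is homogeneous in the refined category and that superdegrees line up along every such composite. No new geometric input is required beyond the $\pi$-supernaturality of $\eta$ established above. Finally, combining this lemma with Lemmas~\ref{lem:XY1} and \ref{lem:XY2} and with equations \eqref{eq:XY1} and \eqref{eq:XY3} is what will yield the equivalence of the type~X and type~Y generalized Khovanov functors.
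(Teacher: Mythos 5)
Your proposal is correct and follows essentially the same path as the paper's proof: cite the $\pi$-supernaturality of $\eta$ (and hence of $\Psi$ as a component of the induced transformation on chain complexes), invoke the homogeneity of $\Phi^Y_{F'}$ with respect to the supergrading, and conclude via equation~\eqref{eqn:supernaturaldefinition}. The only difference is that the paper simply cites Remark~\ref{rem:cobordismsuperdegree} for the homogeneity claim, whereas you re-derive it case by case; this is fine but not a genuinely distinct route.
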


\begin{proof}
By Remark~\ref{rem:cobordismsuperdegree}, the map $\Phi^Y_{F'}$ that the link cobordism $F'$ induces on the refined generalized Khovanov bracket
is homogeneous with respect to the supergrading. Since $\Psi$ is a component of a $\pi$-supernatural
transformation (by its definition in~\eqref{eq:XY4}), Lemma~\ref{lem:XY3} now follows from equation~\eqref{eqn:supernaturaldefinition} in the definition of a $\pi$-supernatural
transformation.
\end{proof}

Combing equations \eqref{eq:XY1}, \eqref{eq:XY3}, and \eqref{eq:XY4}, we now obtain a homotopy equivalence
\begin{equation}\label{eq:XY5}
\llbracket D,o,\epsilon\rrbracket\,\simeq\,\llbracket D',o',\epsilon'\rrbracket\,\stackrel{\Psi}{\cong}\,
\mathcal{R}(\llbracket D',o',\epsilon'\rrbracket)\,=\,\llbracket D,o,\epsilon'\rrbracket
\end{equation}
between the type Y complex $\llbracket D,o,\epsilon\rrbracket$ and the type X complex
$\llbracket D,o,\epsilon'\rrbracket$. Moreover,
Lemmas~\ref{lem:XY1}, \ref{lem:XY2}, and \ref{lem:XY3}
imply that this
homotopy equivalence is natural up to invertible scalars,
which proves Theorem~\ref{thm:equivalence}.
As an immediate corollary, we obtain a correct proof of the following lemma,
which was first stated as Lemma 2.4 in \cite{ORS2007}.

\begin{lemma}\label{lem:XY4} Type X and type Y sign assignments yield isomorphic odd Khovanov complexes.
\end{lemma}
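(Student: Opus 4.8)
The plan is to derive Lemma~\ref{lem:XY4} as a direct specialization of Theorem~\ref{thm:equivalence}, which has just been established by the chain of equations \eqref{eq:XY1}, \eqref{eq:XY3}, \eqref{eq:XY4}, \eqref{eq:XY5}. Concretely, I would begin by recalling that odd Khovanov homology is obtained from the generalized theory by applying the TQFT functor $F\colon\cobIIIgen\to\mbox{Ab}$ from Subsection~\ref{subs:TQFTs} after specializing $\pi=-1$ (equivalently, by working in the category $\cobIIIgen$ with $\pi=-1$ throughout, as in the Remark following the definition of $\textnormal{Kom}^b_{\pm h,\pm\pi h}$). The point is that every construction used to prove Theorem~\ref{thm:equivalence} is natural in the coefficient ring and survives this specialization.

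First I would observe that the homotopy equivalence \eqref{eq:XY5},
\[
\llbracket D,o,\epsilon\rrbracket\,\simeq\,\llbracket D',o',\epsilon'\rrbracket\,\stackrel{\Psi}{\cong}\,\mathcal{R}(\llbracket D',o',\epsilon'\rrbracket)\,=\,\llbracket D,o,\epsilon'\rrbracket,
\]
is a homotopy equivalence in $\komb{\mat{\cobIIIgen}}$ between a type~Y complex and a type~X complex for the \emph{same} enhanced diagram $(D,o)$. Applying the (chronological, hence well-defined on $\cobIIIgen$) TQFT functor $F$ termwise turns this into a homotopy equivalence of complexes of abelian groups; after setting $\pi=-1$ this is precisely a homotopy equivalence of odd Khovanov complexes, one built from a type~X sign assignment and the other from a type~Y sign assignment. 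Since a homotopy equivalence of complexes of free abelian groups that is a termwise isomorphism (which \eqref{eq:XY4} is, and \eqref{eq:XY1} can be taken to be up to homotopy — but combined with Lemma~\ref{lem:3:signinv} and Lemma~\ref{lem:3:orientinv} one gets an honest chain isomorphism on the nose after adjusting sign assignments and crossing orientations) yields isomorphic complexes, the lemma follows.

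The one subtlety I would be careful about — and the main obstacle — is that \eqref{eq:XY1} is only a homotopy equivalence, not an isomorphism of complexes, whereas the original claim in \cite[Lemma 2.4]{ORS2007} asserts an \emph{isomorphism} of chain complexes. To recover this stronger statement I would note that the homotopy equivalence \eqref{eq:XY1} comes from an ambient isotopy (a rotation), and that the underlying diagrams $D$ and $D'$ are related by a planar reflection followed by crossing-switches; in particular, after composing with the cubical chain isomorphisms from \eqref{eq:3:cohiso} (Lemma~\ref{lem:3:signinv} and Lemma~\ref{lem:3:orientinv}), both type~X and type~Y complexes can be presented on the \emph{same} cube of resolutions of $(D,o)$, differing only in the edge signs $\epsilon$ versus $\epsilon'$. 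The composite $\Psi\circ(\text{cubical isomorphism})$ is then a cubical chain isomorphism with scalar components, which is exactly the form of isomorphism described in \cite{ORS2007}. I would close by remarking that this recovers the original assertion of \cite[Lemma 2.4]{ORS2007}, whose published proof was flawed, and that the same argument applies verbatim to reduced odd Khovanov homology and to the generalized bracket over $\Bbbk$.
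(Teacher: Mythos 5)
Your first step---specializing \eqref{eq:XY5} at $\pi=-1$ (via the TQFT functor) to obtain a homotopy equivalence between the type~X and type~Y odd Khovanov complexes of $(D,o)$---is exactly how the paper begins. The gap is in how you try to upgrade this to an honest chain isomorphism. You claim that \eqref{eq:XY1} can be turned into an isomorphism ``on the nose'' by invoking Lemma~\ref{lem:3:signinv} and Lemma~\ref{lem:3:orientinv}, so that the composite becomes a cubical chain isomorphism with scalar components between the two complexes on the same cube. This cannot work: those lemmas only compare pairs $(o,\epsilon)$ and $(o',\epsilon')$ that are valid for the \emph{same} commutativity cocycle $\sigma_{L,o}$, i.e.\ for the same type convention, whereas here $D$ and $D'$ are genuinely different diagrams (the reflected diagram with all crossings switched), and the map \eqref{eq:XY1} is induced by the rotation isotopy, hence by a movie of Reidemeister moves; it is neither cubical nor an isomorphism. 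Worse, the cubical scalar isomorphism you assert at the end cannot exist in general: if $\epsilon$ is type~Y valid and $\epsilon'$ is type~X valid on the cube of $(D,o)$, then intertwining the differentials by components $r_\alpha\operatorname{id}$ would force $\delta r=\epsilon'/\epsilon$, but $\delta(\epsilon'/\epsilon)$ equals $\pi$ (that is, $-1$ in the odd theory) on every face of type X or Y, so $\epsilon'/\epsilon$ is not even a cocycle whenever such faces occur. Producing such a cubical isomorphism is essentially what the flawed proof in \cite{ORS2007} attempted, and the paper explicitly warns afterwards that the isomorphism of Lemma~\ref{lem:XY4} might not be natural.

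The missing ingredient is purely algebraic: the two odd complexes are bounded complexes of finitely generated free abelian groups with identical chain ranks (same cube, only the edge signs differ), and for such complexes a homotopy equivalence already implies an isomorphism. This follows from a Smith normal form argument: each complex splits as a direct sum of elementary two-term complexes $\mathbb{Z}\xrightarrow{d}\mathbb{Z}$ and free summands with zero differential, and the homotopy type together with the chain ranks determines this decomposition up to isomorphism. Replacing your attempted geometric upgrade by this observation closes the gap and recovers the paper's proof; the price, compared with what you wanted, is that the resulting isomorphism is abstract and need not be cubical or natural.
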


\begin{proof} By \eqref{eq:XY5}, the odd Khovanov complexes of $(D,o,\epsilon)$ and $(D,o,\epsilon')$ are homotopy equivalent.
The two complexes are also bounded complexes of finitely generated free abelian groups, which share the same chain ranks.
An argument related to the Smith normal form of an integer matrix now shows that two such complexes are homotopy equivalent if and only if they are isomorphic.
The lemma thus follows.
\end{proof}

Note that the isomorphism from the proof of Lemma~\ref{lem:XY4} might not be natural.

\subsection{Definition of the Generalized Khovanov Functor}
In this subsection, we will define the generalized Khovanov chain maps assigned to each elementary link cobordism. Note that the definition
of such maps was already sketched in \cite{Pu2015}.

For a link cobordism $F$ from the link diagram $L_0$ to the link diagram $L_1$, we will denote the associated chain map on the generalized Khovanov bracket by $\Phi=\Phi_F\colon\llbracket L_0\rrbracket\rightarrow\llbracket L_1\rrbracket$. The chain maps that we will assign to birth, death, and saddle
cobordisms will preserve the cubical structure of the Khovanov bracket, meaning that their nonzero components
will be between corresponding vertices in the cubes of $L_0$ and  $L_1$.
For such maps $\Phi$, the particular component going from $\llbracket L_0\rrbracket_\alpha$ to $\llbracket L_1\rrbracket_\alpha$ will be denoted $\Phi_\alpha$, while the planar cobordism underlying $\Phi_\alpha$ will be denoted $\varphi_\alpha$.  

\subsubsection{Birth Cobordisms}

Let $F\colon L_0\rightarrow L_1$ be a four-dimensional birth cobordism, so that $L_1$ is the disjoint union of $L_0$ with a circle.  For each $\alpha$ we define $\varphi_\alpha$ as the identity cobordism on the components of $\llbracket L_0\rrbracket_\alpha$ together with planar birth cobordism ending in the new circle in $\llbracket L_1\rrbracket_\alpha$.  As births are even, the $\varphi_\alpha$ already commute with the differentials,
and we thus obtain a chain map $\Phi\colon\llbracket L_0\rrbracket\rightarrow\llbracket L_1\rrbracket$ by setting $\Phi_\alpha:=\varphi_\alpha$.

\subsubsection{Death Cobordisms}

 Let $F\colon L_0\rightarrow L_1$ be a four-dimensional death cobordism, so that $L_0$ is the disjoint union of $L_1$ with a circle.  For each $\alpha$ we define $\varphi_\alpha$ as the identity cobordism on components of $\llbracket L_0\rrbracket_\alpha$ that are also present in $\llbracket L_1\rrbracket_\alpha$ and as a planar death cobordism on the lone circle in $\llbracket L_0\rrbracket_\alpha$.  Unlike births, deaths can either commute or $\pi$-commute with differentials. To make them commute, we need to multiply the $\varphi_\alpha$ by $\pi^{S(L_0,\alpha)}$, where
 $S(L_0,\alpha)$ denotes the quantity from equation~\eqref{eq:3:Smap}.
 Thus, the chain map $\Phi\colon\llbracket L_0\rrbracket\rightarrow\llbracket L_1\rrbracket$ assigned to an elementary death cobordism has
 components $\Phi_\alpha:=\pi^{S(L_0,\alpha)}\varphi_\alpha$.

\begin{remark}\label{rem:deathpisupernatural} The chain map $\Phi_F$ assigned to a death cobordism has the following
categorical interpretation. Let $L_0$ and $L_1$ be as above, and let $D\subset\mathbb{R}^2$ be
a disk that does not intersect $L_1$ and that contains the unique component $c$ in $L_0\setminus L_1$.
Let $\mathcal{C}$ be the category defined in the same way as $\cobIIIgen$,
but with the requirement that
the objects of $\mathcal{C}$ are contained in $\mathbb{R}^2\setminus D$, and the morphisms
are contained in $(\mathbb{R}^2\setminus D)\times I$. The inclusion $\mathbb{R}^2\setminus D\rightarrow\mathbb{R}^2$
induces a functor $\mathcal{C}\rightarrow\cobIIIgen$, which is faithful by \cite[Cor.~88]{NW2024} but not full. 
In particular, this functor allows us to identify $\mathcal{C}$ with a subcategory of $\cobIIIgen$.
Now consider the functor $\mathcal{F}\colon\mathcal{C}\rightarrow\cobIIIgen$ given by sending an object of $\mathcal{C}$ to its union with $c$, and a morphism
to its union with $c\times I$. For each object $x\in\mathcal{C}$, there is an obvious
morphism $\eta_x\colon\mathcal{F}(x)\rightarrow x$ given by a death cobordism in $\mathbb{R}^2\times I$
that annihilates $c$. Working in the supergraded refinement from Subsection~\ref{subs:supergraded},
we can further define a morphism
$\eta_{x\{s\}}\colon \mathcal{F}(x\{s\})\rightarrow x\{s\}$ by $\eta_{x\{s\}}:=\pi^s\eta_x$,
where $x\{s\}$ denotes the object $x$ with supergrading shifted by $s\in\mathbb{Z}_2$.
Using that deaths have odd superdegree, it is then easy to see
that the morphisms $\eta_{x\{s\}}$ provide a $\pi$-supernatural transformation
between  (the supergraded refinements of) the
functor $\mathcal{F}$ and the inclusion-induced functor $\mathcal{C}\rightarrow\cobIIIgen$.
As explained in Subsection~\ref{subs:supergraded}, any $\pi$-supernatural transformation
extends to chain complexes, and the death map $\Phi_F$ can now be identified with
the component
$\Phi_F=\eta_{\llbracket L_1\rrbracket}\colon\llbracket L_0\rrbracket\rightarrow\llbracket L_1\rrbracket$
of the extended $\pi$-supernatural transformation.
\end{remark}

\subsubsection{Saddle Cobordisms}

Let $F\colon L_0\rightarrow L_1$ be a four-dimensional saddle cobordism.  For each $\alpha$, the planar diagrams $\llbracket L_0\rrbracket_\alpha$ and $\llbracket L_1\rrbracket_\alpha$ are
related by a saddle in the neighborhood where $L_0$ and $L_1$ differ. 
Consider the larger link diagram $L$ which has one extra crossing in this neighborhood, so that
replacing this crossing by its 0- or 1-resolution yields the diagrams $L_0$ and $L_1$, respectively.
By construction, the resolution cube of $L$ contains the resolution cubes of $L_0$ and $L_1$ as codimension-1 subcubes,
and by Lemma~\ref{lem:3:CWsubcomplex}, we can find a sign assignment for the resolution cube of $L$ which restricts
to the given sign assignments for $L_0$ and $L_1$. Consider the components
$\partial_{\alpha\star}\colon\llbracket L_0\rrbracket_\alpha\rightarrow\llbracket L_1\rrbracket_\alpha$ of the differential in $\llbracket L\rrbracket$ 
that pass between the two subcubes.
These components are of the form
\begin{equation}\label{eq:3:saddlemap}
\partial_{\alpha\star}=\epsilon_{\alpha\star}d_{\alpha\star}
\end{equation}
for planar saddle cobordisms
$d_{\alpha\star}\colon\llbracket L_0\rrbracket_\alpha\rightarrow\llbracket L_1\rrbracket_\alpha$, where the coefficients $\epsilon_{\alpha,\star}\in\Bbbk^\times$ ensure that the $\partial_{\alpha\star}$ anticommute 
with the differentials in $\llbracket L_0\rrbracket$ and $\llbracket L_1\rrbracket$. We can thus define
a chain map  $\Phi\colon\llbracket L_0\rrbracket\rightarrow\llbracket L_1\rrbracket$ by setting $\Phi_\alpha:=(-1)^{\deg(\alpha)}\partial_{\alpha\star}$.

We claim that the above definition specifies the saddle map $\Phi$ uniquely up to an overall invertible scalar. 
Indeed, let $Q\cong [0,1]^{n+1}$ denote the hypercube
corresponding to $L$, and $Q_0$ and $Q_1$ denote the subcubes corresponding
to $L_0$ and $L_1$. For fixed sign assignments on $Q_0$ and $Q_1$,
any two choices
for the $\epsilon_{\alpha\star}$ in \eqref{eq:3:saddlemap} then differ by a relative 1-cocycle in
$Z^1(Q,Q_0\cup Q_1;\Bbbk^\times)$. The claim now follows because
\[
Z^1(Q,Q_0\cup Q_1;\Bbbk^\times)
\cong H^1(Q/(Q_0\cup Q_1);\Bbbk^\times)\cong H^1(S^1;\Bbbk^\times)\cong\Bbbk^\times.
\]
We further observe that changing the orientation of the extra crossing in $L$
leaves the chain complex of $L$ and hence the chain map $\Phi$
unchanged if one adjusts the sign assignment for $L$ appropriately
(this follows essentially from the proof Lemma 2.3 in {\cite{ORS2007}}). Similarly,
changing the orientation of a crossing in $L_0$ or $L_1$ leaves
$\Phi$ unchanged if one changes the sign assignments for $L$ appropriately.
Finally, $\Phi$ is natural in the choice of the sign assignments for $L_0$ and $L_1$,
and thus it induces a morphism $\llbracket L_0\rrbracket_M\rightarrow \llbracket L_1\rrbracket_M$
in the category $M''$ from Subsection~\ref{subs:colimit}
(the latter is also trivially true for the chain maps that we assigned
to birth and death cobordisms).

We briefly consider the special case in which $L_0$ is a union of two disjoint link diagrams
and $F$ merges these diagrams into a connected sum diagram $L_1$.
In this case, we can use the same sign assignments for $L_0$ and $L_1$
because these diagrams have the same commutativity cocylces $\sigma_{i,j}$.
To construct the saddle map $\Phi$, we then do note need to pass to the larger
link diagram $L$. Instead, the components of $\Phi$ are given by
the same underlying saddle cobordisms $\varphi_\alpha\colon\llbracket L_0\rrbracket_\alpha\rightarrow\llbracket L_1\rrbracket_\alpha$ 
as in the general case, but now all of these saddles are merges, and the
$\epsilon_{\alpha\star}$ are trivial
(i.e., $\Phi_\alpha:=\varphi_\alpha=d_{\alpha\star}$). The case where 
$L_0$ is a connected sum and $F$ splits this diagram into the corresponding
disjoint union of two diagrams is similar.
We can again use the same sign assignments for $L_0$ and $L_1$, but 
now all of the cobordisms $\varphi_\alpha\colon\llbracket L_0\rrbracket_\alpha\rightarrow\llbracket L_1\rrbracket_\alpha$
are split saddles, and the coefficients $\epsilon_{\alpha\star}$ are given explicitly by
the same terms that we used for death cobordisms (i.e., $\Phi_\alpha:=\pi^{S(L_0,\alpha)}\varphi_\alpha=\pi^{S(L_0,\alpha)}d_{\alpha\star}$).

\subsubsection{Reidemeister Type Cobordisms}

To Reidemeister I and II type cobordisms, we assign the chain maps
shown in  \eqref{eq:3:RImap}, \eqref{eq:3:RImapalt},
and \eqref{eq:3:RIImap}. To Reidemeister III type cobordisms,
we assign the chain map induced by the homotopy
equivalences in \eqref{eq:3:conesL} and \eqref{eq:3:conesR}. Note that
this chain map has the following general form:

\begin{equation}
    \includeFig{S03F009}
\end{equation}

\begin{remark}\label{rem:cobordismsuperdegree} One can check that the chain map assigned to an oriented link cobordism $F\subset\mathbb{R}^3\times I$ is homogeneous
with respect to the supergrading if one works with the supergraded refinement of the generalized
Khovanov bracket defined in Subsection~\ref{subs:supergraded}.
Moreover, the superdegree of this chain map coincides with the superdegree of $F$ itself.
Here it is understood that a birth cobordism in $\mathbb{R}^3\times I$ has superdegree $0$,
and a death cobordism in $\mathbb{R}^3\times I$ has superdegree $1$.
A saddle cobordism in $\mathbb{R}^3\times I$ has superdegree $0$ if it
merges two link components into a single link component, and superdegree $1$ if it splits a link component into two.
\end{remark}

\section{Proof of Functoriality}\label{s:functoriality}

We now have all the elements in place to prove Theorem~\ref{thm:main} from the introduction.
More precisely, we will show:

\begin{theorem}\label{thr:GKHfunctor}
    The generalized Khovanov bracket extends to a functor from the category $\cobIV$ to the category
    $\textnormal{Kom}^b_{\pm h,\pm\pi h}(\mat{\cobIIIgen})$.
\end{theorem}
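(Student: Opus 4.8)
The plan is to use the combinatorial presentation of $\cobIV$ supplied by Theorem~\ref{thm:CS}. Every morphism of $\cobIV$ is represented by a movie, that is, a finite sequence of elementary cobordisms (planar isotopies, births, deaths, saddles, and Reidemeister moves), and in the previous subsection we assigned to each elementary cobordism $F$ a chain map $\Phi_F$; composing these produces a chain map for any movie. Since the target is $\textnormal{Kom}^b_{\pm h,\pm\pi h}(\mat{\cobIIIgen})$, in which homotopic maps and maps differing by a scalar in $\Bbbk^\times$ are identified, it suffices to check that this assignment descends to the quotient of movies by the three classes of moves in Theorem~\ref{thm:CS}: planar ambient isotopy moves, chronological movie moves, and the fifteen Carter--Saito movie moves. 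First I would record that each elementary chain map is already well defined in the target: the Reidemeister maps are the strong deformation retractions (and their inclusions) of Lemmas~\ref{lem:3:RIinv} through~\ref{lem:3:RIIIinv}, whose cubical representatives are unique up to an element of $\Bbbk^\times$ by Lemma~\ref{lem:3:uniqueness}, while the birth, death, and saddle maps were shown above to be unique up to $\Bbbk^\times$ and natural in the auxiliary data $(o,\epsilon)$. Functoriality thus reduces to move invariance.

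Next I would dispatch the easy moves. Planar ambient isotopy moves hold because a planar isotopy acts on the cube of resolutions compatibly with the differentials and with the chosen chain maps. Chronological movie moves hold because vertically reordering distant critical points, or distant Reidemeister moves, alters the composite chain map only through the commuting/$\pi$-commuting behavior controlled by the superdegree in Subsection~\ref{subs:targetcategory}, hence only by a factor in $\Bbbk^\times$, which is invisible in the target. The type~I movie moves (moves~1--5) follow directly from the relations $r\circ i=\id$ and $i\circ r\simeq\id$ satisfied by the Reidemeister retractions and inclusions, possibly after composing with the scalar isomorphisms of~\eqref{eq:3:cohiso} to reconcile sign assignments and invoking the uniqueness of the Reidemeister maps up to $\Bbbk^\times$. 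Finally, the remaining ``do-nothing'' moves~6--10 are handled uniformly by Lemma~\ref{lem:t2}: each exhibits a composite endomorphism of $\llbracket D\rrbracket$ assembled from Reidemeister chain maps acting on a tangle of the type covered by that lemma, and Lemma~\ref{lem:t2} forces such an endomorphism to be multiplication by an element of $\Bbbk^\times$, hence the identity in the target.

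The substantive work is the non-reversible movie moves~11--15 (the ones involving births and deaths), which must be checked by computing the two composite chain maps explicitly. For moves~13 and~14 I would first choose convenient crossing orientations and sign assignments, as flagged in the introduction, so that most components of the two composites become literally equal, reducing the comparison to a handful of underlying cobordisms, which are then matched using the relations of $\cobIIIgen$ (notably the Frobenius, associativity, and cross relations). What remains is to see that the surviving invertible scalars on the two sides differ by a single global element of $\Bbbk^\times$; this is pinned down exactly as in the proof of Lemma~\ref{lem:3:RIIIinv}, by recognizing the difference as a relative $1$-cocycle on a CW pair whose quotient is $S^1$, so that $H^1(S^1;\Bbbk^\times)\cong\Bbbk^\times$ yields the single scalar. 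The remaining type~III moves are treated the same way, using the $\pi$-supernatural description of the death map from Remark~\ref{rem:deathpisupernatural} to keep track of the factors $\pi^{S(\cdot,\alpha)}$ introduced by death and split components.

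I expect movie moves~13 and~14 (and, to a lesser extent, the other non-reversible moves) to be the main obstacle. Unlike moves~6--10, they lie outside the scope of Lemma~\ref{lem:t2}, so the two induced chain maps must be written out and compared with full bookkeeping of crossing orientations, cubical signs $\epsilon$, and death/split coefficients $\pi^{S(\cdot,\alpha)}$. The delicate point is not the underlying cobordisms, which agree on the nose once the setup is chosen well, but verifying that the two decorating families of scalars in $\Bbbk^\times$ are globally proportional; the relative-cohomology argument on $S^1$ is precisely what delivers this, and a judicious choice of sign assignments is what keeps that argument short. Once all moves are verified, $F\mapsto\Phi_F$ descends to a functor $\cobIV\to\textnormal{Kom}^b_{\pm h,\pm\pi h}(\mat{\cobIIIgen})$, which is the assertion of the theorem.
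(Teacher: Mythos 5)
Your overall architecture matches the paper's: reduce to the moves of Theorem~\ref{thm:CS}, use the homotopy-equivalence property of the Reidemeister maps for moves 1--5, Lemma~\ref{lem:t2} for moves 6--10, and explicit sign-managed computations (with judicious crossing orientations and sign assignments) for the type~III moves, with move~15 falling out of Lemma~\ref{lem:3:RIIIinv}. The place where your proposal has a genuine gap is the chronological movie moves, which you dispatch in one sentence by appealing to the superdegree-controlled commutation of disjoint cobordisms. That observation applies to individual cobordisms in $\cobIIIgen$ whose heights are exchanged; it does not directly apply to the composite \emph{chain maps}, whose components carry vertex-dependent coefficients ($\pi^{S(L,\alpha)}$ for deaths and splits, $(-1)^{\deg(\alpha)}\epsilon_{\alpha\star}$ for saddles) and, for Reidemeister moves, are built from globally chosen sign assignments, mapping-cone data and homotopies. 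In particular, for two distant Reidemeister-type cobordisms $F$ and $G$ the two composites are \emph{not} termwise reorderings of disjoint cobordisms at all: the intermediate diagrams differ, there is no planar-composition/tensor structure available in this framework (the paper deliberately avoids tangle invariants), and the RIII maps involve homotopy components, so nothing forces the discrepancy to be a single global scalar rather than a vertex-dependent one. The paper closes exactly this hole with a new idea: it forms the commutator $H=FGF^{-1}G^{-1}$, uses Lemma~\ref{lem:worm} to connect the two disks by a thin tunnel so that the union of the two local tangles becomes a tangle of the form $C\cup\beta$ satisfying the hypotheses of Lemma~\ref{lem:t2}, and then concludes from that lemma that $H$ induces $\pm\id$ or $\pm\pi\id$. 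Your proposal contains no substitute for this step.

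Relatedly, even the ``critical point'' reorderings need more than the superdegree remark: commuting a death past another elementary cobordism is handled in the paper via the supergraded refinement together with the $\pi$-supernatural description of the death map (Remark~\ref{rem:deathpisupernatural}) and the homogeneity statement of Remark~\ref{rem:cobordismsuperdegree}, precisely because the factors $\pi^{S(\cdot,\alpha)}$ vary over the cube; commuting two saddles requires the anticommuting-square computation with the $(-1)^{\deg(\alpha)}$ prefactors; and commuting a saddle past a Reidemeister map uses that the Reidemeister chain map intertwines the differentials of the enlarged diagrams. You invoke the $\pi$-supernatural machinery only for the type~III moves, where it is not the crux. The rest of your outline (including handling the surviving scalars in moves 13--14 by a relative-cocycle/$H^1(S^1;\Bbbk^\times)$ argument rather than the paper's direct choice of compatible sign assignments, and noting that in move 13 the two sides agree only after applying the (4Tu)/(H)-type relations, not on the nose) is reasonable, but the functoriality statement is not established until the Reidemeister--Reidemeister reordering is proved, so as written the proof is incomplete.
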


As an immediate consequence of Theorem~\ref{thr:GKHfunctor}, we obtain
Corollary~\ref{cor:main}
from the introduction, which states that odd Khovanov homology is functorial in $\mathbb{R}^3\times I$ up to sign.

In Section 3 we specified what the generalized Khovanov bracket assigns to movies of link cobordisms.  The entirety of this section is devoted to the proof of Theorem \ref{thr:GKHfunctor}, where we show how our construction respects all the possible 
movies moves from Theorem~\ref{thm:CS}.
We will begin with a discussion of the type I movie moves, followed by a general argument to show that each side of the type II movie moves produces homotopic chain maps.  Next, we will examine type III movie moves with individual arguments in the forward direction, in the reverse direction, and for alternative variants.  Finally, we will show the functoriality of odd Khovanov homology with respect to chronological movie moves. We do not need to spend additional time ensuring our functor respects
planar ambient isotopy moves.
The chain maps induced by the two sides of such moves have the same matrix entries in $\cobIIIgen$
since the underlying chronological cobordisms are ambient isotopic in $\mathbb{R}^2\times I$ via
an ambient isotopy that preserves the chronological structure.

\subsection{Type I Movie Moves}

\begin{figure}[H]
    \centering
    \begin{tabular}{ c }
        Type \\
        I \\ 
        Movie \\
        Moves
    \end{tabular}
    \(= \left[
    \addstackgap[0.25em]{\begin{tabular}{c}\includeMov{S04-1F001}\\MM1\end{tabular}}
    \begin{tabular}{c}\includeMov{S04-1F002}\\MM2\end{tabular} 
    \begin{tabular}{c}\includeMov{S04-1F003}\\MM3\end{tabular} 
    \begin{tabular}{c}\includeMov{S04-1F004}\\MM4\end{tabular} 
    \begin{tabular}{c}\includeMov{S04-1F005}\\MM5\end{tabular} 
    \right]\)
    \caption{Movie moves 1 through 5}
    \label{fig:4:TIMM}
\end{figure}

The left-hand sides of the first five movie moves are given by doing and undoing a Reidemeister move, while the right-hand sides (which are not shown in Figure \ref{fig:4:TIMM}) are given by trivial movies of identity cobordisms. Since the chain maps assigned to Reidemeister moves are homotopy
equivalences, it is clear that the left-hand sides induce chain maps homotopic to the identity, and thus our functor respects these movie moves.

\subsection{Type II Movie Moves}\label{subs:typeII}
 
\begin{figure}[H]
    \centering
    \begin{tabular}{ c }
        Type \\
        II \\ 
        Movie \\
        Moves
    \end{tabular}
    \(= \left[
    \begin{tabular}{c}\includeMov{S04-1F006}\\MM6\end{tabular}
    \begin{tabular}{c}\includeMov{S04-1F007}\\MM7\end{tabular} 
    \begin{tabular}{c}\includeMov{S04-1F008}\\MM8\end{tabular} 
    \begin{tabular}{c}\includeMov{S04-1F009}\\MM9\end{tabular} 
    \addstackgap[0.25em]{\begin{tabular}{c}\includeMov{S04-1F010}\\MM10\end{tabular}} 
    \right]\)
    \caption{Movie moves 6 through 10}
    \label{fig:4:TIIMM}
\end{figure}

Any link diagram on which a movie move is carried out consists of two tangles glued together: namely the inside part $t$ where the movie move is carried out, and an outside part $T$ which is carried through the movie by identity. 
Type II movie moves permit a slightly stricter decomposition wherein the
inside tangle is isotopic to a braid diagram.

The left-hand sides of these movie moves are shown
in Figure~\ref{fig:4:TIIMM} and are given by sequences 
of Reidemeister moves that start and end on identical frames,
while the right-hand sides are given by trivial movies of identity cobordisms.
To show invariance under these moves, we will establish a general lemma
pertaining to the case where the inside tangle is a braid diagram.
More generally, we will allow the inside tangle to be of the form
$t=C\cup\beta\subset B\cup A$
where
\begin{itemize}
\item $B$ is a disk region and $A$ is an annular region surrounding it,
\item $C\subset B$ is a crossingless tangle with no closed components,
\item $\beta\subset A$ is an affine braid, consisting of strands that are monotonous
in the radial coordinate on $A$ and that connect the points of $\partial C$ to
points on the outer boundary of $A$.
\end{itemize}
We then have:

\begin{lemma}\label{lem:t2}
    Let $F$ be a link cobordism with a link diagram $D$ as both its initial and terminal frame. Furthermore, suppose $F$ is generated by performing a sequence of Reidemeister moves on a tangle $t\subset D$ of the form $t=C\cup\beta$ where $C$ and $\beta$ are as above.
     Then the chain map that $F$ induces on $\llbracket D\rrbracket$ is homotopic to $\pm\id$ or $\pm\pi\id$.
\end{lemma}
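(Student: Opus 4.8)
Here is the plan I would follow.

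\smallskip

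\textbf{Setup and first reduction.} Since the chain maps attached to Reidemeister I, II and III moves (Lemmas~\ref{lem:3:RIinv}, \ref{lem:3:RIIinv}, \ref{lem:3:RIIIinv}) are strong deformation retractions, their composite $\Phi_F$ is a homotopy auto-equivalence of $\llbracket D\rrbracket$; the whole task is to pin it down up to $\Bbbk^\times=\{\pm1,\pm\pi\}$. The first move is to reduce to the case where the outer tangle $T$ carries no crossings. All Reidemeister moves in $F$ are supported in a neighbourhood of $t$, which is disjoint from the crossings of $T$; consequently every chain map and every homotopy used to build $\Phi_F$ respects the decomposition of $\llbracket D\rrbracket$ as a cube over the resolutions of the crossings of $T$ (each matrix entry is a cobordism that is the identity on the $T$-part and goes between equal $T$-resolutions). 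It therefore suffices to prove the lemma for each such block, i.e. under the extra hypothesis that $T$ is crossingless, so that the only crossings of $D$ lie in the braid $\beta$.

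\smallskip

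\textbf{The core argument.} Now $\llbracket D\rrbracket$ is the cube of resolutions of the crossings of $\beta$, and each vertex $D_\gamma=C\cup\beta_\gamma\cup T$ is a crossingless diagram, without closed components except possibly for a few circles created when a braid crossing is resolved. I would first argue that $\Phi_F$ is homotopic to a chain map $\Psi$ that is diagonal with respect to this cube. This is the step that uses that $\beta$ is a braid: by Theorem~\ref{thm:CS} (restricted to the braid region) the moves of $F$ which touch $\beta$ are a sequence of braid relations (Reidemeister III moves), cancelling pairs of crossings (Reidemeister II moves), and Reidemeister I moves which create or destroy a crossing adjacent to an arc of $C$; using the explicit deformation retractions of Lemmas~\ref{lem:3:RIinv}--\ref{lem:3:RIIIinv} one absorbs all off-diagonal contributions into a homotopy, keeping track of the invertible scalars produced by Convention~\ref{conv:2:orientations}. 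Each diagonal component $\Psi_\gamma\colon D_\gamma\to D_\gamma$ is then an endomorphism, in $\cobIIIgen$ (after resolving the finitely many closed components via the sphere and torus relations), of a crossingless diagram without closed components. By the four-tube, sphere and torus relations together with Putyra's result that a genus-zero cobordism without closed components has trivial annihilator, every such endomorphism is a $\Bbbk$-multiple of the identity cobordism; and because $\Phi_F\simeq\Psi$ is a homotopy equivalence, that multiple must be a unit. Finally, an argument in the spirit of Lemma~\ref{lem:3:uniqueness} — the differential of $\llbracket D\rrbracket$ is built from saddles, which again have trivial annihilator, so the chain-map condition forces the ratio of the scalars at two vertices joined by an edge — shows that a single scalar $s\in\{\pm1,\pm\pi\}$ determines all the $\Psi_\gamma$. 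Hence $\Phi_F$ is homotopic to $s\cdot\id$, which is the assertion of the lemma.

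\smallskip

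\textbf{The main obstacle.} The delicate point is the reduction of $\Phi_F$ to a cube-respecting chain map $\Psi$: a priori $\Phi_F$ is an alternating composite of strong deformation retractions between complexes of different sizes, because the intermediate frames of $F$ carry extra crossings in the braid region, and it is not formal that the composite is homotopic to something diagonal. I would handle this by inducting on the number of moves in $F$ that actually interact with $\beta$, treating a Reidemeister I move against an arc of $C$ exactly as in the proof of Lemma~\ref{lem:3:RIinv}, a braid-relation move through the mapping-cone description in the proof of Lemma~\ref{lem:3:RIIIinv}, and a cancelling pair of braid crossings through the explicit retraction of Lemma~\ref{lem:3:RIIinv}. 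The remaining work — propagating the scalars $\pm1,\pm\pi$ correctly through these reductions — is bookkeeping rather than a new idea, since at each stage one only composes cobordisms that are identities outside a small disk and never needs to change a chronology or a local orientation except in the controlled ways dictated by Convention~\ref{conv:2:orientations}.
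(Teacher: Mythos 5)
Your outline follows the same high-level philosophy as the paper — reduce $\Phi_F$ to a cube-preserving chain map, then pin down the per-vertex scalars by a rigidity argument in the spirit of Lemma~\ref{lem:3:uniqueness} — but the two concrete steps your plan rests on are not carried out, and the first of them is exactly the obstacle that the paper's proof is engineered to avoid.

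\textbf{The unresolved step is genuine, not bookkeeping.} You yourself flag the reduction of $\Phi_F$ to a cube-respecting map $\Psi$ as the delicate point, and you propose to handle it by inducting on the moves and ``absorbing all off-diagonal contributions into a homotopy.'' But no mechanism for doing this is given, and it is not a routine consequence of Lemmas~\ref{lem:3:RIinv}--\ref{lem:3:RIIIinv}. The chain maps attached to Reidemeister moves pass through intermediate complexes with extra crossings in the braid region, and the Reidemeister~III chain map in particular is a map between mapping cones that does not preserve any cubical structure; there is no a priori reason that the alternating composite becomes chain-homotopic to a diagonal map, and the paper never claims this. Likewise your preliminary reduction to crossingless $T$ by decomposing $\llbracket D\rrbracket$ into blocks over the resolutions of $T$ is implicitly a planar-composition argument. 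In the generalized/odd setting the sign cocycle $\sigma_{L,o}$ does not factor into an ``inner $\beta$'' and ``outer $T$'' part, which is precisely why the paper stresses (in the discussion surrounding Lemma~\ref{lem:t2}) that its proof avoids any planar composition properties of the generalized bracket and is independent of tangle extensions of odd Khovanov homology.

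\textbf{What the paper does instead.} Rather than trying to diagonalize $\Phi_F$ through the braid cube, the paper introduces an auxiliary diagram $D'$ containing the affine braid $\beta\beta^{-1}$ and a Reidemeister~II homotopy equivalence $\Psi'\colon\llbracket D\rrbracket\to\llbracket D'\rrbracket$ obtained by gluing $\beta\beta^{-1}$ onto $C$. Conjugating, the map $\Phi''=(\Psi')^{-1}\circ\Phi'\circ\Psi'$ is induced by a cobordism $F''$ supported entirely over the crossingless tangle $C$. Because $C$ is crossingless, $\Phi''$ trivially preserves the cube over the crossings of $\beta T$ (no reduction to crossingless $T$ is needed), its homotopies must vanish by degree reasons, and each component $\Phi''_\alpha$ is forced to be a $\Bbbk^\times$-multiple of the identity since $C$ has no closed components. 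Lemma~\ref{lem:3:uniqueness} then gives $\Phi''=c\,\mathrm{id}$. A separate and short argument, using the sphere relation, shows that the conjugating and intertwining maps only distort the answer by a global unit, proving Claim~\ref{claim:t21}. This conjugation trick is the missing idea your proposal would need, and it replaces your two problematic reductions simultaneously.
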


\begin{proof}
Let $t=C\cup\beta=:C\beta$ be as in the lemma, and $T$ the arbitrary tangle on the outside.
The link cobordism $F$ is the identity cobordism on the tangle $T$, and on the tangle $C\beta$ it is a cobordism $f$ generated by
the sequence of Reidemeister moves. Let $\Phi=\Phi_F$ be the map that $F$ induces on the generalized Khovanov bracket of $D$:
\begin{equation}
\includeFig{S04-2F002}
\end{equation}
To prove the lemma, we must show that $\Phi\simeq c\operatorname{id}$ for a $c\in\Bbbk^\times$.  Consider the alternative diagram $D'$ of the same link obtained by gluing the affine braid $\beta^{-1}\beta$ onto $T$, as shown in \eqref{dig:4:proofII1}. The sequence of Reidemeister moves considered previously gives rise to a cobordism $F'$ from $D'$ to $D'$, and this cobordism induces a map $\Phi'$ on the generalized Khovanov bracket of $D'$:
\begin{equation}\label{dig:4:proofII1}
\includeFig{S04-2F003}
\end{equation}
We can also consider the cobordism $G$ from $D$ to $D'$ which is the identity cobordism on $C\beta$ and a cobordism $g$ comprised of many Reidemeister II type cobordisms from $T$ to $\beta^{-1}\beta T$.  Let $\Psi$ be the homotopy equivalence that $G$ induces between the generalized Khovanov brackets of $D$ and $D'$:
\begin{equation}
\includeFig{S04-2F004}
\end{equation}
The chain maps induced by $F$, $F'$, and $G$ fit into the following diagram:
\begin{equation}\label{dig:phiphiprime}
\includeFig{S04-2F005}
\end{equation}
\begin{claim}\label{claim:t21}
The preceding diagram commutes up to homotopy and overall invertible scalars.
\end{claim}

We will wait until the end of the proof to prove this claim. The claim implies that to prove Lemma \ref{lem:t2}, it is sufficient to
show:

\begin{claim}\label{claim:t22}
$\Phi'\simeq c\operatorname{id}$ for $c\in\Bbbk^\times$.
\end{claim}

To prove the latter claim, we consider an alternative decomposition of $D'$ in which the affine braid $\beta\beta^{-1}$ is glued onto the perimeter of $C$ as shown in the bottom half of \eqref{dig:4:proofII2}. Let $G'$ be the link cobordism from $D$ to $D'$ which is given by the identity cobordism of $\beta T$ and by a cobordism $g'$ comprised of many Reidemeister II type cobordisms from $C$ to $C\beta\beta^{-1}$.

The link cobordism $G'$ induces an alternative homotopy equivalence $\Psi'$ between the generalized Khovanov brackets of $D$ and $D'$:

\begin{equation}\label{dig:4:proofII2}
\includeFig{S04-2F006}
\end{equation}
Now consider the following automorphism of $\llbracket D\rrbracket$ where $\left(\Psi'\right)^{-1}$ is the homotopy inverse of $\Psi'$:
\begin{equation}
    \Phi''\coloneqq\left(\Psi'\right)^{-1}\circ\Phi'\circ\Psi'
\end{equation}
By the definition of $\Phi''$, the following diagram commutes up to homotopy.
\begin{equation}
\includeFig{S04-2F007}
\end{equation}

To prove the Claim~\ref{claim:t22}, it is thus sufficient to show the following.

\begin{claim}\label{claim:t23}
$\Phi''= c\operatorname{id}$ for $c\in\Bbbk^\times$.
\end{claim}

To prove the latter, we will use that the nontrivial part of the chain map $\Phi''$ is localized to the left-hand tangle $C$ in the decomposition of $D$.  That is, there is a  cobordism $F''$---that induces $\Phi''$ on the generalized Khovanov bracket---given by composing $G'$ played in reverse with $F'$ and $G'$.  

\begin{equation}
\includeFig{S04-2F008}
\end{equation}
Now note that the link cobordism $F''$ acts entirely on the crossingless tangle $C$. 
In particular, this implies that the nonzero components of the induced
chain map $\Phi''$ preserve the resolutions of all crossings of $\beta T$.
Since $D$ has no other crossings, this further implies
that the chain map $\Phi''$ preserves the cubical structure of $\llbracket D\rrbracket$,
and is thus given
by components $\Phi_\alpha''\colon\llbracket D\rrbracket_\alpha\rightarrow\llbracket D\rrbracket_\alpha$.
We further note:

\begin{itemize}
\item $\Phi''$ is a homotopy equivalence since it is a composition of homotopy equivalences.
\item The corresponding homotopies also preserve the cubical structure.
\item Since homotopies need to have homological degree $-1$, this implies that these homotopies must be
zero maps.
\item It follows that the homotopy equivalence $\Phi''$ is actually a chain isomorphism.
\item It follows that each $\Phi''_\alpha$ is an isomorphism.
\item Since $F''$ is an identity cobordism on $\beta T$, the components $\Phi''_\alpha$
must be given by identity cobordisms above the resolutions $(\beta T)_\alpha$.
\item The portion of $\Phi''_\alpha$ that lies above $C$ must necessarily be a scalar multiple of the identity cobordism of $C$
for an invertible scalar $r_\alpha\in\Bbbk^\times$.
This follows because $\Phi''_\alpha$ is an isomorphism (of quantum degree zero) and $C$ has no closed components.
\end{itemize}

In conclusion, we see that $\Phi''$ is a chain isomorphism which has the structure described in Lemma~\ref{lem:3:uniqueness}. It thus follows
that $\Phi''=c\operatorname{id}$ for an overall scalar $c\in\Bbbk^\times$.

All that is left is to prove Claim \ref{claim:t21}.
Since $\Psi$ is induced by a sequence of  Reidemeister II moves, we can assume without loss of generality
that each of the affine braids $\beta$ and $\beta^{-1}$ contains a single crossing, and $\Psi$ is induced by a single  Reidemeister II move.
To prove Claim \ref{claim:t21}, we then need to consider the composition
\[
\Psi^{-1}\circ\Phi'\circ\Psi
\]
where $\Psi^{-1}$ denotes the homotopy inverse of $\Psi$ from~\eqref{eq:3:RIImap}.
An examination of the chain maps in~\eqref{eq:3:RIImap} shows that they are given by identity components and the components $f$ and $g$
from \eqref{eq:3:RIImap} (not to be confused with the equally-named cobordisms considered before).
Thus, the above composition is equal to
\[
\Phi'_{10}+f\circ\Phi'_{01}\circ g
\]
where
$\Phi'_{10}$ and $\Phi'_{01}$ denote the restrictions of $\Phi'$ to the lower and upper vertex in the middle
of the square at the bottom of \eqref{eq:3:RIImap}, respectively. Now note that at the locations where $f$ and $g$ are nontrivial,
$\Phi'_{01}$ is given by identity cobordisms, and thus each component of $f\circ\Phi'_{01}\circ g$ contains a 2-sphere coming from the death in $f$
and the birth in $g$. Because of the (S) relation, this means that these components are zero, whence
\[
\Psi^{-1}\circ\Phi'\circ\Psi=\Phi'_{10}.
\]
Finally, $\Phi'$ and $\Phi$ are induced by the same sequence of Reidemeister moves. This implies that the chain maps
$\Phi'_{10}$ and $\Phi$ coincide, except when $F$ contains a positive Reidemeister I move, in which case they
may differ by a global factor of $\pi$ coming from the factors of $\pi$ in \eqref{eq:3:RImap}. Either way, Claim~\ref{claim:t21} follows.
\end{proof}

The lemma just shown immediately implies invariance
 under type II movie moves up to 
 homotopy and overall invertible scalars.
 It can be seen as a replacement for Bar-Natan's results about $Kh-$simple tangles from
 \cite{Bn2005}.
 We remark that an argument vaguely related to
 our proof 
 of Lemma~\ref{lem:t2} 
 has appeared in~\cite{Sa2018} in a somewhat different context, although we were unaware of the
 latter when we first found our proof.

\subsection{Type III Movie Moves}

The last five movie moves involve births or deaths in addition to Reidemeister type cobordisms.  Additionally, these movie moves viewed either forward or in reverse must be treated as independent moves.  
\begin{convention} The following conventions will be used throughout the proof of invariance under movie moves 11 through 15.
    \begin{enumerate}[label=\textnormal{\bf{\alph*.}}]
        \item The initial frame of a movie is the $0$\ordth frame.
        \item Specific resolutions of the $i$\ordth frame $D_i$ will be denoted by $D_{i,\alpha}$.
        \item For components of differentials within $\llbracket D_i\rrbracket$, we will use
        $d_{i,{\star}\alpha}$ to denote the underlying planar cobordism and $\partial_{i,{\star}\alpha}\coloneqq\epsilon_{i,{\star}\alpha}d_{i,{\star}\alpha}$ for the actual component.
        \item For components of chain maps from $\llbracket D_i\rrbracket$ to $\llbracket D_{i+1}\rrbracket$, we will use $\varphi_{i,\alpha}^{i+1,\beta}$ to denote the underlying planar cobordisms and $\Phi_{i,\alpha}^{i+1,\beta}\coloneqq\epsilon_{i,\alpha}^{i+1,\beta}\varphi_{i,\alpha}^{i+1,\beta}$ for the actual component.
        \item Within a movie move, we may affix an arrow to any of the following symbols $\mathbin{\Diamond}\in\{D,d,\epsilon,\partial,\varphi,\Phi\}$
        to denote if it comes from the left movie ($\overleftarrow{\mathbin{\Diamond}}$) or the right movie ($\overrightarrow{\mathbin{\Diamond}}$).
    \end{enumerate}
\end{convention}

We will often use that the maps assigned to link cobordisms are natural
in the choice of the additional data $(o,\epsilon)\in M(L)$.
To show that these maps these maps are invariant under a particular movie move, it is therefore sufficient
to consider a particular choice of $(o,\epsilon)$.

\subsubsection{Movie Move 11}

\begin{figure}[H]
    \centering
    \includeFig{S04-3F100}
    \caption{Movie move 11}
    \label{fig:4:MM11}
\end{figure}

All variations of movie move 11, viewed in either direction and up to orientations are the relations we referred to as \enquote{pruning} relations earlier which at most induce multiplication by $\pi$.  The direction we view the cobordism and orientations placed on the critical points determine if a $\pi$ appears in the relation. These are global decisions, thus $\overleftarrow{\Phi}\pieq\overrightarrow{\Phi}$.

\subsubsection{Movie Move 12}

\begin{figure}[H]
    \centering
    \includeFig{S04-3F200}
    \caption{Movie move 12}
    \label{fig:4:MM12}
\end{figure}

Let $\overleftarrow{\Phi}$ and $\overrightarrow{\Phi}$ be the chain maps assigned to the two sides
of Figure~\ref{fig:4:MM12}, and $B$ denote the birth chain map that corresponds to the first transition in each movie. Moreover, let $R$
denote the homotopy inverse of the Reidemeister I map that appears in $\overleftarrow{\Phi}$.
Note that since $R$ is a proper left-inverse of this Reidemeister I map, we have $R\circ \overleftarrow{\Phi}=B$.

On the other hand, $R\circ\overrightarrow{\Phi}$ is a chain map that preserves the cubical structure, and whose
components are locally given by morphisms $\emptyset\rightarrow\bigcirc$ of quantum degree $1$.
Now note that the only such morphisms are scalar multiples of planar birth cobordisms, and an argument
similar to the one used in the proof of Lemma~\ref{lem:t2} shows that the multiple appearing at one vertex
of the cube determines the multiple at all other vertices (this uses that any planar cobordism consisting of a birth and a distant saddle has trivial annihilator in $\Bbbk$). Thus,
\[
R\circ\overrightarrow{\Phi}=aB=aR\circ\overleftarrow{\Phi}
\]
for a global scalar $a\in\Bbbk$, and so $\overrightarrow{\Phi}\simeq a\overleftarrow{\Phi}$. Reversing the roles of $\overleftarrow{\Phi}$ and $\overrightarrow{\Phi}$, we also get $\overleftarrow{\Phi}\simeq b\overrightarrow{\Phi}$ for
a global scalar $b\in\Bbbk$. Consequently, $\overleftarrow{\Phi}\simeq ab\overleftarrow{\Phi}$, and post-composing with $R$ yields
$B\simeq abB$. Since the latter homotopy is cubical, we further have $B=abB$, and restricting to any vertex of the cube now yields $ab=1$ because planar birth cobordisms have trivial annihilator in $\Bbbk$. In conclusion,
\[
\overrightarrow{\Phi}\simeq a\overleftarrow{\Phi}
\]
for an invertible scalar $a\in\Bbbk$. The same argument also works mutatis mutandis for all other version of movie move 12.

\subsubsection{Movie Move 13}

\begin{figure}[H]
    \centering
    \includeFig{S04-3F300}
    \caption{Movie move 13}
    \label{fig:4:MM13}
\end{figure}

\subsubsection*{Movie Move 13 Forward:}

{\noindent}Let $L$ be the larger link diagram that appears on the left-hand side of equation \eqref{fig:MM13FEQ}.
The chain maps that appear in the two cones of this equation correspond precisely to the maps induced
by the saddles on the two sides of Figure~\ref{fig:4:MM13}.
\begin{equation}
    \label{fig:MM13FEQ}
    \includeFig{S04-3F301}
\end{equation}
We will orient the crossings as in the following diagram. This produces canonical orientations on all saddles.

\begin{equation}
    \includeTang{S04-3F307}
\end{equation}

Consider the generalized Khovanov cube of the larger link diagram $L$
with signs $\psi^1_\alpha,\psi^2_\alpha,\psi^3_\alpha,\psi^4_\alpha\in\Bbbk^\times$ on the shown edges. 
\begin{equation}
    \includeFig{S04-3F308}
    \label{fig:MM13Cube}
\end{equation}
In this cube, we can choose identical internal sign assignments for the three diagrams
that correspond to the left three vertices of the square in \eqref{fig:MM13Cube}.
Since the left two edge maps in \eqref{fig:MM13Cube} are always merges,
the faces that they can form when paired with maps from an outside crossing
will be from group C.
This means they are commuting faces, and thus we can choose
$\psi^1_\alpha=\psi^2_\alpha=(-1)^{deg(\alpha)}$.
The right two edge maps in \eqref{fig:MM13Cube} have essentially the same underlying saddle cobordism, and thus
for any valid sign assignment, we can get a new valid sign assignment by exchanging each $\psi^3_\alpha$ with each $\psi^4_\alpha$.

To compute the saddle maps from the movie move, we can therefore make
the following assignments, where the extra terms involving $deg(\alpha)$ are to ensure that these
maps commute with the differentials.
\begin{equation}
\addtolength{\arraycolsep}{2em}    
\begin{array}{cc}
\overleftarrow{\epsilon}_{1,{\star}\alpha}=\overrightarrow{\epsilon}_{1,{\star}\alpha}=\psi_\alpha^1=(-1)^{\deg(\alpha)} &
\overleftarrow{\epsilon}_{1,0\alpha}^{2,0\alpha}=\overrightarrow{\epsilon}_{1,0\alpha}^{2,0\alpha}=(-1)^{\deg(\alpha)}\psi_\alpha^1=1\\
\overleftarrow{\epsilon}_{1,1\alpha}^{2,1\alpha}=\overrightarrow{\epsilon}_{1,1\alpha}^{2,1\alpha}=(-1)^{\deg(\alpha)+1}\psi_\alpha^3 &
\overleftarrow{\epsilon}_{2,{\star}\alpha}=\overrightarrow{\epsilon}_{2,{\star}\alpha}=\psi_\alpha^4
\end{array}
\addtolength{\arraycolsep}{-2em}    
\end{equation}

We arrive at the following diagrams describing the chain maps induced by the two sides of the movie move.

\begin{equation}
    \includeFig{S04-3F302}
    \label{fig:MM13FL}
\end{equation}

\begin{equation}
    \includeFig{S04-3F303}
    \label{fig:MM13FR}
\end{equation}

Note that $\overleftarrow{D}_{0,\alpha}$ and $\overrightarrow{D}_{0,\alpha}$ are isotopic and thus produce the same values on the $S$-map. Using the (4Tu) and (2H) relations, we arrive at the following equation for a fixed degree $\alpha$

\begin{equation}
    \includeFig{S04-3F309}
\end{equation}

To see that this is zero, we look at the two possible ways to close the tangle. If the closure is \includeSym{4} then

\begin{equation}
    \includeFig{S04-3F314}
\end{equation}

which equals zero by a variation of the (4Tu) relation.  Alternatively if we use the other closure, \includeSym{5}, then 

\begin{equation}
    \includeFig{S04-3F315}
\end{equation}

which also becomes zero if we apply the (H) relation and then the same variation of the (4Tu) relation.  If we define the overall sign assignment as discussed for each $\alpha$, we thus obtain $\overleftarrow{\Phi}=-\overrightarrow{\Phi}$.

\subsubsection*{Movie Move 13 Reverse:}

{\noindent}In the reverse direction, we can still view the saddles in the left and right movies
as coming from larger link diagrams, but not from the same larger link diagram:
\begin{equation}
    \includeFig{S04-3F304}
    \label{fig:MM13REQ}
\end{equation}

We will use the following crossing orientation so that both sides of the movie feature faces from either a group X or Y for any fixed resolution $\alpha$ of the outside crossings.

\begin{equation}
    \includeTang{S04-3F310}
\end{equation}

The chain maps induced by the two sides of the movie move are now given
by the following diagrams, where the split cobordisms $\overleftarrow{\varphi}_{0,0\alpha}^{1,0\alpha}$ and $\overrightarrow{\varphi}_{0,0\alpha}^{1,0\alpha}$ have canonical orientations (pointing towards the reader) because of the above orientation choice.

\begin{equation}
    \includeFig{S04-3F305}
    \label{fig:MM13RL}
\end{equation}

\begin{equation}
    \includeFig{S04-3F306}
    \label{fig:MM13RR}
\end{equation}

As in the forward direction, we use the same internal sign assignments for any vertices 
whose associated diagrams are ambient isotopic, or ambient isotopic up to adding a small circle.
The cobordisms $\overleftarrow{\varphi}_{0,{\star}\alpha}$ and $\overrightarrow{\varphi}_{0,{\star}\alpha}$ in the bottom rows of \eqref{fig:MM13RL} and \eqref{fig:MM13RR} are essentially the same. Likewise, $\overleftarrow{\varphi}_{0,1\alpha}^{1,1\alpha}$ and $\overrightarrow{\varphi}_{0,1\alpha}^{1,1\alpha}$ are essentially the same.  

The merge cobordisms $\overleftarrow{\varphi}_{1,{\star}\alpha}$ and $\overrightarrow{\varphi}_{1,{\star}\alpha}$ are not identical, but together with maps from external crossings, they form faces that are either from group C or X and are thus always commuting, and therefore can share sign assignments.  Similarly, in relation to maps that come from a given external crossing, the split cobordisms $\overleftarrow{\varphi}_{0,0\alpha}^{1,0\alpha}$ and $\overrightarrow{\varphi}_{0,0\alpha}^{1,0\alpha}$ are either both commuting---forming face from group C---or both $\pi$-commuting, forming a face from group $\pi$. It follows that for any given valid sign assignment for \eqref{fig:MM13RL}, we can define a valid sign assignment for
\eqref{fig:MM13RR} by

\begin{equation}
\addtolength{\arraycolsep}{1.5em}    
\begin{array}{cccc}
\overrightarrow{\epsilon}_{0,{\star}\alpha} = \overleftarrow{\epsilon}_{0,{\star}\alpha} &
\overrightarrow{\epsilon}_{0,0\alpha}^{1,0\alpha} = \overleftarrow{\epsilon}_{0,0\alpha}^{1,0\alpha} &
\overrightarrow{\epsilon}_{0,1\alpha}^{1,1\alpha} = \overleftarrow{\epsilon}_{0,1\alpha}^{1,1\alpha} &
\overrightarrow{\epsilon}_{1,{\star}\alpha} = \overleftarrow{\epsilon}_{1,{\star}\alpha}
\end{array}
\addtolength{\arraycolsep}{-1.5em}    
\end{equation}

This in turn yields the following equation.

\begin{equation}
    \includeFig{S04-3F311}
\end{equation}

The sign assignment defined for a given $\alpha$ propagates to the entire cube, thus $\overleftarrow{\Phi}=\pi\overrightarrow{\Phi}$.

\subsubsection*{Movie Move 13 Alternative Variants:} 

{\noindent}There are additional variants of this move that must be considered.  The move is built off of a saddle and a Reidemeister I move, so we need to consider the movie moves shown in \eqref{fig:MM13AF} and \eqref{fig:MM13AR}, where negative crossings appear in the Reidemeister I moves in place of the positive crossings.  
\begin{equation}
    \includeFig{S04-3F312}
    \label{fig:MM13AF}
\end{equation}
\begin{equation}
    \includeFig{S04-3F313}
    \label{fig:MM13AR}
\end{equation}
The arguments will still work as before, but now the maps will be supported in degree one, not zero.  In the forward direction, the larger link diagrams resemble the two versions of a Reidemeister II move, which were present in the original reverse direction. Therefore we use a similar argument to the original reverse direction.  Likewise, in the reverse direction, the larger link diagram resembles that from the original forward direction, but with opposite crossings. The argument thus follows that of the original, forward direction.

\subsubsection{Movie Move 14}

\begin{figure}[H]
    \centering
    \includeFig{S04-3F400}
    \caption{Movie move 14}
    \label{fig:4:MM14}
\end{figure}

For our computations, we will use a version of this move where all frames are rotated $90^\circ$ clockwise.

\subsubsection*{Movie Move 14 Forward:} 

{\noindent}The final diagram of both sides decomposes into the following square with the same shared sign assignment on both sides of the move:
\begin{equation}
    \includeFig{S04-3F403}
    \label{fig:MM14topsquare}
\end{equation}
We will endow the crossings with the following orientations so that the above square forms a face from group Y.
\begin{equation}
    \includeTang{S04-3F407}
    \label{dig:MM14squareOrient}
\end{equation}
We will further equip the two vertices in the middle of \eqref{fig:MM14topsquare} with the same internal sign assignments. Note that this is possible because of Lemma~\ref{lem:3:CWsubcomplex}, and because the diagrams corresponding to these vertices represent ambient isotopic tangles up to changing the location of the trivial circle. Since the two right maps in \eqref{fig:MM14topsquare} are always merges, and always commute
with maps from outside crossings, we can choose $\epsilon_{2,1\star\alpha}=\epsilon_{2,\star 1\alpha}$ (and similarly $\epsilon_{2,\star0\alpha}=-\pi\epsilon_{2,0\star\alpha}$ since the square in \eqref{fig:MM14topsquare} is from group Y).

As each side of the move begins with no crossings in the tangle, the chain maps assigned to the two sides of the move are supported only in the zero degree.  We will consider the diagrams of each side of the move restricted to the relevant degree.

\begin{equation}
    \includeFig{S04-3F401}
    \label{fig:MM14FL}
\end{equation}

\begin{equation}
    \includeFig{S04-3F402}
    \label{fig:MM1FR4}
\end{equation}

Using our orientation conventions, the Reidemeister II chain maps naturally induce the canonical orientation on the saddle on the left-hand side of the move, while they generate a non-canonically oriented saddle on the right-hand side.  We can correct this orientation on the saddle at no cost, as the saddle is always a merge cobordism. In particular, the cobordisms underlying the right maps in each diagram simplify in the following manner.

{\noindent}\begin{minipage}{0.5\textwidth}
    \begin{equation}
    \includeFig{S04-3F404}
    \end{equation}
\end{minipage}%
\begin{minipage}{0.5\textwidth}
    \begin{equation}
    \includeFig{S04-3F405}
    \end{equation}
\end{minipage}

By our sign choices, we further have $(\epsilon_{2,\star 1\alpha})(\epsilon_{2,1\star\alpha})=1$.
Thus, for each $\alpha$ we have the following equation

\begin{equation}
    \includeFig{S04-3F406}
\end{equation}

and thus $\overleftarrow{\Phi}=-\overrightarrow{\Phi}$.

\subsubsection*{Movie Move 14 Reverse:}

{\noindent}In the reverse direction, an almost identical argument applies, but there is one additional consideration. Namely, we have

{\noindent}\begin{minipage}{0.5\textwidth}
    \begin{equation}
    \includeFig{S04-3F408}
    \end{equation}
\end{minipage}%
\begin{minipage}{0.5\textwidth}
    \begin{equation}
    \includeFig{S04-3F409}
    \end{equation}
\end{minipage}

In the above equations, the simplifications involve eliminating a split followed by a death, which can induce a factor of $\pi$.  These factors of $\pi$ do not impose any issues for our argument as a factor of $\pi$ is incurred on both sides of the move.  All other parts of the argument in the forward direction apply to the reverse direction.

\subsubsection*{Movie Move 14 Alternative Variant:}

{\noindent}There is an additional variant of movie move 14 in which the circle passes under the strand instead of over. The argument for the initial variant also applies in this alternative setting.

\subsubsection{Movie Move 15}

\begin{figure}[H]
    \centering
    \includeFig{S04-3F500}
    \caption{Movie move 15}
    \label{fig:4:MM15}
\end{figure}

Invariance under movie move 15 follows immediately from Lemma~\ref{lem:3:RIIIinv} because the maps
assigned to the two sides of this move are precisely the maps $\Phi_L$ and $\Phi_R$ from the lemma
(or the reverse directions of these maps, which can be treated similarly).

\subsection{Chronological Movie Moves}

The link cobordism category allows ambient isotopies that produce changes in the chronology of the planar cobordisms that appear in the induced chain maps.  Therefore, in order to show the functoriality of generalized Khovanov homology up to sign, we must show that exchanging the order of pairs of distant link cobordisms at most induces an overall change in sign.

\subsubsection {Commuting Deaths with other Cobordisms}
If we work with the refined generalized Khovanov bracket from
Subsection~\ref{subs:supergraded}, then all chain maps
assigned to oriented link cobordisms are homogeneous with respect
to the supergrading by Remark~\ref{rem:cobordismsuperdegree}.
Moreover, the death map on the refined Khovanov bracket can be
viewed as a component of a $\pi$-supernatural transformation
by Remark~\ref{rem:deathpisupernatural}. 

We further claim that
 the functor $\mathcal{F}$ from Remark~\ref{rem:deathpisupernatural}
 takes the chain map $\Phi_F$ induced by a link cobordism
$F\subset(\mathbb{R}^2\setminus D)\times\mathbb{R}\times I$ to the chain map induced by the link cobordism $F\cup (c\times I)$
(where $c$ and $D$ are as in the remark).
Indeed, this is obvious when $F$ is a birth or a saddle cobordism or a Reidemeister type cobordism
that is not of positive type I. When $F$ is a death cobordism or a positive Reidemeister I cobordism,
then the chain map $\Phi_F$ involves factors of $\pi$ that depend on
the $S$-map from equation~\eqref{eq:3:Smap}. However, the equation
$\mathcal{F}(\Phi_F)=\Phi_{F\cup(c\times I)}$
still holds in this case because the value of the $S$-map does not change if one adds the trivial circle $c$ to a link diagram.

In conclusion,
equation~\eqref{eqn:supernaturaldefinition} from the definition
of a $\pi$-supernatural transformation 
implies that for any link cobordism $F\subset(\mathbb{R}^2\setminus D)\times\mathbb{R}\times I$ as above,
the death map intertwines with the maps $\Phi_{F\cup(c\times I)}$ and $\Phi_F$, up to a possible overall factor of $\pi$ that depends
on the superdegree of $F$.

\subsubsection{Commuting Births with other Cobordisms}
One can check directly that the birth map is a component of an (ordinary) natural transformation
from the inclusion functor $\mathcal{C}\rightarrow\cobIIIgen$ from Remark~\ref{rem:deathpisupernatural} to the functor $\mathcal{F}$.
Repeating the argument that we just used for death maps, one thus see that
birth maps commute with maps induced by other
link cobordisms.

\subsubsection{Commuting a Pair of Saddle Cobordisms}

We will show that commuting two saddle cobordisms induces an overall sign.  Consider the odd Khovanov bracket of the larger link with two additional crossings corresponding with the two saddles.  If we fix a resolution of outside crossings, then the square corresponding with the resolution of the two distinguished vertices anticommutes:

\begin{equation}
    \Psi_{{\star}1\alpha}\Psi_{0{\star}\alpha}=-\Psi_{1{\star}\alpha}\Psi_{{\star}0\alpha}
\end{equation}

The two ways we travel around the face correspond with the rearrangement of the saddles' order. To produce the final saddle chain maps, we need to multiply by signs on odd degrees so that the maps commute with the differentials.

\begin{equation}
    \hspace{-0.5in}\includeFig{S04-4F006}
\end{equation}

This yields the following equation.

\begin{equation}
\begin{split}
    \overleftarrow{\Phi}_{1,\alpha}^{2,\alpha}\overleftarrow{\Phi}_{0,\alpha}^{1,\alpha} &=
    (-1)^{\deg(\alpha)}\Psi_{{\star}1\alpha}(-1)^{\deg(\alpha)}\Psi_{0{\star}\alpha}\\ &=-
    \left[(-1)^{\deg(\alpha)}\Psi_{1{\star}\alpha}(-1)^{\deg(\alpha)}\Psi_{{\star}0\alpha}\right]\\ &=
    -\overrightarrow{\Phi}_{1,\alpha}^{2,\alpha}\overrightarrow{\Phi}_{0,\alpha}^{1,\alpha}
\end{split}
\end{equation}

Therefore, generally for commuting two saddle cobordisms, $\overleftarrow{\Phi}=-\overrightarrow{\Phi}$.

\subsubsection{Commuting Saddles with Reidemeister Type Cobordisms}

Will show that a saddle chain map and a Reidemeister chain map commute or anticommute.
Consider the larger link diagram $L$ with an additional crossing corresponding to the saddle. 
Then look at the map on the Khovanov bracket given by performing the Reidemeister move on $L$, yielding
a new larger link diagram $L'$. Since this map is a chain map, it intertwines with the differentials in $\llbracket L\rrbracket$ and $\llbracket L'\rrbracket$,
and hence with the components of these differentials that correspond to the saddle maps.
The actual saddle maps are obtained from these components by multiplying by prefactors of the form
$(-1)^{deg(\alpha)}$ and $(-1)^{deg(\beta)}$, where $\alpha$ and $\beta$ denote
indices of vertices in (layers of) the cubes of $L$ and $L'$.
The main observation is now that the components of the Reidemeister chain map preserve the homological
degree, and thus preserve $deg(\alpha)$ up to an overall shift. In the relevant calculations, we can therefore assume that
$deg(\beta)=deg(\alpha)+c$ for an overall constant $c$. It follows that the Reidemeister chain map commutes with the
saddle chain map if $c$ is even and anticommutes otherwise.

\subsubsection{Commuting a Pair of Reidemeister Type Cobordisms}

We are left to show that changing the chronological order of a pair of distant Reidemeister type cobordisms at most incurs an overall
factor of $\pm 1$ or $\pm\pi$. Let $F$ and $G$ be two cobordisms arising from Reidemeister moves performed in disjoint disks $D_F$ and $D_G$.
Further, let $L$ be the initial link diagram for these cobordisms.
We first observe the following.

\begin{lemma}\label{lem:worm}
Let $x_F$ and $x_G$ be distinguished points on the boundaries of $D_F$ and $D_G$ away from the strands of $L$.
There exists a disk $D\subset\mathbb{R}^2$ containing both $D_F$ and $D_G$ such that $\partial D_F\cup \partial D_G$
is contained in $\partial D$ except for arbitrarily small neighborhoods around the distinguished points,
and such that the tangle $L\cap D$ differs from $L\cap(D_F\cup D_G)$ by adding finitely many extra
strands and no extra crossings.
\end{lemma}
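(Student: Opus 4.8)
The plan is to join $D_F$ and $D_G$ by a thin band --- the ``worm'' --- that runs alongside the strands of $L$, and to let $D$ be the union of $D_F$, $D_G$, and this band. First I would choose an embedded arc $\gamma\colon[0,1]\to\mathbb{R}^2$ with $\gamma(0)=x_F$ and $\gamma(1)=x_G$, whose interior lies in the open set $\mathbb{R}^2\setminus(D_F\cup D_G)$ and which meets $\partial D_F$ transversally only at $x_F$ and $\partial D_G$ transversally only at $x_G$; such an arc exists because $\mathbb{R}^2\setminus(D_F\cup D_G)$ is path-connected. After a small perturbation of $\gamma$ rel endpoints --- which may be taken to be the identity near $\gamma(0)$ and $\gamma(1)$, using that $x_F$ and $x_G$ lie away from the strands of $L$ --- I can arrange in addition that $\gamma$ meets $L$ transversally in finitely many points, none of which is a crossing of $L$ or lies on $\partial D_F\cup\partial D_G$.

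Next I would thicken $\gamma$ to a band. Choose a smooth embedding $\phi\colon[0,1]\times[-\delta,\delta]\to\mathbb{R}^2$ with $\phi(t,0)=\gamma(t)$, with $a_F:=\phi(\{0\}\times[-\delta,\delta])$ a small arc of $\partial D_F$ centered at $x_F$, with $a_G:=\phi(\{1\}\times[-\delta,\delta])$ a small arc of $\partial D_G$ centered at $x_G$, and put $B:=\phi([0,1]\times[-\delta,\delta])$. By compactness of $\gamma$ and $L$, for $\delta$ small enough the band $B$ satisfies: (i) $B\cap(D_F\cup D_G)=a_F\cup a_G$; (ii) $B$ contains no crossing of $L$; and (iii) $L\cap B$ is a disjoint union of finitely many properly embedded arcs, one for each intersection point of $\gamma$ with $L$, each running across the band from $\phi([0,1]\times\{-\delta\})$ to $\phi([0,1]\times\{\delta\})$. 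Shrinking $a_F$ and $a_G$ further if necessary, I may also assume they are disjoint from the endpoints of the tangles $L\cap D_F$ and $L\cap D_G$ on $\partial D_F$ and $\partial D_G$.

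Now set $D:=D_F\cup D_G\cup B$. This is an embedded disk: it is obtained from the disjoint disks $D_F$ and $D_G$ by attaching a band along the arcs $a_F\subset\partial D_F$ and $a_G\subset\partial D_G$, and a band joining two distinct boundary circles produces again a disk (the result is connected and orientable, has one boundary component, and Euler characteristic $1+1-1=1$). Its boundary is
\[
\partial D=\bigl(\partial D_F\setminus\operatorname{int}(a_F)\bigr)\cup\bigl(\partial D_G\setminus\operatorname{int}(a_G)\bigr)\cup\phi\bigl([0,1]\times\{-\delta,\delta\}\bigr),
\]
so $\partial D_F\cup\partial D_G$ is contained in $\partial D$ except on $a_F\cup a_G$, and the arcs $a_F$, $a_G$ are contained in arbitrarily small neighborhoods of $x_F$, $x_G$ once $\delta$ and the attaching arcs are taken small. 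Finally, $L\cap D=(L\cap D_F)\sqcup(L\cap D_G)\sqcup(L\cap B)$, and by (ii) and (iii) the set $L\cap B$ is a disjoint union of finitely many arcs running cleanly across the band with no crossing of $L$ among them; hence $L\cap D$ is obtained from $L\cap(D_F\cup D_G)$ by adjoining finitely many extra strands and no extra crossings, as required.

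The only slightly delicate point is the band-thinning step: one must verify that $\delta$ can be chosen uniformly small enough that the band acquires no crossing of $L$ and meets $L$ only in transverse across-the-band arcs. This is the standard combination of transversality with a tubular-neighborhood argument; the arc $\gamma$ is genuinely ``wormlike'' in that it may wind past many strands of $L$, but $\gamma$ and $L$ are compact, so there are only finitely many such encounters and each one contributes exactly one additional strand.
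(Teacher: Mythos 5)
Your proposal is correct and follows essentially the same route as the paper: choose an embedded arc from $x_F$ to $x_G$ that avoids the crossings of $L$ and meets $L$ transversely in finitely many points, thicken it to a narrow band, and take $D$ to be the union of $D_F$, $D_G$, and the band. Your write-up simply supplies more detail (the transversality perturbation, the Euler characteristic check that the band sum is a disk, and the small-$\delta$ argument) than the paper's brief proof.
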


\begin{proof}
    Choose a non-self-intersecting path $p\subset\mathbb{R}^2$ from $x_F$ to $x_G$ that
    avoids the crossings of $L$ and meets $D_F$ and $D_G$ only at its endpoints, and that
     intersects $L$ transversely at finitely many places. The desired disk $D$ is then given by
     $D=D_F\cup D_G\cup P$, where $P\subset\mathbb{R}^2$ is a narrow band obtained by thickening $p$.
\end{proof}

Depending on the Reidemeister move performed on each of the disks $D_F$ and $D_G$,
we will now choose the points $x_F$ and $x_G$ on the boundaries of $D_F$ and $D_G$ according to the rules of acceptable point placement shown in Figure \ref{fig:wormrules}.  

\begin{figure}[H]
        \centering
        \captionsetup{width=4.5in}
        \includeTang{S04-4F007}
        \caption{The ticked boundary indicates where the path should connect to the disk containing each Reidemeister move}
        \label{fig:wormrules}
\end{figure}

Let $D$ be the larger disk produced by applying the Lemma \ref{lem:worm} to our link diagram $L$ with the disks $D_F$ and $D_G$ and the chosen points $x_F$ and $x_G$.  We have ostensibly dug a tunnel from the disk containing the tangle $L\cap D_F$ to the disk containing the tangle $L\cap D_G$ in order to produce a single larger tangle $T:=L\cap D$.
Consider the commutator of cobordisms $H\coloneqq FGF^{-1}G^{-1}$, which begins and ends at identical diagrams,
and which is nontrivial only above $D$.
The tangle $T$ satisfies the assumptions of Lemma~\ref{lem:t2}, and thus this lemma implies that $H$
induces a chain map homotopic to $\pm\id$ or $\pm\pi\id$. 
It follows that either order of composing $F$ and $G$ produces homotopic chain maps up to a global invertible scalar.

\section{Non-invariance in $S^3\times I$}\label{s:noninvariance}
In this section, we prove Theorem~\ref{thm:noninvariance} by giving a family of examples
showing that the odd cobordism maps are not invariant under smooth 
ambient isotopies in $S^3\times I$. As mentioned in the introduction, the same examples
will also show that the odd cobordism maps are not invariant under ribbon moves.

Following \cite{Ka2022}, let
$U_2=A\sqcup B$ denote a 2-component unlink consisting of an inner component $A$ and an outer component $B$, and
let $D_1$ and $D_2$ be the disks shown below.

\begin{figure}[H]
    \centering
    \includeCob{S05F001}
    \caption{The disks $D_1$ and $D_2$ on the left and right respectively}
\end{figure}

Note that each of these disks is disjoint from $A$, and can thus be viewed as a Seifert surface for $B$
in $S^3\setminus A$. As such, the disks $D_1$ and $D_2$ are ambient isotopic because
we can transform one into the other by pushing it through the point $\infty\in S^3$.
By pushing the interior of each disk into the 4th dimension, we can further regard $D_1$
and $D_2$ as link cobordisms $\emptyset\rightarrow B$, and adding the identity
cobordism of $A$, we obtain link cobordisms $F_1,F_2\colon A\rightarrow U_2$.
These cobordisms are still ambient isotopic in $S^3\times I$, but we
claim that they induce different maps on odd Khovanov homology.

To see this, note that $F_1$ and $F_2$ are represented by the following movies:

\begin{figure}[H]
    \centering
    \includeFig{S05F002}
    \caption{The cobordisms $F_1$ and $F_2$ on the left and right respectively}
\end{figure}

The first half of each movie is given by the birth of a new component, whereas
in the second half, this component is moved over or under the other component
(so that the two components switch places). Using notation from \cite{ORS2007},
the odd Khovanov homology groups of $A$ and $U_2$ are given by
$OKh(A)=\operatorname{Span}\{1,A\}$ and $OKh(U_2)=\operatorname{Span}\{1,A,B,A\wedge B\}$.
A direct calculation now shows that the maps
\[
OKh(A)\longrightarrow OKh(U_2)
\]
induced by $F_1$ and $F_2$ are given by
\[1 \mapsto 1,\qquad A\mapsto A\pm (A-B)\]
where
the sign denoted $\pm$ is different for
$F_1$ and $F_2$, and where we are ignoring possible overall signs.
On the generator $A$, the values of the two maps thus differ by $\pm 2(A-B)\neq 0$ or by $\pm 2A\neq 0$,
depending on whether the overall signs are assumed to be the same or different.

We can easily generalize this example to obtain an infinite family of link cobordisms, as described in
Theorem~\ref{thm:noninvariance}. To this end, let $R\colon U_2\rightarrow U_2$ be the link cobordism
$R=R_2\circ R_1^{-1}$, where the cobordisms $R_1$ and $R_2$ are given by the second halves of the movies of $F_1$
and $F_2$, respectively. Then the cobordisms $F_{n}:=R^{(n-1)}\circ F_1$ are all ambient isotopic in $S^3\times I$, but
the maps that they induce on odd Khovanov homology are different since they are given by
\[1 \mapsto 1,\qquad A\mapsto A+ (3-2n)(A-B)\]
(the computations are left the reader since a conceptual interpretation of the above formulas will be given in \cite{MW2024b}).
We end this section by noting that the cobordisms $F_1$ and $F_2$ are related by a ribbon move
(this can be seen, for example, by looking at their broken surface diagrams). It thus follows that the odd cobordism
maps are not invariant under ribbon moves.

\printbibliography

\end{document}